\DeclareSymbolFontAlphabet{\mathrsfs}{rsfs}
\newtheorem{prop}{Proposition}[section]
\newtheorem{lem}[prop]{Lemma}
\newtheorem{cor}[prop]{Corollary}
\newtheorem{thm}[prop]{Theorem}
\newtheorem{conjecture}[prop]{Conjecture}
\theoremstyle{definition}
\newtheorem{rem}[prop]{Remark}
\newtheorem{rems}[prop]{Remarks}
\newcommand\Hy {\mathcal{H}}
\newcommand\de{\mathop{\mathrm{deg}_\Hy}}
\begin{document}
%\linenumbers

\title{The Berge-F\"uredi conjecture on the \\
chromatic index of hypergraphs with large hyperedges
%\footnote{\tt Berge\_Furedi-article2023-15.pdf}
}
\author{Alain Bretto}
\address{NormandieUnicaen, GREYC CNRS-UMR 6072, Caen, France}
\email{alain.bretto@info.unicaen.fr}
\date{\today}
%\footnote{Version 14}

%%
%% If there is another author uncomment and edit the following.
%%

\author{Alain Faisant}
\author{François Hennecart}
\address{
Universit\'e Jean Monnet, ICJ UMR5208, CNRS, Ecole Centrale de Lyon, INSA Lyon, Universite Claude Bernard Lyon 1,
42023 Saint-Etienne, France}
\email{faisant@univ-st-etienne.fr, francois.hennecart@univ-st-etienne.fr}

\begin{abstract}

This paper is concerned with two conjectures which are intimately related. The first is a generalization to hypergraphs of  Vizing's Theorem on the chromatic index of a graph and the second is the  well-known conjecture of Erd\H{o}s, Faber and Lov\'asz which deals with  the problem of coloring a family of cliques intersecting in at most one vertex. We are led to study a special class of uniform and linear hypergraphs for which a number of properties are established.

\end{abstract}
%\smallskip
%\noindent \textbf{Keywords.}
  %Graph coloring, hypergraph  chromatic index,  \textsc{Erd\H{o}s-Faber-Lov\' asz conjecture}.

\maketitle

\section{\bf Introduction}
\label{intro}
In this paper, we address the problem of coloring hyperedges of a hypergraph $\Hy$, in the case where $\Hy$ is loopless and linear \cite{Berge1997,Berge1989}. Two main conjectures have been proposed.
The first one (the last chronologically) is a conjecture of several authors which is a generalization of  Vizing's Theorem on the coloration of edges in a graph (cf. \cite{Viz1964}).
We postpone to Section \ref{S2} all the required definitions and notations.

\begin{thm}[\textsc{Vizing's Theorem}, 1964]
\label{theo11}
The chromatic index of any simple graph~$\Gamma$ satisfies
\[
\mathrm{q}(\Gamma)\leq \Delta(\Gamma)+1.
\]
\end{thm}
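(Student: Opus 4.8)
The plan is to proceed by induction on the number of edges of $\Gamma$, the empty graph being trivial. Set $\Delta=\Delta(\Gamma)$ and use the palette $\{1,\dots,\Delta+1\}$. Fix an edge $e=xy_{0}$; by the induction hypothesis $\Gamma-e$ has a proper edge-coloring $c$, and since every vertex is incident with at most $\Delta$ edges, at each vertex $v$ some color is \emph{missing}, i.e.\ occurs on no edge at $v$. Assuming for contradiction that $\Gamma$ itself admits no proper $(\Delta+1)$-edge-coloring, I will reach a contradiction through a controlled sequence of recolorings of $c$ that in the end also colors $e$.

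The central notion is a \emph{Vizing fan} at $x$: a sequence $y_{0},y_{1},\dots,y_{s}$ of pairwise distinct neighbors of $x$ (with $e=xy_{0}$ still uncolored) such that $c(xy_{t})$ is missing at $y_{t-1}$ for each $1\le t\le s$. Two moves are available. \emph{Fan rotation}: recoloring $xy_{t}:=c(xy_{t+1})$ for $0\le t<s$ and giving $xy_{s}$ a color $\delta$ that is missing at both $x$ and $y_{s}$ yields a proper $(\Delta+1)$-coloring of all of $\Gamma$ --- the fan property together with $\delta$ being missing at $y_{s}$ handles the vertices $y_{0},\dots,y_{s}$, and $\delta$ being missing at $x$ handles $x$. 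Under the standing assumption this is impossible, so \emph{no color is missing simultaneously at $x$ and at the last vertex of a fan}; and since every prefix of a fan is again a fan, the same holds for $y_{0},\dots,y_{j}$ for every $j\le s$. \emph{Kempe swap}: for colors $\alpha,\beta$, the edges colored $\alpha$ or $\beta$ form a subgraph of maximum degree at most $2$, hence a disjoint union of paths and cycles; interchanging $\alpha$ and $\beta$ on one component keeps $c$ proper and changes the set of colors present at a vertex only at the endpoints of that component.

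Now fix a \emph{maximal} fan $y_{0},\dots,y_{s}$, a color $\alpha$ missing at $x$, and a color $\beta$ missing at $y_{s}$. Then $\alpha\ne\beta$, and $\beta$ occurs at $x$, say $\beta=c(xy_{i})$; since $\beta$ is missing at $y_{s}$, maximality of the fan forces the neighbor $y_{i}$ to lie among $y_{1},\dots,y_{s-1}$, so that $\beta$ is also missing at $y_{i-1}$ by the fan property, while $\alpha$ --- being missing at $x$ --- must occur at every fan vertex, in particular at $y_{i-1}$ and at $y_{s}$. Hence in the $\alpha\beta$-subgraph $H$ each of $x$, $y_{i-1}$, $y_{s}$ is an endpoint of a path, and $x$ lies on exactly one such path, since $xy_{i}$ is its only $H$-edge. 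The crux is to show that the $H$-path starting at $y_{s}$, and likewise the one starting at $y_{i-1}$, ends at $x$: if the path from $y_{s}$ did not reach $x$, then interchanging $\alpha$ and $\beta$ along it would make $\alpha$ missing at $y_{s}$ while leaving $\alpha$ missing at $x$, and would preserve either the whole fan or its prefix ending at $y_{i-1}$ --- which, by the rotation move, is enough to color $\Gamma$. Granting this, the unique $H$-path through $x$ has both $y_{i-1}$ and $y_{s}$ as its other endpoint, so $y_{i-1}=y_{s}$, contradicting $i-1<s$; this contradiction completes the induction.

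The step I expect to be the real obstacle is verifying that these Kempe swaps do not wreck the fan. The point is that a swap changes a vertex's set of present colors only at the two ends of the swapped path; since $\alpha$ is missing at $x$ no edge $xy_{t}$ carries the color $\alpha$, and $xy_{i}$ is the only $\beta$-edge at $x$, so the colors on the edges at $x$ are left intact. A short case analysis --- according to whether the far end of the swapped path equals $y_{i-1}$ or not --- then shows that a legitimate fan, namely the original one or its truncation $y_{0},\dots,y_{i-1}$, survives the swap, now with a common color missing at $x$ and at its last vertex; together with the rotation move this colors $\Gamma$, the desired contradiction.
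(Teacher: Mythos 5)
The paper does not prove Theorem \ref{theo11} at all: it is quoted as Vizing's classical theorem with only the reference \cite{Viz1964}, so there is no internal argument to compare yours against. Your proposal is the standard fan-rotation/Kempe-chain proof and, as written, it is sound: the rotation move is justified correctly (the set of colors present at $x$ is unchanged by shifting, and the fan property keeps each $y_t$ proper); the deductions that $\beta=c(xy_i)$ with $1\le i\le s-1$ (by maximality and $\beta$ missing at $y_s$) and that $\alpha$ occurs at every fan vertex (via prefixes of the fan) are exactly right; and the key point you isolate --- that a swap along the $\alpha\beta$-path from $y_s$ (resp.\ $y_{i-1}$) avoiding $x$ leaves the edges at $x$ untouched, because no edge at $x$ is colored $\alpha$ and $xy_i$ is the only $\beta$-edge at $x$, and can only disturb the fan condition at position $i$, i.e.\ when the far endpoint is $y_{i-1}$ --- is precisely what makes the case analysis (whole fan survives, or the prefix ending at $y_{i-1}$ survives with $\alpha$ now missing at its last vertex) close the argument, forcing $y_{i-1}=y_s$ and the final contradiction.
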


One can wonder if this result can be generalized to hypergraphs.  Actually, this theorem suggested the following conjecture.

\begin{conjecture}[\textsc{Generalized Vizing’s Theorem}]\label{conj12}
Every linear hypergraph $\Hy$ without  loop verifies 
\begin{equation}\label{eqqH}
\mathrm{q}(\Hy)\leq \Delta([\Hy]_{2})+1,
\end{equation}
where $\Delta([\Hy]_{2})$ is the maximum degree of the $2$-section $[\Hy]_{2}$ of $\Hy$.
\end{conjecture}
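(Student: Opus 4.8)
The plan is to reformulate the statement in terms of the \emph{line (intersection) graph} $L(\Hy)$, whose vertices are the hyperedges of $\Hy$ and in which two hyperedges are adjacent precisely when they meet. Since $\Hy$ is linear and loopless, a proper edge-coloring of $\Hy$ is exactly a proper vertex-coloring of $L(\Hy)$, so $\mathrm{q}(\Hy)=\chi(L(\Hy))$ and it suffices to exhibit a proper vertex-coloring of $L(\Hy)$ with $k:=\Delta([\Hy]_2)+1$ colors. The crucial structural input is that $L(\Hy)$ is not an arbitrary graph: for each vertex $v$ of $\Hy$ the hyperedges through $v$ form a clique $K_v$ of size $\de(v)$ in $L(\Hy)$, and because $\Hy$ is linear these cliques are pairwise edge-disjoint and cover every edge of $L(\Hy)$ exactly once. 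Linearity also yields the identity $\deg_{[\Hy]_2}(v)=\sum_{E\ni v}(|E|-1)$, whence $k=1+\max_v\sum_{E\ni v}(|E|-1)$; note that this is typically far smaller than the maximum degree of $L(\Hy)$, so naive greedy or Brooks-type bounds cannot suffice and the clique-cover structure must be used in an essential way.

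Second, I would argue by induction on the number of hyperedges, equivalently by studying a minimal counterexample $\Hy$ (minimal in the number of edges) to (\ref{eqqH}). Deleting one hyperedge $E_0$ yields, by minimality, a proper $k$-coloring $c$ of $\Hy-E_0$; the task is then to recolor so that a color becomes free for $E_0$. The obstruction is that $E_0$ can meet up to $\sum_{v\in E_0}(\de(v)-1)$ other hyperedges, which may greatly exceed $k$, so no color is automatically available. I would therefore develop a Vizing-type \emph{fan} at $E_0$: at each vertex $v\in E_0$ the clique $K_v$ uses at most $\de(v)-1$ colors on the edges other than $E_0$, leaving a set $M_v$ of colors missing at $v$; a color is legal for $E_0$ iff it lies in $\bigcap_{v\in E_0}M_v$, and since $\sum_{v\in E_0}|M_v|\ge(|E_0|-1)k+1$ is large, the combinatorial heart is to force this intersection to be nonempty by local recoloring.

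Third comes the recoloring mechanism, the analogue of Kempe chains. For two colors $\alpha,\beta$, consider the subhypergraph $\Hy_{\alpha\beta}$ of edges colored $\alpha$ or $\beta$; at every vertex it meets at most one edge of each color, so locally it looks like a path. One would like to swap $\alpha$ and $\beta$ along an ``alternating component'' to move a common missing color onto all of $E_0$ at once. Here lies the \textbf{main obstacle}, and the reason the general conjecture is hard: a single hyperedge belongs to $|E|$ distinct cliques, so the alternating structure branches at every vertex and is no longer a simple path or cycle as in the graph case, and a swap that frees $\alpha$ at one vertex of $E_0$ may create a conflict at another. Controlling this branching is precisely what breaks down for small hyperedges, whereas for large hyperedges the abundance of vertices (hence of missing colors) gives enough slack to route the chains without interference, which is exactly why that case is tractable.

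Finally, to complete the argument one would combine the fan/chain analysis with the edge-disjoint clique decomposition and a discharging or counting scheme that redistributes the surplus $\sum_{v\in E_0}|M_v|-k$ across the vertices of $E_0$, forcing a common free color and contradicting the minimality of $\Hy$. As a fallback yielding an asymptotic version, a semi-random (nibble) coloring exploiting linearity colors all but an $o(1)$-fraction of the hyperedges with $(1+o(1))\Delta([\Hy]_2)$ colors, after which the residual sparse hypergraph is finished greedily; sharpening this to the exact bound $\Delta([\Hy]_2)+1$, rather than $(1+o(1))\Delta([\Hy]_2)$, is the remaining gap that the structural recoloring step is designed to close.
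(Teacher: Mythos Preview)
This statement is Conjecture~\ref{conj12} in the paper and is \emph{not} proved there in general; the paper establishes only the special cases of Theorems~\ref{theo15} and~\ref{theo16} (condition~\eqref{cond*}, respectively $\mathrm{ar}(\Hy)\ge|V|^{1/2}$). Your proposal is a programme for attacking the full conjecture, and you yourself flag its unresolved steps (``main obstacle'', ``remaining gap''), so it is not a proof either.

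Beyond the acknowledged gaps, the one quantitative claim you make is already wrong as stated. Since $|M_v|=k-(\de(v)-1)$, one has $\sum_{v\in E_0}|M_v|=|E_0|\,k-\mathrm{d}_\Hy(E_0)$, so your inequality $\sum_{v\in E_0}|M_v|\ge(|E_0|-1)k+1$ is equivalent to $\mathrm{d}_\Hy(E_0)\le k-1=\Delta([\Hy]_2)$. This can fail: for the hypergraphs of the class $\mathrsfs{H}_k$ analysed in the paper one has $\mathrm{d}_\Hy(e_0)=k^2>k^2-1=\Delta([\Hy]_2)$, and that is precisely the regime where the conjecture is delicate, so the pigeonhole step yields nothing there. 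The Kempe-chain branching obstacle you identify is genuine and unresolved, and the ``discharging or counting scheme that redistributes the surplus'' is not specified; no such scheme is known to work. The nibble fallback gives only $(1+o(1))\Delta([\Hy]_2)$ colours, which is Kahn's theorem and strictly weaker than~\eqref{eqqH}. By contrast, the paper's partial results avoid recolouring entirely: the extra hypotheses are used to \emph{choose} the deleted hyperedge $e_0$ so that $\mathrm{d}_\Hy(e_0)\le\Delta([\Hy]_2)$ is forced, after which a colour is immediately available by Lemma~\ref{lem32} and the induction closes without any fan or chain argument.
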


This conjecture was independently proposed, around 1985 by several authors, including Berge (cf. \cite{Berge1989}), F\"uredi (cf. \cite{Furedi1986} where the conjecture is proven for intersecting hypergraphs) and Meyniel (unpublished).

The second one is closely related to the first and deals on the coloring of vertices of a graph formed by a families of $n$ cliques each having $n$ vertices and intersecting in  at most one vertex.

 \begin{conjecture}[\textsc{Erd\H{o}s-Faber-Lov\'asz}]
\label{conj13}
Every graph $\Gamma$ formed by a family of $n$ cliques each having $n$ vertices and intersecting in at most one vertex has chromatic number verifying
$\chi(\Gamma)\leq  n$.
\end{conjecture}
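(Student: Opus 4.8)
The plan is to pass to the dual hypergraph and thereby reduce Conjecture~\ref{conj13} to the edge-colouring statement of Conjecture~\ref{conj12}. Given cliques $A_1,\dots,A_n$ with $|A_i|=n$ and $|A_i\cap A_j|\le 1$, let $\Hy$ be the hypergraph on the vertex set $\{1,\dots,n\}$ (one vertex per clique) whose hyperedges are the sets $S(v)=\{\,i:\ v\in A_i\,\}$ ranging over the vertices $v$ that belong to at least two of the $A_i$. Two cliques meeting in two indices would share two vertices, so $\Hy$ is linear; each $|S(v)|\ge 2$, so $\Hy$ is loopless. The colour of a shared vertex of $\Gamma$ is precisely the colour of the corresponding hyperedge of $\Hy$, and inside each clique the private vertices are then filled in with the colours not yet used there; hence a proper $n$-colouring of $\Gamma$ is the same thing as a proper $n$-edge-colouring of $\Hy$. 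Since $\Hy$ has at most $n$ vertices, every vertex of its $2$-section has degree at most $n-1$, that is $\Delta([\Hy]_2)\le n-1$; so the generalized Vizing bound \eqref{eqqH}, \emph{if established for $\Hy$}, already gives $\mathrm{q}(\Hy)\le \Delta([\Hy]_2)+1\le n$. The Erd\H{o}s--Faber--Lov\'asz conjecture thus follows from the ``bounded number of vertices'' instance of Conjecture~\ref{conj12}, and I would attack it in that guise.

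The next step is to reduce Conjecture~\ref{conj12} — for hypergraphs with few vertices — to the special class of Section~\ref{S2}. Take a counterexample $\Hy$ with $\mathrm{q}(\Hy)>\Delta([\Hy]_2)+1$, minimal first in the number of vertices and then in the number of hyperedges. Standard criticality arguments show that every hyperedge is critical for $\mathrm{q}$, that $\Hy$ has no vertex whose $2$-section degree is small (else recolour greedily around it), and that hyperedges of size $1$ or $2$ can be deleted or absorbed — a purely $2$-uniform linear hypergraph is a simple graph and is settled by Vizing's Theorem~\ref{theo11}. One is thus left with the case in which all hyperedges are large, i.e.\ exactly the uniform (or near-uniform) linear hypergraphs whose properties are developed in Section~\ref{S2}: the degree constraints, the structure of $[\Hy]_2$, and the inequalities tying together the number of hyperedges, the number of vertices, and $\Delta([\Hy]_2)$.

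On this reduced class I would colour the hyperedges greedily in a carefully chosen order. The aim is to produce a hyperedge $e$ that meets the union of the remaining hyperedges in few vertices: then $\Hy-e$ is coloured by minimality (or directly from the structure theorem), the hyperedges incident to $e$ forbid fewer than $\Delta([\Hy]_2)+1$ colours, and $e$ is coloured last. When no such hyperedge exists — because many large hyperedges are forced through a small common set of vertices — one switches to an augmenting argument: starting from a partial colouring one swaps colours along alternating chains, in the spirit of Vizing's fan, using linearity to bound how many hyperedges can pass through that vertex set and so to free a colour for $e$.

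The main obstacle is precisely this dense regime, in which a small set of vertices is covered by nearly $n$ hyperedges: there the naive greedy count only yields about $2n$ colours, and bringing it down to $n$ is the entire difficulty of the Erd\H{o}s--Faber--Lov\'asz problem. The sparse part of any $\Hy$ is harmless — the bound $\mathrm{q}(\Hy)\le\Delta([\Hy]_2)+1$ takes care of it — so all the weight of the argument falls on controlling the dense clusters, and it is for this that the structural analysis of the special uniform linear hypergraphs of Section~\ref{S2} is designed.
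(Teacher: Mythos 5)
The statement you are addressing is a conjecture: the paper does not prove Conjecture \ref{conj13}, and your proposal does not either. Your first paragraph is correct and is exactly the reduction the paper itself records: dualizing (cliques become vertices, shared vertices become hyperedges $S(v)$ with $|S(v)|\ge 2$) produces a loopless linear hypergraph $\Hy$ on $n$ vertices, linearity following from $|A_i\cap A_j|\le 1$, and since $\Delta([\Hy]_2)\le n-1$ the generalized Vizing bound \eqref{eqqH} would give $\mathrm{q}(\Hy)\le n$ and hence $\chi(\Gamma)\le n$; this is precisely the passage from Conjecture \ref{conj13} to Conjecture \ref{conj14} and the remark that Conjecture \ref{conj12} implies it. But from that point on you have a programme, not a proof: Conjecture \ref{conj12} is itself open, and the paper establishes it only under additional hypotheses, namely condition \eqref{cond*} (Theorem \ref{theo15}) and $\mathrm{ar}(\Hy)\ge |V|^{1/2}$ (Theorem \ref{theo16}), which yield the Erd\H{o}s--Faber--Lov\'asz bound only in those cases (Corollaries \ref{cor56}--\ref{cor58}).

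The genuine gap is the one you concede yourself: the dense regime where a hyperedge $e$ has every vertex of degree larger than $|e|$ and the antirank is small (the hyperedges the paper collects in $E''_{+}$ around Theorem \ref{theo54}). Your criticality reductions (Lemma \ref{lem32}-type counting, discarding small hyperedges, Vizing for the $2$-uniform part) recover only the sparse cases already covered by Theorems \ref{theo15} and \ref{theo54}, and the proposed ``swaps along alternating chains in the spirit of Vizing's fan'' is an intention with no lemma behind it --- it is exactly where the problem is hard, as the paper's Remark \ref{rm55} iii) makes explicit: the conjecture reduces to hypergraphs with $E_{-}=\varnothing$, and nothing further is proved there. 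A smaller inaccuracy: the structural class you invoke, $\mathrsfs{H}_k$, is developed in Sections \ref{S3}--\ref{S4} (not Section \ref{S2}) and only handles the boundary case $|V|=\mathrm{ar}(\Hy)^2$, not the general dense situation. So your proposal correctly rederives the known dual reduction, but it does not prove the statement, and no complete proof exists in the paper to compare it against.
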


This conjecture can be implemented in hypergraph's language by considering  cliques as hyperedges. Transposing it to the dual gives the following statement.

\begin{conjecture}[\textsc{Erd\H{o}s-Faber-Lov\'asz}]\label{conj14}
Every linear hypergraph $\Hy$ without loop and having $n$ vertices verifies
$\mathrm{q}(\Hy)\leq n$.
\end{conjecture}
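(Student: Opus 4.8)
The plan is to recast the statement as a vertex-colouring problem for the \emph{line graph} (intersection graph) $L(\Hy)$, whose vertices are the hyperedges of $\Hy$ and in which two hyperedges are joined precisely when they meet; since $\Hy$ is linear and loopless they meet in at most one vertex, and $\mathrm q(\Hy)=\chi(L(\Hy))$. I would argue by induction, letting $\Hy$ be a counterexample on $n$ vertices with the fewest hyperedges (we may assume every hyperedge has size $\ge 2$, the all-size-$2$ case being Vizing's theorem with $\Delta\le n-1$). Delete a hyperedge $e$ and colour $\Hy-e$ with $n$ colours by minimality; then $e$ fails to get a colour only if it has at least $n$ neighbours in $L(\Hy)$. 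Through each $v\in e$ there are $\de(v)-1$ other hyperedges, and linearity forces the hyperedges counted at distinct vertices of $e$ to be distinct (a hyperedge through two vertices of $e$ would meet $e$ twice), so $e$ has exactly $\sum_{v\in e}(\de(v)-1)$ neighbours. Hence in a minimal counterexample
\[
\sum_{v\in e}\bigl(\de(v)-1\bigr)\ \ge\ n\qquad\text{for every hyperedge }e,
\]
so every hyperedge lies on vertices of large total degree.

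Next I would use the de Bruijn--Erd\H{o}s counting inequality for linear hypergraphs, $\sum_{e}\binom{|e|}{2}\le\binom n2$, to dispose of the sparse regime. Fix a threshold $t$ and split the hyperedges into \emph{long} ones ($|e|\ge t$) and \emph{short} ones ($|e|<t$). The inequality bounds the number of long hyperedges by $\binom n2/\binom t2$, which is at most $n$ once $t\gtrsim\sqrt n$; in that situation one simply assigns each long hyperedge its own colour, and this already proves the conjecture for hypergraphs all of whose hyperedges are large — the content of the title. In general one reserves a sub-palette of the appropriate size for the long hyperedges and colours the short ones greedily, tracking how the budget $\sum_{v\in e}(\de(v)-1)$ is consumed: a short hyperedge has few vertices, so it can be blocked only if those few vertices already carry nearly all $n$ colours, and that possibility is again controlled against the counting inequality.

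The genuinely hard part is the remaining regime: many short hyperedges incident to high-degree vertices, the situation in which $\Hy$ resembles a subhypergraph of a projective plane or of a near-pencil. Here I would seek a structural dichotomy in the spirit of de Bruijn--Erd\H{o}s: either $\Hy$ embeds into a near-pencil on $n$ points, or into the point--line incidence structure of a projective plane $\mathrm{PG}(2,q)$ with $q^2+q+1\ge n$. In both ambient structures any two lines meet, so the line graph is a clique on at most $n$ vertices, $\mathrm q\le n$ follows at once, and passing to the subhypergraph $\Hy$ only helps. The obstacle — and the reason the conjecture is so resistant in full generality — is precisely the intermediate density range, where $\Hy$ is neither sparse enough for the counting argument of the previous paragraph nor extremal enough to inherit the de Bruijn--Erd\H{o}s structure. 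Bridging that gap, or sidestepping it by a R\"odl-nibble-type iterative colouring that leaves only a lower-order remainder to be patched, is where the main difficulty lies, and it is exactly what the hypothesis of large hyperedges allows one to avoid.
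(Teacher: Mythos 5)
The statement you were asked about is Conjecture \ref{conj14}: it is not proved in the paper (nor by your sketch); the paper only establishes special cases of it (Theorem \ref{theo15} under condition \eqref{cond*}, Theorem \ref{theo16} under $\mathrm{ar}(\Hy)\ge|V|^{1/2}$, and Corollaries \ref{cor56}--\ref{cor58}). Judged as a proof attempt, your proposal has a genuine gap that you yourself acknowledge in the last paragraph: the ``intermediate density'' regime is left entirely open, and no argument is offered there beyond the hope of a greedy/nibble patch. Your first step is sound and is in fact the paper's Lemma \ref{lem32} combined with \eqref{eqidenhyp2}: in a minimal counterexample every hyperedge $e$ is critical, hence $\sum_{x\in e}(\mathrm{deg}_\Hy(x)-1)\ge n$. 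But the two pillars you lean on afterwards do not carry the weight. First, the counting step does not dispose of the large-edge case at the boundary: with $t=\sqrt n$ the Fisher/de Bruijn--Erd\H{o}s-type bound $\sum_e\binom{|e|}{2}\le\binom n2$ only gives $|E|\le n+\sqrt n$, not $|E|\le n$ (an affine plane of order $k$ has $n=k^2$ points and $k^2+k$ lines), so ``one colour per long hyperedge'' fails exactly when $\mathrm{ar}(\Hy)=\sqrt n$. This boundary case is precisely why the paper cannot stop at the easy count of Lemma \ref{lem31}: it must use criticality to force $|V|=k^2$, $k$-uniformity and near-regularity, isolate the class $\mathrsfs{H}_k$, and then exhibit an explicit hyperedge colouring (Theorem \ref{theo36}); Sanchez-Arroyo's earlier result needed the strict inequality $\mathrm{ar}(\Hy)>\sqrt n$ for the same reason.

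Second, the structural dichotomy you invoke is not available: the de Bruijn--Erd\H{o}s theorem classifies \emph{linear spaces} (every pair of points on a line) meeting the bound with equality as near-pencils or projective planes; a general linear hypergraph covers only some pairs and need not embed into either structure, so ``either $\Hy$ embeds into a near-pencil or into $\mathrm{PG}(2,q)$'' has no justification, and even if it did, completing a partial linear space to a projective plane is itself a hard (and in general false) completion problem. The ``budget tracking'' greedy colouring of short hyperedges is stated only as an intention, with no lemma that converts the inequality $\sum_{x\in e}(\mathrm{deg}_\Hy(x)-1)\ge n$ into a contradiction when short and long hyperedges interact. In short: your criticality observation reproduces the paper's starting point, but neither the boundary case $\mathrm{ar}(\Hy)=\sqrt n$ nor the mixed regime is actually handled, so the proposal does not prove the conjecture, nor even the special cases the paper proves, without substantial new arguments of the kind carried out in Sections \ref{S3} and \ref{S4}.
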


It is easy to see that Conjecture \ref{conj12} implies Conjecture \ref{conj14} since $\Delta([\Hy]_{2})\leq n-1$ when $\Hy$ is linear.
There are very few results on these conjectures, especially on Conjecture \ref{conj12} (see \cite{Furedi1986, Berge1989}).  

In this article we solve the  Berge-F\"uredi Conjecture~\ref{conj12} whenever the antirank is bounded from below by the square root of the number of vertices.
We first provide a criterion ensuring that $\Hy$  satisfies  
the desired bound \eqref{eqqH}
and use it to show our main result in regards to Conjecture~\ref{conj12}  (cf. Theorem \ref{theo16}).

\begin{thm}\label{theo15}
Let $\Hy =(V,E)$ be a linear hypergraph such that
\begin{equation}\label{cond*}
\forall e\in E,\ \exists x\in e\text{ such that }\mathrm{deg}_{\Hy}(x)\le |e|. 
\end{equation}
Then %\eqref{eqqH} holds true.
\begin{equation}\label{eqHle}
\mathrm{q}(\Hy)\le \Delta([\Hy]_2)+1.
 \end{equation}
\end{thm}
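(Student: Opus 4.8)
The plan is to induct on the number of hyperedges, deleting one carefully chosen hyperedge and extending a colouring of the rest; equivalently, to exhibit $L(\Hy)$ (the representative/line graph of $\Hy$: one vertex per hyperedge, two of them adjacent when the hyperedges meet) as a $\Delta([\Hy]_2)$-degenerate graph, whence $\mathrm{q}(\Hy)=\chi(L(\Hy))\le\Delta([\Hy]_2)+1$ by greedy colouring. Write $k:=\Delta([\Hy]_2)$. The two routine observations feeding the induction are that condition \eqref{cond*} is inherited by the hypergraph obtained after deleting any hyperedge, and that deleting hyperedges never increases a degree in the $2$-section, so $\Delta([\Hy']_2)\le k$ for every such $\Hy'$. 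Hence it suffices to find one hyperedge $e$ of $\Hy$ meeting at most $k$ other hyperedges: deleting $e$, colouring $\Hy\setminus e$ with $\le k+1$ colours by the induction hypothesis, and assigning to $e$ one of the (at least one) colours missing among its $\le k$ neighbours completes the step; the empty hypergraph is the base case.

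The crux is to take $e$ to be a hyperedge of \emph{minimum size} $m$ (the antirank of $\Hy$), which satisfies $m\ge 2$ because $\Hy$ is loopless. Linearity supplies the two inequalities that make the estimate work. First, the number of hyperedges meeting a hyperedge $f$ is exactly $\sum_{y\in f}(\de(y)-1)$, each neighbour being counted once since it shares a single vertex with $f$. Second, for every vertex $y$ one has $\deg_{[\Hy]_2}(y)=\sum_{g\ni y}(|g|-1)$, and since every hyperedge containing $y$ has size at least $m$, this gives $\deg_{[\Hy]_2}(y)\ge (m-1)\,\de(y)$, i.e. $\de(y)\le k/(m-1)$. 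Now invoke \eqref{cond*} on $e$ to get $x\in e$ with $\de(x)\le|e|=m$, and separate this vertex from the remaining $m-1$ vertices of $e$:
\[
\sum_{y\in e}\bigl(\de(y)-1\bigr)=\bigl(\de(x)-1\bigr)+\!\!\sum_{y\in e\setminus\{x\}}\!\!\bigl(\de(y)-1\bigr)\le (m-1)+(m-1)\Bigl(\frac{k}{m-1}-1\Bigr)=k .
\]
Thus $e$ meets at most $k$ other hyperedges, which is precisely what the induction step requires.

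I expect the genuine difficulty to be spotting this choice of $e$: condition \eqref{cond*} by itself is useless on a generic hyperedge, since knowing that one vertex of $e$ has small degree says nothing about the remaining vertices, and it becomes effective only when combined with the ``linearity $\times$ antirank'' bound $\de(y)\le\deg_{[\Hy]_2}(y)/(m-1)$ — and that bound is only strong enough when $e$ has minimum size. Everything else is bookkeeping: the hereditary nature of \eqref{cond*}, the monotonicity $\Delta([\Hy']_2)\le\Delta([\Hy]_2)$ under hyperedge deletion, the identity $\mathrm{q}(\Hy)=\chi(L(\Hy))$, the greedy extension of the colouring, and the single point where looplessness enters, namely to guarantee $m-1\ge 1$ so that dividing by $m-1$ is legitimate.
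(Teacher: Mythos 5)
Your proof is correct and is essentially the paper's own argument: both induct on the number of hyperedges, delete a hyperedge of minimum size $\mathrm{ar}(\Hy)$, and combine \eqref{cond*} with the linearity identity $\mathrm{d}_\Hy(e)=\sum_{x\in e}(\mathrm{deg}_\Hy(x)-1)$ and the bound $\Delta([\Hy]_2)\ge(\mathrm{ar}(\Hy)-1)\Delta(\Hy)$ (your $\mathrm{deg}_\Hy(y)\le \Delta([\Hy]_2)/(m-1)$ is the same inequality) to show that this hyperedge meets at most $\Delta([\Hy]_2)$ others. Your greedy extension step is exactly the content of the paper's Lemma \ref{lem32}, which the paper invokes via a critical/non-critical case split instead of your degeneracy phrasing, so the difference is purely presentational.
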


Note that if $\Delta(\Hy)\le \mathrm{ar}(\Hy)$ then \eqref{cond*} is satisfied, whence $\mathrm{q}(\Hy)\le \Delta([\Hy]_2)+1$ by Theorem \ref{theo15}. Thus the validity of Conjecture  \ref{conj12} depends actually on the remaining case 
$\Delta(\Hy)>\mathrm{ar}(\Hy)$.

In  \cite{Sanchez2008} the author shows that
$\mathrm{q}(\Hy)\le |V|$ whenever $\mathrm{ar}(\Hy) > |V|^{1/2}$.
In \cite{Wang2024} it is proven that it remains true under the weaker condition
 $\mathrm{ar}(\Hy) \ge  |V|^{1/2}$
We improve this result by showing the following statement.

\begin{thm} \label{theo16}
Let $\Hy =(V,E)$ be a linear hypergraph such that 
$\mathrm{ar}(\Hy)\ge |V|^{1/2}$. %Such hypergraphs are said to be large.
\\
Then %\eqref{eqqH} holds true.
\[
\mathrm{q}(\Hy)\le \Delta([\Hy]_2)+1.
\]
\end{thm}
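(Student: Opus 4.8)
\section*{Proof proposal}

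The natural plan is a dichotomy on condition \eqref{cond*}. If \eqref{cond*} holds for $\Hy$, then Theorem \ref{theo15} applies verbatim and yields $\mathrm{q}(\Hy)\le\Delta([\Hy]_2)+1$, so there is nothing more to do. The entire point is therefore to treat the case where \eqref{cond*} fails, and the idea is to show that under this failure the hypothesis $\mathrm{ar}(\Hy)\ge|V|^{1/2}$ is so restrictive that it forces $\Delta([\Hy]_2)=|V|-1$, thereby reducing the claim to the dual Erd\H{o}s--Faber--Lov\'asz bound $\mathrm{q}(\Hy)\le|V|$ of Conjecture \ref{conj14}, which is already available under exactly this antirank condition by \cite{Wang2024}.

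So suppose \eqref{cond*} fails: there is an edge $e\in E$ with $\de(x)\ge|e|+1$ for every $x\in e$ (degrees being integers). Fix $x\in e$ and set $E_x=\{f\in E:\ x\in f,\ f\neq e\}$, so that $|E_x|=\de(x)-1\ge|e|\ge\mathrm{ar}(\Hy)\ge|V|^{1/2}$. Linearity gives that any two members of $E_x$, and any member of $E_x$ together with $e$, meet only in $x$; hence the sets $f\setminus\{x\}$ for $f\in E_x$ are pairwise disjoint subsets of $V\setminus e$, each of size $|f|-1\ge\mathrm{ar}(\Hy)-1\ge|V|^{1/2}-1$. Counting,
\[
|V\setminus e|\ \ge\ \sum_{f\in E_x}\bigl(|f|-1\bigr)\ \ge\ |E_x|\bigl(|V|^{1/2}-1\bigr)\ \ge\ |V|-|V|^{1/2},
\]
and since $|e|\ge|V|^{1/2}$ we obtain $|V|=|e|+|V\setminus e|\ge|V|$. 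Hence every inequality above is an equality: $|e|=\mathrm{ar}(\Hy)=|V|^{1/2}$, $\de(x)=|V|^{1/2}+1$, every edge through $x$ has exactly $|V|^{1/2}$ vertices, and $e$ together with the edges of $E_x$ covers $V$. By linearity the edges through $x$ then partition $V\setminus\{x\}$ into $\de(x)$ blocks of size $|V|^{1/2}-1$, so $x$ has degree $(|V|^{1/2}+1)(|V|^{1/2}-1)=|V|-1$ in the $2$-section. Consequently $\Delta([\Hy]_2)=|V|-1$, since it never exceeds $|V|-1$ for a linear hypergraph, and the desired inequality \eqref{eqHle} now reads $\mathrm{q}(\Hy)\le|V|$, which holds because $\mathrm{ar}(\Hy)\ge|V|^{1/2}$ by \cite{Wang2024} (and already by \cite{Sanchez2008} when the inequality is strict, which however is excluded here). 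This settles the remaining case.

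The counting step and the extraction of the equality conditions are routine; the one idea that does the work is the reduction itself --- observing that once \eqref{cond*} fails the antirank lower bound squeezes $\Hy$ into the extremal configuration in which $\Delta([\Hy]_2)+1$ is literally $|V|$, so that Conjecture \ref{conj12} collapses onto the already-known Conjecture \ref{conj14} in this regime. If one insisted on a self-contained argument avoiding \cite{Wang2024}, the main obstacle would be to colour directly the extremal structure uncovered above, in which every vertex of $e$ carries a pencil of $|V|^{1/2}+1$ blocks of size $|V|^{1/2}$ spanning $V$ --- essentially a near-pencil / projective-plane-type object; describing this structure completely and exhibiting a proper colouring with $|V|$ colours would be the crux of that route.
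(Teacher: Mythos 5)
Your proposal is correct, but it takes a genuinely different route from the paper in the case where \eqref{cond*} fails. The shared part is the counting at a vertex $x$ of the bad edge: the paper's Step 1 performs essentially the same computation and likewise extracts the equalities $|e_0|=\mathrm{ar}(\Hy)=|V|^{1/2}$, $\mathrm{deg}_{\Hy}(x)=|V|^{1/2}+1$ and $\mathrm{deg}_{[\Hy]_2}(x)=|V|-1$. The divergence is what happens next. You stop there, observe that the target bound then reads $\mathrm{q}(\Hy)\le|V|$, and invoke \cite{Wang2024} (needed in its $\mathrm{ar}(\Hy)\ge|V|^{1/2}$ form, since the forced equality rules out \cite{Sanchez2008}, as you correctly note). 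This is logically sound and non-circular, and it shows in effect that Theorem \ref{theo16} is equivalent to ``Theorem \ref{theo15} plus the bound of \cite{Wang2024}''. The paper instead stays self-contained: it pushes the structural analysis further (uniformity, the parallel class $S$ of $e_0$, reduction to the class $\mathrsfs{H}_k$) and constructs an explicit coloring (Theorem \ref{theo36}) with at most $1+k\lceil k/2\rceil\le k^2-1$ colors, yielding the slightly stronger conclusion $\mathrm{q}(\Hy)\le\Delta([\Hy]_2)$ in this case, together with the study of $\mathrsfs{H}_k$ that the authors present as the core of the paper. Your route buys brevity; it gives up self-containedness (the hard coloring work is outsourced to \cite{Wang2024}) and the structural by-products --- precisely the pencil-type analysis you identify as the crux of a direct argument, which the paper actually carries out.
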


%\textcolor{red}
{The special case in Theorem \ref{theo16} where $\mathrm{ar}(\Hy)=|V|^{1/2}$ highlights an attractive class, denoted $\mathrsfs{H}_k$, of linear and $k$-uniform hypergraphs which will be studied separately (see beginning of Section \ref{S4} for detailed definition). We will obtain various remarkable properties  in Theorem \ref{theo36} and Section \ref{S4} (see Theorems \ref{theo43} and \ref{theo44}) which will raise up several questions.
This is the core of this paper.}

\begin{comment}
In \cite{Faber2019} the authors obtained a condition on the size of hyperedges of a linear hypergraph $\Hy =(V,E)$ with $n$ vertices which allows to
control the size of its chromatic index $\mathrm{q}(\Hy)$. Their result implies the following statements.\\[0.2em]
-- $\exists c>0,\ \forall e\in E,\quad 3\le |e|< c n^{1/2} \implies \mathrm{q}(\Hy) \le  n$ ;
\\[0.2em]
-- More generally for any function $f(n)$ growing to infinity and any linear hypergraph $\Hy =(V,E)$ with $n$ vertices there exists $c=c(f)>0$ such that 
$$
\left(\forall e\in E,\quad \frac{n^{1/2}}{f(n)}<|e|< c n^{1/2}\right) \implies \left(\mathrm{q}(\Hy) < 2n^{1/2}f(n)\right).
$$
The latter shows that $\mathrm{q}(\Hy)$ can be significantly smaller than the number $n$ of vertices when the size of the hyperedges is in same time not too big neither not too small.
In the same vein we shall notice that if all the hyperedges have large size, namely
larger than $n^{\alpha}$ with $\alpha>1/2$, then $|E|$ is quite small, 
so is $\mathrm{q}(\Hy)$ (see §\ref{par31}).
\end{comment}

%%%%%%%%%%%%
%%%%
%%%%DEFINITIONS
%%%%
%%%%%%%%%%%%

\section{\bf Definitions, notations and general properties}
\label{S2}

Throughout this article, we shall use the notation $|S|$ for the cardinality of a finite set $S$. In what follows $\Hy =(V,E)$ denotes a hypergraph
where  $V(\Hy)=V$ is the set of vertices and $E(\Hy)=E$ is the set of hyperedges.
\begin{itemize}
%\item \textcolor{red}{The \emph{order of $\Hy$} is the cardinality $n$ of $V$. $\Hy$ is called a $n$-vertex hypergraph.}
 
\item  $\Hy$ is said to be  \emph{linear} if $|e\cap e'|\leq1$ for any pair of distinct hyperedges $e,e'\in E$.

\item  $\Hy$ is said to be \emph{$k$-uniform} if $|e|=k$ for any $e\in E$.

\item We denote by $\mathrm{ar}(\Hy)$ the \emph{antirank} of $\Hy$, namely the minimum cardinality of hyperedges of $\Hy$. We have  $\mathrm{ar}(\Hy)\ge2$ if and only if $H$ has no loop. If $\Hy$ has no hyperedge then $\mathrm{ar}(\Hy)$ is fixed to be equal to $\infty$. The \emph{rank} of $\Hy$ is defined by 
$\mathrm{r}(\Hy):=\max\{|e|,\ e\in E\}$.

\item For $x\in V$ let $\Hy (x)=\{e\in E : e\ni x\}$ denote the \emph{star centered at $x$}.

\item For $e\in E$, $\Hy \smallsetminus e$ denotes the partial hypergraph $(V,E\smallsetminus\{e\})$.

\item The \emph{degree of a vertex} $x$ in $\Hy$ is defined by $\mathrm{deg}_\Hy (x)=|\{e\in E : x\in e\}|$. The \emph{minimum degree} is denoted by $\delta(\Hy)$ and the \emph{maximum degree} by $\Delta(\Hy)$. We have
the following identity
\begin{equation}\label{eqiden}
\sum_{x\in V}\mathrm{deg}_\Hy (x)=\sum_{e\in E}|e|,
\end{equation}
which implies
\begin{equation}\label{eqiden2}
|E|\le \frac{|V|\Delta(\Hy)}{\mathrm{ar}(\Hy)}.
\end{equation}
When $\Hy$ is linear and
if $\delta(\Hy)\ge2$ and $\mathrm{ar}(\Hy)\ge2$, we have  the uniform upper bounds
\begin{equation}\label{eqdex}
\forall x\in V,\quad 
\de(x)\le \frac{|V|-1}{\mathrm{ar}(\Hy)-1},
\end{equation}
$$
\forall e\in E, \quad |e|\le \frac{|E|-1}{\delta(\Hy)-1}.
$$

\item
$\Hy$ is said \emph{$d$-regular} if $\mathrm{deg}_{\Hy}(x)=d$ for any $x\in V$.

\item
The \emph{degree of a hyperedge} $e$ in  $\Hy$ is given by
$$
\mathrm{d}_\Hy (e)=|\{a\in E\smallsetminus\{e\}:  a\cap e\ne\varnothing\}|.
$$
Note that this definition is different from that sometimes given in the literature
(see for instance \cite{BrettoHypergraph2013}). We have
the following bounds:
\begin{equation}\label{eqidenhyp}
\mathrm{d}_\Hy (e)\le |E|-1\quad \text{and}\quad
\sum_{e\in E}\mathrm{d}_\Hy (e)
\le |E|(|E|-1).
\end{equation}
Moreover when $\Hy$ is linear we can develop $\mathrm{d}_\Hy (e)$ as
$$
\mathrm{d}_\Hy (e)=\sum_{x\in e}\sum_{\substack{a\in E\smallsetminus\{e\}\\a\cap e=\{x\}}}1=
\sum_{x\in e}\sum_{\substack{a\in E\smallsetminus\{e\}\\a\ni x}}1
$$
thus
\begin{equation}\label{eqidenhyp2}
\mathrm{d}_\Hy (e)=\sum_{x\in e}(\mathrm{deg}_{\Hy}(x)-1)
\quad\text{and}\quad
\sum_{e\in E}\mathrm{d}_\Hy (e)=\sum_{x\in V}(\mathrm{deg}_{\Hy}(x)-1)\mathrm{deg}_{\Hy}(x).
\end{equation}
\item The \emph{2-section} of  $\Hy$ is the simple graph, denoted by $[\Hy]_{2}$,   whose vertices are 
 those of $\Hy$ and for which two distinct vertices form an edge if and
only if they both belong to a common hyperedge $e$ of $\Hy$.
For any $x\in V$ we have
\begin{equation}\label{eqDH2}
\mathrm{deg}_{[\Hy]_{2}}(x)=\sum_{\substack{e\in E\\e\ni x}}(|e|-1),
\end{equation}
giving 
the following lower bound on the maximum degree in $[\Hy]_{2}$
\begin{equation}\label{eqdelta}
\Delta([\Hy]_{2})\ge (\mathrm{ar}(\Hy)-1)\Delta(\Hy)
\end{equation}
and, when $\Hy$ is linear, the following  identity
\begin{equation}\label{eqdegH2}
\sum_{x\in V}\mathrm{deg}_{[\Hy]_{2}}(x)
=\sum_{e\in E}(|e|^2-|e|).
\end{equation}

\item %\textcolor{red}
{The \emph{line-graph} of $\Hy$ denoted by $\mathrm{L}(\Hy)$ 
is the simple graph whose vertices are hyperedges of $\Hy$ and
such that there is an edge between $e,e'\in E$, $e\ne e'$, if $e\cap e'\ne \varnothing$. We have $\mathrm{deg}_{\mathrm{L}(\Hy)}(e)=\mathrm{d}_\Hy (e)$
for any $e\in E$.}

\item  Let $A$ be the incidence matrix of the hypergraph $\Hy$ supposed without any isolated vertex and $A^{t}$ be its
  transpose. The hypergraph whose incidence matrix is $A^{t}$ is called the \emph{dual}
   hypergraph of $\Hy$ and denoted by $\Hy ^{*} = (V^{*}, E^{*})$. 
   Clearly $\Hy ^*$ is without isolated vertex and  $\left(\Hy ^*\right)^* = \Hy$. We may notice
that  the second identity in \eqref{eqidenhyp2} is relied to \eqref{eqdegH2} by duality. Note also that $\delta(\Hy) = \mathrm{ar}(\Hy ^*)$.

\item A \emph{$k$-coloring of hyperedges or hyperedge $k$-coloring} of a
 hypergraph $\Hy$ is the assignment of one color from the set $\{1,2,3,\dots, k\}$ to every hyperedge of $\Hy$ in such a way that no two intersecting hyperedges have the same color. 
% The smallest number of colors using to color a hypergraph $H$ is called \emph{chromatic index}, noted $\mathrm{q}(\Hy)$.
 
%\item   Let $\Hy =(V; E)$ be a  hypergraph, Let $\mathrm{q}(\Hy)\geq1$ be its  chromatic index.  The hypergraph $\Hy$ is  \emph{$\mathrm{q}(\Hy)$-critical} or critical for short, If for  any hyperedge $e\in E$ we have: ${\displaystyle \mathrm{q}(\Hy') =  \mathrm{q}(\Hy)-1}$, for $\Hy'= \Hy \setminus e$.

\item The \emph{chromatic index} of a hypergraph $\Hy$, denoted by
 $\mathrm{q}(\Hy)$, is  the least integer $k$ such that there exists a hyperedge $k$-coloring of $\Hy$. It is not  difficult to see that (cf. \cite{BrettoHypergraph2013})
\[
\delta(\Hy) \leq \Delta(\Hy)\leq \Delta_{0}(\Hy)\leq \mathrm{q}(\Hy),
\]
where $\Delta_{0}(\Hy)$ denotes the cardinality of the largest subset $S$ of $E$ such 
that $e,e'\in S\implies e\cap e'\ne\varnothing$.

\item A hyperedge $e\in E$ is said \emph{critical} in $\Hy$ if $\mathrm{q}(\Hy \smallsetminus e)=\mathrm{q}(\Hy)-1$. $\Hy$ is itself said \emph{critical} if all its hyperedges are critical.

\item Let $\Gamma$ be a simple graph.  A \emph{$k$-coloring} of the vertices of $\Gamma$ is an assignment of one color from the set $\{1,2,3,\dots,k\}$ to every vertex of the graph such that no two adjacent vertices have the same color.
The smallest $k$ such that $\Gamma$ has a $k$-coloring, denoted by 
$\chi(\Gamma)$, is called the
 \emph{chromatic number} of $\Gamma$.\\
 %\textcolor{red}
{It is known (see for instance \cite[Corollary 7.1.7, p. 235]{BFH2022}) 
that $\chi(\Gamma)\leq  \Delta(\Gamma)+1$, 
thus we have the upper bound
\begin{equation}\label{lineg}
\mathrm{q}(\Hy)=\chi(\mathrm{L}(\Hy))\le \Delta(\mathrm{L}(\Hy))+1
\le \mathrm{r}(\Hy)(\Delta(\Hy)-1)+1.
\end{equation}
Hence if $\Hy$ is $k$-uniform, that is $\mathrm{r}(\Hy)=\mathrm{ar}(\Hy)=k$, then by \eqref{eqdelta} the bound \eqref{eqHle} holds whenever $\mathrm{ar}(\Hy)\ge\Delta(\Hy)$. By Theorem \ref{theo15} and the remark that follows, the uniformity of $\Hy$ is actually not required.}

\item The \emph{strong chromatic number} of a hypergraph  $\Hy$  is
the chromatic number of its $2$-section. We denote it by  $\chi(\Hy)$ and we have  $\chi(\Hy)=\chi([\Hy]_{2})$.
Notice that the \emph{(weak) chromatic number} of a hypergraph $\Hy$ (not used in this article) is the least integer $k$
for which  there exists a vertex $k$-coloring of $\Hy$
such that  any hyperedge of $\Hy$ is not monochromatic.

\item In a graph or a hypergraph, we say that a color $i$ is \emph{incident to} a vertex $x$ if there is a (hyper-)edge with color $i$ which contains $x$.

\item Two linear hypergraphs $\Hy =(V,E)$ and $\Hy'=(V',E')$
without loop  are said to be \emph{isomorphic} if there exists a bijection $f$ from $V$ onto $V'$
such that 
$$
\{x_1,\dots,x_t\}\in E\iff \{f(x_1),\dots,f(x_t)\}\in E'.
$$
In this case it is clear that $\mathrm{q}(\Hy)=\mathrm{q}(\Hy')$. 

\item %\textcolor{red}
{$\Hy$ is said to be a \emph{finite affine plane} if there exists $k\ge2$ such that 
\begin{itemize}
\item $|V|=k^2$ and $\forall x\in V,\ \mathrm{deg}_{\Hy}(x)=k+1$;
\item $|E|=k^2+k$ and $\forall e\in E,\ |e|=k$;
\item $\forall x,y\in V$, $x\ne y$, there exists a unique hyperedge $e$ such that 
$\{x,y\}\subset e$;
\item $\forall a\in E$ and $x\in V\smallsetminus a$, there exists a unique hyperedge $e$ such that $x\in e$ and $e\cap a=\varnothing$.
\end{itemize}
Hence $\Hy$ is a linear $k$-uniform and $(k+1)$-regular hypergraph. Moreover
$E$ can be partitioned into $k+1$ sets of $k$ disjoint hyperedges. This implies  that $\mathrm{q}(\Hy)=k+1$.\\
When $k$ is a prime power, let denote by $\mathbb{F}_k$ the Galois field with cardinality $k$. By identifying lines with hyperedges the \emph{field plane} $\mathbb{F}_k^2$ is a finite affine plane and denoted by $\mathcal{A}_k$.\\
 For a same prime power $k$, it may exist several non-isomorphic finite affine planes (for instance when $k=9$) with $k^2$ vertices. When $k$ is not a prime power it is conjectured that there is no finite affine plane with $k^2$ vertices (known as the Prime Power Conjecture).}
\end{itemize}

\section{\bf Proofs and remarks}
\label{S3}

\subsection{When all hyperedges are large}\label{par31}

The following result shows that the number hyperedges in a linear hypergraph hugely depends on the antirank in a anti-proportional way. Before we notice that \eqref{eqiden2} and \eqref{eqdelta} give together 
$
|E|\le \frac{n}{k^2-k}\Delta([\Hy]_2)
$
(with the notation of the next lemma) without any restriction on $2\le k\le n$.

\begin{lem}\label{lem31}
Let $\Hy =(V,E)$ be a linear  hypergraph with $n$ vertices and $k=\mathrm{ar}(\Hy)$ be its antirank. If $k^2>n$ then
\begin{equation}\label{eq|E|}
|E|\le \frac{n(k-1)}{k^2-n},
\end{equation}
and
\begin{equation}\label{eqDELTAH}
\Delta(\Hy)\le k.
\end{equation}
\end{lem}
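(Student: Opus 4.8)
The plan is to prove the degree bound \eqref{eqDELTAH} directly from linearity, and then to deduce \eqref{eq|E|} from a second-moment (Cauchy--Schwarz) estimate on the vertex degrees together with the identities \eqref{eqidenhyp}--\eqref{eqidenhyp2}; neither uniformity of $\Hy$ nor \eqref{eqDELTAH} will be needed for \eqref{eq|E|}. Write $n=|V|$, $N=|E|$, $d_x=\mathrm{deg}_\Hy(x)$ and $W=\sum_{x}d_x=\sum_e|e|$ (using \eqref{eqiden}). One may assume $E\neq\varnothing$ (otherwise $N=0$ and there is nothing to prove), and then $2\le k\le n$, since $\Hy$ carries a hyperedge of size $k$ while $k=1$ would force $n=0$.

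For \eqref{eqDELTAH} I would fix a vertex $x$ with $d_x\ge1$: by linearity the $d_x$ hyperedges through $x$ meet pairwise exactly in $\{x\}$, so their union is $\{x\}$ together with $\sum_{e\ni x}(|e|-1)\ge d_x(k-1)$ further distinct vertices; bounding this count by $n$ gives $d_x\le\frac{n-1}{k-1}$, which is $<\frac{k^2-1}{k-1}=k+1$ because $n\le k^2-1$, so $d_x\le k$.

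For \eqref{eq|E|} the heart of the matter is the chain: by the second identity in \eqref{eqidenhyp2}, $\sum_x d_x^2=W+\sum_e\mathrm{d}_\Hy(e)$; by \eqref{eqidenhyp}, $\sum_e\mathrm{d}_\Hy(e)\le N(N-1)$; and by Cauchy--Schwarz, $\sum_x d_x^2\ge W^2/n$. Together these give
\[
W(W-n)\le nN(N-1),
\]
while $W\ge kN$ since every hyperedge has at least $k$ vertices. A short case split then finishes. If $N<n/k$, then $N(k^2-n)<\frac{n}{k}(k^2-n)\le n(k-1)$, the last inequality being equivalent to $n\ge k$. If $N\ge n/k$, then $W\ge kN\ge n$, hence $W-n\le\frac{nN(N-1)}{W}\le\frac{n(N-1)}{k}$; substituting $kN\le W$ yields $k^2N\le n(k+N-1)$, i.e.\ $N(k^2-n)\le n(k-1)$. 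In both cases, dividing by $k^2-n>0$ gives \eqref{eq|E|}.

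The one step requiring care is inserting $W\ge kN$ into $W(W-n)\le nN(N-1)$: since $t\mapsto t(t-n)$ is nondecreasing only for $t\ge n$, the substitution is justified exactly in the case $W\ge n$, which is why the argument is organized around the threshold $N\ge n/k$ (forcing $W\ge kN\ge n$), the complementary range being settled immediately. A reassuring check: for the Fano plane ($n=7$, $k=3$, $N=7$) every inequality invoked above — Cauchy--Schwarz, \eqref{eqidenhyp}, and $W\ge kN$ — holds with equality, and indeed $N=\frac{n(k-1)}{k^2-n}$.
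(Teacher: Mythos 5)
Your proof is correct and follows essentially the same route as the paper: both obtain $W(W-n)\le n|E|(|E|-1)$ from the identity $\sum_e \mathrm{d}_\Hy(e)=\sum_x \mathrm{deg}_\Hy(x)^2-W$ together with \eqref{eqidenhyp} and Cauchy--Schwarz, dispose of small $|E|$ trivially, and get \eqref{eqDELTAH} from the star-counting bound $\mathrm{deg}_\Hy(x)\le\frac{n-1}{k-1}$ (the paper's \eqref{eqdex}). The only difference is cosmetic: the paper splits at $|E|\ge n/(2k)$ and uses monotonicity of $\lambda\mapsto\lambda^2-\lambda$, while you split at $|E|\ge n/k$ and divide by $W$, which handles the same substitution issue.
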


\begin{proof}
We may assume that 
\begin{equation}\label{n2k}
|E|\ge\frac{n}{2k}
\end{equation}
since otherwise \eqref{eq|E|} is plainly satisfied whenever $k^2>n$.
From \eqref{eqidenhyp2}, Cauchy inequality and  \eqref{eqiden}, we infer
$$
\sum_{e\in E}(\mathrm{d}_\Hy (e)+|e|)=\sum_{x\in V}\mathrm{deg}_{\Hy}(x)^2\ge
\frac 1n \left(\sum_{x\in V}\mathrm{deg}_{\Hy}(x)\right)^2=
\frac 1n \left(\sum_{e\in E}|e|\right)^2
$$
Thus
$$
\sum_{e\in E}\mathrm{d}_\Hy (e)\ge n(\lambda^2-\lambda)
$$
where $\lambda=\lambda(\Hy)=\frac1n\sum_{e\in E}|e|$.
Since $\lambda\ge \frac{k|E|}{n}$, we have $\lambda\ge1/2$ by \eqref{n2k}, thus
$\sum_{e\in E}\mathrm{d}_\Hy (e)\ge k|E|(\frac{k|E|}n-1)$.
We obtain  by \eqref{eqidenhyp}
$$
|E|(|E|-1)\ge  k|E|\left(\frac{k|E|}n-1\right)
$$
giving
$$
\left(
\frac{k^2}n-1
\right)|E|\le k-1.
$$
This yields \eqref{eq|E|}.
{Bound \eqref{eqDELTAH} follows from 
\eqref{eqdex}.}
\end{proof}

As a direct consequence of \eqref{eq|E|} we get by  \eqref{eqdelta}
$$
\mathrm{q}(\Hy)\le |E|\le \frac{n}{k^2-n}\times \frac{\Delta([\Hy]_2)}{\Delta(\Hy)}
$$
whenever $k^2>n$.  If in addition $n=o(k^2)$ then $\mathrm{q}(\Hy)=o\left( \frac{\Delta([\Hy]_2)}{\Delta(\Hy)}\right)$ as $n\to\infty$
(see also~§\ref{sub34}). 
%\textcolor{red}
{Note also that \eqref{eqDELTAH} implies 
\eqref{cond*}. So  when $k^2>n$, \eqref{eqHle} will follow from Theorem \ref{theo15}. In order to deduce 
Theorem \ref{theo16} from it,  we essentially need to consider the equality case $n=k^2$ (see \S \ref{sub34}).}

\subsection{Proof of Theorem \ref{theo15}}

We first show a simple but useful lemma related to critical hyperedges.
 
\begin{lem}\label{lem32}
Let $\Hy =(V,E)$ be a hypergraph without loop and $e\in E$ be a critical hyperedge, that is 
$\mathrm{q}(\Hy \smallsetminus e)=\mathrm{q}(\Hy)-1$. Then
$\mathrm{q}(\Hy)-1\le \mathrm{d}_\Hy (e)$.  
%\textcolor{red}{Moreover if $a$ is critical in $\Hy \smallsetminus e$ then $a\cap e\ne\varnothing$.}
\end{lem}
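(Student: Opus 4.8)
The plan is to argue by contradiction, using a single recoloring step, which is the standard way criticality arguments work. Write $q:=\mathrm{q}(\Hy)$ and suppose toward a contradiction that $\mathrm{d}_\Hy(e)\le q-2$. Since $e$ is critical we have $\mathrm{q}(\Hy\smallsetminus e)=q-1$, so there exists a proper hyperedge coloring $c$ of $\Hy\smallsetminus e$ using only the colors $\{1,2,\dots,q-1\}$; this is the key object to start from.

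Next I would isolate exactly the hyperedges that can obstruct a color for $e$. By the very definition of $\mathrm{d}_\Hy(e)$, the set $N(e):=\{a\in E\smallsetminus\{e\}:a\cap e\ne\varnothing\}$ has cardinality $\mathrm{d}_\Hy(e)\le q-2$. Hence the color set $\{c(a):a\in N(e)\}$ has size at most $q-2$, which is strictly smaller than $q-1$, so by pigeonhole there is a color $i_0\in\{1,\dots,q-1\}$ not assigned by $c$ to any member of $N(e)$.

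Finally I would extend $c$ to all of $\Hy$ by putting $c(e):=i_0$ and check it is proper: any two intersecting hyperedges distinct from $e$ were already properly colored, and any hyperedge $a$ with $a\cap e\ne\varnothing$ lies in $N(e)$, so $c(a)\ne i_0=c(e)$. Thus $c$ is a proper $(q-1)$-coloring of $\Hy$, contradicting $\mathrm{q}(\Hy)=q$. Therefore $\mathrm{d}_\Hy(e)\ge q-1=\mathrm{q}(\Hy)-1$, as claimed. There is essentially no obstacle here; the only points requiring a moment's care are that criticality supplies a coloring on the \emph{exact} palette $\{1,\dots,q-1\}$ (not a larger one) and that the counting inequality $\mathrm{d}_\Hy(e)\le q-2<q-1$ leaves a free color — neither the looplessness nor any structural hypothesis on $\Hy$ is used beyond making $\mathrm{q}(\Hy)$ meaningful.
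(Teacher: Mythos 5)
Your argument is correct and is essentially identical to the paper's proof: both proceed by contradiction, take a proper $(\mathrm{q}(\Hy)-1)$-coloring of $\Hy\smallsetminus e$ supplied by criticality, and use the pigeonhole bound $\mathrm{d}_\Hy(e)\le \mathrm{q}(\Hy)-2$ to find a free color for $e$, contradicting the definition of $\mathrm{q}(\Hy)$. Nothing further is needed.
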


\begin{proof}
Assume by contradiction that $\mathrm{q}(\Hy)-2\ge \mathrm{d}_\Hy (e)$. Let $k=\mathrm{q}(\Hy)-1$ and $C=\{c_1,\dots,c_{k}\}$
be the set of colors which are necessary to color the hyperedges of $\Hy \smallsetminus e$. In order to color
$\Hy$ is remains to fix a permitted color for $e$. Since $e$ intersects $\mathrm{d}_\Hy (e)\le \mathrm{q}(\Hy)-2$
hyperedges, there exists $c_i\in C$ which is not used for coloring those hyperedges.
One then assigns the color $c_i$ to $e$  and we get $\mathrm{q}(\Hy)\le k$, a contradiction.  
\end{proof}

We continue with two easy properties in regard to Condition \eqref{cond*}.
\begin{lem}\label{lem33}
Let $\Hy =(V,E)$ be a hypergraph satisfying \eqref{cond*}. Then
\begin{equation}\label{cond**}
\forall e\in E \text{ such that }|e|=\mathrm{ar}(\Hy),\ \exists x\in e\text{ such that }\mathrm{deg}_{\Hy}(x)\le \mathrm{ar}(\Hy),
\end{equation}
and for any $e'\in E$ the partial hypergraph $\Hy'=\Hy \smallsetminus e'=(V,E\smallsetminus\{e'\})$ also satisfies~\eqref{cond*}.
\end{lem}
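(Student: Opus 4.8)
The plan is to verify both assertions directly from the definition of Condition \eqref{cond*}, using nothing beyond the fact that deleting a hyperedge leaves the cardinality of every surviving hyperedge unchanged and can only decrease vertex degrees. No subtle argument is needed; the point of the lemma is simply to package these observations so that \eqref{cond*} can be used as the inductive hypothesis in the proof of Theorem \ref{theo15}.

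For \eqref{cond**}, I would take any $e\in E$ with $|e|=\mathrm{ar}(\Hy)$ and apply \eqref{cond*} to $e$. This produces a vertex $x\in e$ with $\mathrm{deg}_{\Hy}(x)\le |e|$, and since $|e|=\mathrm{ar}(\Hy)$ by hypothesis, the same $x$ satisfies $\mathrm{deg}_{\Hy}(x)\le \mathrm{ar}(\Hy)$, which is exactly what \eqref{cond**} asserts.

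For the second assertion, fix $e'\in E$ and set $\Hy'=\Hy\smallsetminus e'=(V,E\smallsetminus\{e'\})$. I would pick an arbitrary $e\in E\smallsetminus\{e'\}$; then $e$ is still a hyperedge of $\Hy'$ and its cardinality is the same in $\Hy$ and in $\Hy'$. Applying \eqref{cond*} to $e$ in $\Hy$ gives $x\in e$ with $\mathrm{deg}_{\Hy}(x)\le |e|$. Removing the single hyperedge $e'$ cannot increase the degree of any vertex, so $\mathrm{deg}_{\Hy'}(x)\le \mathrm{deg}_{\Hy}(x)\le |e|$. Since $e$ was arbitrary in $E\smallsetminus\{e'\}$, the hypergraph $\Hy'$ satisfies \eqref{cond*}.

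There is essentially no obstacle here; the only points to state carefully are that passing from $\Hy$ to $\Hy\smallsetminus e'$ leaves the quantity $|e|$ meaning the same thing for every surviving $e$, and that it weakly decreases all vertex degrees, both of which are immediate from the definitions in Section \ref{S2}.
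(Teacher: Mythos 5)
Your argument is correct and is essentially identical to the paper's proof: \eqref{cond**} follows immediately by applying \eqref{cond*} to a hyperedge of minimum size, and heredity follows from $\mathrm{deg}_{\Hy'}(x)\le \mathrm{deg}_{\Hy}(x)$ together with the fact that surviving hyperedges keep their cardinality. No issues to report.
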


\begin{proof}
The implication \eqref{cond*}$\implies$ \eqref{cond**} is plain. 
\\
Let $e\in E\smallsetminus\{e'\}$. Since $\Hy$ satisfies \eqref{cond*}, there exists $x\in V$ such that $ \mathrm{deg}_{\Hy}(x)\le |e|$. We easily conclude since  $\mathrm{deg}_{\Hy'}(x)\le \mathrm{deg}_{\Hy}(x)$. 
\end{proof}

\begin{rem}\label{rm34}
We stress that fact that Condition \eqref{cond*} is hereditary, namely 
$$
 \Hy  \text{ satisfies \eqref{cond*}}\implies \Hy \smallsetminus e'  \text{ satisfies \eqref{cond*}},
$$
but it is no longer the case with Condition \eqref{cond**}. This hereditary property will play a central role along the induction argument  in the proof of Theorem \ref{theo15}.
\end{rem}

\begin{proof}[\textcolor{blue}{Proof of Theorem \ref{theo15}}]
We  argue by induction on the number of hyperedges. \\
If $|E|=1$, clearly $\mathrm{q}(\Hy)=1\le \Delta([\Hy]_2)+1$ and we are done.\\
Let $m\ge1$ and $\Hy =(V,E)$ with $|E|=m+1$ hyperedges. \\
Let $e_0$ be such that $|e_0|=\mathrm{ar}(\Hy)$ and $x_0\in e_0$ such that $\mathrm{deg}_{\Hy}(x_0)\le |e_0|=\mathrm{ar}(\Hy)$. Let $\Hy _0=\Hy \smallsetminus e_0$. Clearly $\Hy _0$ has $m$ hyperedges and satisfies \eqref{cond*} since
$\mathrm{deg}_{\mathcal{\Hy}_0}(x)\le \mathrm{deg}_{\Hy}(x)$. 
Therefore we may apply our induction hypothesis to $\Hy _0$. We distinguish two cases.

\begin{itemize}
\item If $\mathrm{q}(\Hy _0)=\mathrm{q}(\Hy)-1$ then by Lemma \ref{lem32} and \eqref{eqdelta} we  get
\begin{align*}
\mathrm{q}(\Hy)-1\le \mathrm{d}_\Hy (e_0)&=\sum_{\substack{x\in e_0\\x\ne x_0}}(\mathrm{deg}_{\Hy}(x)-1)
+\mathrm{deg}_{\Hy}(x_0)-1\\
&\le (|e_0|-1)\left(\max_{x\in e_0}\mathrm{deg}_{\Hy}(x)-1\right)+|e_0|-1\\
&= (\mathrm{ar}(\Hy)-1)\max_{x\in e_0}\mathrm{deg}_{\Hy}(x)\\
&\le (\mathrm{ar}(\Hy)-1)\Delta(\Hy)\\
&\le \Delta([\Hy]_2).
\end{align*}

\item Otherwise $\mathrm{q}(\Hy _0)=\mathrm{q}(\Hy)$ and by induction hypothesis we get
$$
\mathrm{q}(\Hy)=\mathrm{q}(\Hy _0)\le  \Delta([\Hy _0]_2)+1 \le \Delta([\Hy]_2)+1.
$$
\end{itemize}
This finishes the proof of Theorem \ref{theo15}.
\end{proof}

\subsection{Proof of Theorem \ref{theo16}}\label{sub34}

If $\Hy =(V,E)$ satisfies \eqref{cond*}  then by Theorem \ref{theo15} we infer the desired result. Assume the contrary, that is there exists $e_0\in E$ 
such that 
\begin{equation}\label{degHx}
\forall x\in e_0 \quad \mathrm{deg}_{\Hy}(x)\ge |e_0|+1\ge \mathrm{ar}(\Hy)+1
=k+1
\end{equation}
where 
\begin{equation}\label{k=}
k=\mathrm{ar}(\Hy).
\end{equation}
Our assumption on the antirank of $\Hy$  can be rewritten as
\begin{equation}\label{kV}
k\ge|V|^{1/2}.
\end{equation}
We shall use the notation $S=\{a\in E\,:\, a\cap e_0=\varnothing\}$ the set of hyperedges which are \emph{parallel} to $e_0$ and let $s=|S|$. Then
\begin{equation}\label{eq|E|16}
|E|=\mathrm{d}_\Hy (e_0)+1+s.
\end{equation}

\medskip
\noindent
\textit{Step 1}:
let $x\in e_0$. By \eqref{degHx}, \eqref{k=} and \eqref{kV} we have following inequalities:
\begin{multline*}
k^2\ge|V|\ge \mathrm{deg}_{[\Hy]_2}(x)+1=
\sum_{\substack{e\in E\\e\ni x}}(|e|-1)+1=
\sum_{\substack{e\in E\smallsetminus\{e_0\}\\e\ni x}}(|e|-1)+|e_0|\\
\ge 
(\mathrm{deg}_{\Hy}(x)-1)(k-1)+k\ge k(k-1)+k=k^2.
\end{multline*}
This implies that the above inequalities are all equalities. This gives
\begin{itemize}
\item $|V|=k^2$,
\item $\mathrm{deg}_{[\Hy]_2}(x)=k^2-1$,
\item $\mathrm{deg}_{\Hy}(x)=k+1$,
%\item $|e_0|=k$ and more generally $\forall e\in \Hy (x)$, $|e|=k$.
\end{itemize}
We infer
\begin{equation}\label{de0}
\mathrm{d}_\Hy (e_{0})=\sum_{x\in e_{0}}(\mathrm{deg}_{\Hy}(x)-1)= k^{2},
\end{equation}
and 
\begin{equation}\label{|E|b}
|E|\ge \mathrm{d}_\Hy (e_0)+1=k^2+1.
\end{equation}

\medskip\noindent
\textit{Step 2}: we prove that $\Hy$ is $k$-uniform and 
that  $\forall y\in V\smallsetminus e_0,\ \mathrm{deg}_{\Hy}(y)\ge k$.

\begin{itemize}
\item  $|e_0|=k$ by \eqref{degHx}, since $\mathrm{deg}_{\Hy}(x)=k+1$ for $x\in e_0$.

\item Let $a\in E\smallsetminus (S\cup\{e_0\})$. Then $a\cap e_0=\{x\}$
for some vertex $x\in e_0$.

%let $a\in E$, $a\ne e_0$.
%\begin{itemize}
%\item If $a\cap e_0\ne\varnothing$, there is one vertex $x$  such that $x\in e_{0}\cap a$.

As in Step 1, we may again write
$$
k^2-1=\mathrm{deg}_{[H]_{2}}(x)=\sum_{\substack{e\in E\\e\ni x}}
(|e|-1)=\sum_{\substack{e\in E\smallsetminus\{a\}\\e\ni x}}(|e|-1)+|a|-1
$$
thus
$$
k^2-1\ge (\mathrm{deg}_{\Hy}(x)-1)(k-1)+|a|-1=k(k-1)+|a|-1.
$$
This implies $|a|\le k$ yielding $|a|=k$ by \eqref{k=}.

\item Let $e_0=\{x_0,x_1,\dots,x_{k-1}\}$. Then for any $0\le i\le k-1$, we have $\mathrm{deg}_{\Hy}(x_i)=k+1$. We let $\Hy (x_i)=\{e_0,e_{i,1},\dots,e_{i,k}\}$ be the star centered at $x_i$, $0\le i\le k-1$. 
From above $|e_{i,j}|=k$, $1\le j\le k$, thus for any $0\le i\le k-1$, we have
$$
\left|e_0\cup\bigcup_{j=1}^k e_{i,j}\right|=
\sum_{j=1}^k(|e_{i,j}|-1)+|e_0|=k(k-1)+k=k^2=|V|,
$$
giving
\begin{equation}\label{V=}
V=e_0\cup\bigcup_{j=1}^ke_{i,j},\quad 0\le i\le k-1.
\end{equation}
This implies that any vertex $y$ in $V\smallsetminus\{e_0\}$ is adjacent to $x_i$ for any $0\le i\le k-1$. %\textcolor{red}
{Hence for such $y$ there exist distinct hyperedges $a_i$, $0\le i\le k-1$, such that both $x_i$ and $y$ are in $a_i$. This gives $\mathrm{deg}_{\Hy}(y)\ge k$.} 

\item Let $a\in S$. We see by \eqref{V=} that any vertex of $a$ is in some $e_{0,j}\smallsetminus\{x_0\}$, $1\le j\le k$. Since $\Hy$ is linear, we have $|a\cap e_{0,j}|\le1$, $1\le j\le k$. We obtain $|a|\le k$ and finally $|a|=k$  by \eqref{k=}.
\end{itemize}
We infer that $\Hy$ is $k$-uniform.

\medskip\noindent
\textit{Step 3}: let $a,b\in S$, $a\ne b$. 
 If $y\in a\cap b$, then $y$ is adjacent to 
each $x_i$, $0\le i\le k-1$ through $k$ many hyperedges. Therefore considering in addition $a$ and $b$, it follows that $y$ is contained in $k+2$ many hyperedges, a contradiction. Hence $a\cap b=\varnothing$. In other words 
two hyperedges parallel to $e_0$ are indeed \emph{parallel}.

%\item Let $a\in S$. Then by Step 2, we have $a=\{y_1,\dots,y_k\}$ with $a\cap e_i=\{y_i\}$, $1\le i\le k$.  We also get as in Step 1 that $\mathrm{deg}_{\Hy}(y_i)=k+1$ for all $1\le i\le k$ and $\mathrm{d}_\Hy (a)=k^2$. 

Since $\Hy$ is $k$-uniform we have
$k^2=|V|\ge |e_0|+\sum_{a\in S}|a|=k+ks$ hence $0\le s\le k-1$. 
By \eqref{eq|E|16} and \eqref{de0} we infer 
$$
|E|=\mathrm{d}_\Hy (e_0)+1+s= k^2+s+1.
$$

\medskip\noindent
\textit{Step 4}: %let $s=|E|-k^2-1$ where $s=|S|$ satisfies $0\le s\le k-1$. 
 the partial hypergraph $\tilde{\Hy}=(V,E\smallsetminus S)$
belongs to the class  $\mathrsfs{H}_k$ of all hypergraphs 
$\tilde{\Hy}=(\tilde{V},\tilde{E})$ satisfying
\begin{itemize}
\item  $\tilde{\Hy}$ is linear and $k$-uniform,

\item $|\tilde{V}|=k^2$,

\item $|\tilde{E}|=k^2+1$,

\item and there exists $e_0\in \tilde{E}$ such that\\
-- $\forall x\in e_0$, $\mathrm{deg}_{\tilde{\Hy}}(x)=k+1$,\\
--  $\forall x\in V\smallsetminus\{e_0\}$,  $\mathrm{deg}_{\tilde{\Hy}}(x)\ge k$.
\end{itemize}
If we have a hyperedge coloring of $\tilde{\Hy}$, then assigning
to all hyperedges in $S$ the same color already assigned to $e_0$ provides  a hyperedge coloring of 
$\Hy$. Thus $\mathrm{q}(\Hy)=\mathrm{q}(\tilde{\Hy})$. 
It remains to show that $\mathrm{q}(\tilde{\Hy})\le \Delta([\Hy]_2)+1$. 
This will follow from Theorem \ref{theo36} iii) below. Note that
it is possible to get this bound by a shorter argument but instead we make the choice  to study more widely the class  $\mathrsfs{H}_k$.
%\end{proof}

Before stating and proving Theorem \ref{theo36} we outline  \emph{geometric} properties for hypergraphs of $\mathrsfs{H}_k$.

\begin{prop}\label{pp35}
Let $k\ge2$, $\Hy =(V,E)\in\mathrsfs{H}_k$,
$e_0\in E$ satisfy \eqref{degHx} and $e\in E\smallsetminus\{e_0\}$. 
Then
 %$e\in E$ and $x\in V\smallsetminus e$. Then
\begin{enumerate}

\item We have $\mathrm{d}_\Hy (e_0)=k^2$ and for any $a\in E$, $a\cap e_0\ne\varnothing$. 
In other words there is no hyperedge parallel to $e_0$.

\item There exists $x_0\in e_0$ such that $e\cap e_0=\{x_0\}$.

\item For any $x\in e_0\smallsetminus \{x_0\}$ 
there exists a unique hyperedge $a\in \Hy (x)\smallsetminus\{e_0\}$ such that
$a\cap e=\varnothing$.

%In other words the conclusion in i) does not hold with $e=e_0$.

\item Assume $k\ge3$. It may exist  $x\in V\smallsetminus (e_0\cup e)$ and distinct hyperedges $a,a'$ 
such that $a\cap e=a'\cap e=\varnothing$ and $a\cap a'=\{x\}$.

\item It is not true in general that $a\cap e=a'\cap e=\varnothing\implies a\cap a'=\varnothing$.
\end{enumerate}
\end{prop}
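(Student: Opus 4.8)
The plan is to obtain (i)--(iii) directly from the numerical data defining $\mathrsfs{H}_k$, and to settle (iv)--(v) by exhibiting one explicit hypergraph of the class. For (i): by $k$-uniformity $|e_0|=k$, and combining \eqref{degHx}, \eqref{eqDH2} with the universal bound $\mathrm{deg}_{[\Hy]_2}(x)\le|V|-1=k^2-1$ — this is the chain of inequalities already used in Step~1 of \S\ref{sub34} — one forces $\mathrm{deg}_\Hy(x)=k+1$ for every $x\in e_0$; then \eqref{eqidenhyp2} yields $\mathrm{d}_\Hy(e_0)=\sum_{x\in e_0}(\mathrm{deg}_\Hy(x)-1)=k\cdot k=k^2$. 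As $\mathrm{d}_\Hy(e_0)$ counts the hyperedges other than $e_0$ that meet $e_0$, it is at most $|E|-1=k^2$, with equality if and only if every hyperedge meets $e_0$; equality holds, which is (i). Statement (ii) follows at once: for $e\ne e_0$ we have $e\cap e_0\ne\varnothing$ by (i) and $|e\cap e_0|\le1$ by linearity, so $e\cap e_0=\{x_0\}$ for a single $x_0\in e_0$.

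For (iii), fix $x\in e_0\smallsetminus\{x_0\}$. Since $\mathrm{deg}_\Hy(x)=k+1$ and $\Hy$ is linear and $k$-uniform, the $k+1$ hyperedges of $\Hy(x)$ pairwise intersect only in $x$, so by \eqref{eqDH2} their union has $\mathrm{deg}_{[\Hy]_2}(x)+1=(k+1)(k-1)+1=k^2=|V|$ vertices; hence $\{f\smallsetminus\{x\}:f\in\Hy(x)\}$ is a partition of $V\smallsetminus\{x\}$ into $k+1$ blocks of size $k-1$. Because $x\ne x_0$, (ii) gives $x\notin e$, so the $k$ vertices of $e$ fall into these blocks, at most one per block (two of them in the same $f\smallsetminus\{x\}$ would force $|e\cap f|\ge2$ with $f\ne e$). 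Thus $e$ meets exactly $k$ of the $k+1$ members of $\Hy(x)$ and is disjoint from exactly one of them, say $a$, and $a\ne e_0$ since $e_0\in\Hy(x)$ already meets $e$ at $x_0$. This is precisely the existence and the uniqueness claimed in (iii).

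For (iv)--(v) I would first record the structural description of $\mathrsfs{H}_k$ that (iii) suggests: writing $e_0=\{x_0,\dots,x_{k-1}\}$, a member of $\mathrsfs{H}_k$ amounts to a choice of $k$ pairwise ``transversal'' partitions $\pi_0,\dots,\pi_{k-1}$ of the $k(k-1)$-set $V\smallsetminus e_0$ into $(k-1)$-subsets (the $i$-th one being the pencil $\{f\smallsetminus\{x_i\}:f\in\Hy(x_i)\smallsetminus\{e_0\}\}$), and for $e=\{x_0\}\cup P$ with $P\in\pi_0$ the hyperedges disjoint from $e$ are exactly $\{x_j\}\cup Q_j$, $1\le j\le k-1$, where $Q_j$ is the unique block of $\pi_j$ missing $P$. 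The affine-plane members of $\mathrsfs{H}_k$ — an affine plane $\mathcal{A}_k$ with all but one line of a single parallel class deleted, $e_0$ being the surviving line — have the $Q_j$ mutually disjoint (they are the lines parallel to $e$ other than $e$), so to get (iv) I would exhibit a non-affine member: for $k=3$, take $V\smallsetminus e_0=\{1,\dots,6\}$ with the pairwise edge-disjoint perfect matchings $\pi_0=\{12,34,56\}$, $\pi_1=\{13,25,46\}$, $\pi_2=\{16,23,45\}$ of $K_6$; with $P=\{1,2\}$ one finds $Q_1=\{4,6\}$ and $Q_2=\{4,5\}$, so the distinct hyperedges $a=\{x_1\}\cup Q_1$ and $a'=\{x_2\}\cup Q_2$ are both disjoint from $e=\{x_0,1,2\}$ while $a\cap a'=\{4\}$ with $4\notin e_0\cup e$. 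This gives (iv) for $k=3$ (suitably ``twisted'' families of transversal partitions do the same for larger $k$), and the very same example, read as a failure of the implication, gives (v). The main obstacle is exactly this last point: (i)--(iii) are forced by the parameters of $\mathrsfs{H}_k$, whereas (iv)--(v) require producing a member of $\mathrsfs{H}_k$ that is genuinely not a piece of an affine plane — and the partition reformulation above is what makes constructing such a hypergraph, and checking its linearity, manageable.
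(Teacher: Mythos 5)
Your argument is correct, and for items i)--iii) it is essentially the paper's: i) is the same computation $\mathrm{d}_\Hy(e_0)=\sum_{x\in e_0}(\mathrm{deg}_\Hy(x)-1)=k^2=|E|-1$, ii) is immediate from linearity, and your proof of iii) — the star $\Hy(x)$ covers $V$, so the sets $f\smallsetminus\{x\}$, $f\in\Hy(x)$, partition $V\smallsetminus\{x\}$ and the $k$ vertices of $e$ occupy $k$ of the $k+1$ blocks, one per block — is just a cleaner packaging of the paper's injection/pigeonhole argument (it gives existence and uniqueness in one stroke, where the paper argues them separately). The real divergence is in iv)--v): the paper disposes of these by a forward reference to the explicit hypergraph $\Hy'$ built from the affine plane (the citation to ``iv) of Proposition \ref{pp42}'' is in fact a slip — the construction is the one in Theorem \ref{theo44}, or for $k=3$ the hypergraph $\Hy'_3$ of Theorem \ref{theo43}), whereas you build a witness on the spot: three pairwise edge-disjoint perfect matchings of $K_6$ as the pencils at $x_0,x_1,x_2$, which one checks directly gives a member of $\mathrsfs{H}_3$, and your computation of the two hyperedges disjoint from $e=\{x_0,1,2\}$ meeting at the vertex $4$ is correct. (Since $\widehat{\mathcal{A}_3}$ cannot exhibit iv) — there the hyperedges disjoint from $e$ are genuine parallels — your example is necessarily a copy of the paper's $\Hy'_3$, so the two routes verify the same object; yours is self-contained, the paper's reference buys the statement for every prime power $k$ via $\Hy'_k$.) Your remark that ``twisted'' transversal partitions handle larger $k$ is only a gesture, but since iv) and v) are existential claims a single example suffices, and your partition reformulation of $\mathrsfs{H}_k$ (the $k$ pencils at the vertices of $e_0$ are pairwise transversal partitions of $V\smallsetminus e_0$ into $k$ blocks of size $k-1$, and conversely) is accurate and a useful way to see what must be constructed.
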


\begin{proof}
i) By \eqref{de0} and since $|E|=k^2+1$, all hyperedges intersect $e_0$.

\smallskip\noindent
ii) follows from i).

\smallskip\noindent
iii) Existence: otherwise for any hyperedge $a\ne e_0$ and containing $x$, we have
$a\cap e=\{y_a\}$ for some vertex $y_a\in e\smallsetminus \{x_0\}$. Since $\Hy$ is linear
the mapping $a\in \Hy (x)\smallsetminus\{e_0\}\mapsto y_a\in e\smallsetminus\{x_0\}$ is injective. But $|\Hy (x)|=\mathrm{deg}_{\Hy}(x)=k+1$, hence $|e|\ge k+1$, a contradiction.\\
Uniqueness: let $a$ satisfy the desired conclusion. Then $a\cap e_0=\{x\}$
and $a\cap e=\varnothing$.
By~ii),  each of the  $k-1$ many vertices of $e\smallsetminus\{x_0\}$ lies in exactly one of the $k-1$ many hyperedges of $\Hy (x)\smallsetminus\{e_0,a\}$. This gives a mapping $y\in e\smallsetminus\{x_0\}\mapsto a_y\in 
\Hy (x)\smallsetminus\{e_0,a\}$ which is injective by linearity of 
$\Hy$. Since $|e\smallsetminus\{x_0\}|=k-1=|\Hy (x)\smallsetminus\{e_0,a\}|$ it is a bijection. It follows that for any 
$a'\in \Hy (x)\smallsetminus\{e_0,a\}$, there exists $y\in 
e\smallsetminus\{x_0\}$ such that $a'\cap e=\{y\}$.

\smallskip\noindent
iv) and v) will follow from the construction of $\Hy'$ in iv) of Proposition \ref{pp42}.
\end{proof}

\begin{thm}\label{theo36}
For any $k\ge2$ and any $\Hy \in\mathrsfs{H}_k$, we have 
\begin{enumerate}
\item $\Delta([\Hy]_2)=k^2-1$ and $\delta([\Hy]_2)=k^2-k$; 
\item  $k+1\le \mathrm{q}(\Hy)\le 1+k\lceil \frac{k}2\rceil$;
\item  $\mathrm{q}(\Hy)\le \Delta([\Hy]_2)$.
\end{enumerate}
\end{thm}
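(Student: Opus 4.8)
The plan is to pin down every vertex degree from the handshake identity, which yields (i) and the easy half of (ii) at once, then to reconstruct the incidence structure of $\Hy$ around $e_0$, and finally to build a hyperedge-coloring with $1+k\lceil k/2\rceil$ colors, from which the remaining inequality in (ii) and all of (iii) follow. Summing $|e|=k$ over $E$ gives $\sum_{e\in E}|e|=k(k^2+1)$, while $\sum_{x\in V}\mathrm{deg}_{\Hy}(x)=k(k+1)+\sum_{y\in V\smallsetminus e_0}\mathrm{deg}_{\Hy}(y)$ since $|e_0|=k$ and each vertex of $e_0$ has degree $k+1$. As $|V\smallsetminus e_0|=k^2-k$ and each such degree is $\ge k$, identity \eqref{eqiden} forces $\mathrm{deg}_{\Hy}(y)=k$ for every $y\notin e_0$. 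By $k$-uniformity and \eqref{eqDH2}, $\mathrm{deg}_{[\Hy]_2}(x)=(k-1)\mathrm{deg}_{\Hy}(x)$, hence this degree is $k^2-1$ on $e_0$ and $k^2-k$ off $e_0$; since $k^2-k<k^2-1$ for $k\ge2$, this is exactly statement (i), and the chain $\Delta(\Hy)\le\mathrm{q}(\Hy)$ with $\Delta(\Hy)=k+1$ gives the lower bound $\mathrm{q}(\Hy)\ge k+1$ of (ii).

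For the rest I would exploit the structure guaranteed by Proposition \ref{pp35}. Since no hyperedge is parallel to $e_0$ and each $e\ne e_0$ meets $e_0$ in a single vertex, writing $e_0=\{x_0,\dots,x_{k-1}\}$ the $k^2$ hyperedges other than $e_0$ partition into the pencils $P_i:=\Hy(x_i)\smallsetminus\{e_0\}$, each of size $\mathrm{deg}_{\Hy}(x_i)-1=k$ and each a clique of $\mathrm{L}(\Hy)$. Linearity shows that each $y\notin e_0$ lies in exactly one member of each pencil: its $k$ hyperedges fall into $k$ distinct pencils and there are only $k$ of them.

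The coloring rests on the observation that for $i\ne j$ the disjointness relation between $P_i$ and $P_j$ is a perfect matching $\varphi_{ij}\colon P_i\to P_j$. Indeed a fixed $a\in P_i$ has its $k-1$ vertices off $e_0$ lying in $k-1$ distinct members of $P_j$ (two of them coinciding would force two distinct hyperedges to share two vertices), so $a$ is disjoint from exactly one member of $P_j$; the symmetric count shows $\varphi_{ij}$ is a bijection. Hence $P_i\cup P_j$ can be properly colored with only $k$ colors: color $P_i$ bijectively by $\{1,\dots,k\}$ and give $b\in P_j$ the color of $\varphi_{ij}^{-1}(b)$; adjacency inside each clique is respected because the assignment is injective, and two hyperedges $a\in P_i$, $b\in P_j$ are adjacent precisely when $b\ne\varphi_{ij}(a)$, i.e.\ precisely when their colors differ. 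Pairing the $k$ pencils into $\lceil k/2\rceil$ blocks, using a fresh palette of $k$ colors on each block and one further color on $e_0$, produces a hyperedge-coloring of $\Hy$ with $1+k\lceil k/2\rceil$ colors, the upper bound in (ii). Statement (iii) then follows from the elementary inequality $1+k\lceil k/2\rceil\le k^2-1=\Delta([\Hy]_2)$, which for even $k$ reads $k^2\ge4$ and for odd $k$ reads $k^2-k-4\ge0$, both valid for $k\ge2$.

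The routine parts are the degree bookkeeping and the closing arithmetic inequality. The substantive step — and the place I expect to need care — is the claim that disjointness between two pencils is a perfect matching, together with the verification that the coloring transported along $\varphi_{ij}$ remains proper across the two pencils.
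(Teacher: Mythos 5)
Your proposal is correct and takes essentially the same route as the paper: the handshake/degree computation gives (i) and the lower bound in (ii), the perfect matching by disjointness between the pencils $\Hy(x_i)\smallsetminus\{e_0\}$ (which the paper gets from Proposition \ref{pp35} iii) plus a pigeonhole injectivity argument, and you get by a direct count) yields the same $\bigl(1+k\lceil k/2\rceil\bigr)$-coloring by pairing pencils and transporting a fresh palette along the matching, and (iii) follows by the same arithmetic comparison with $k^2-1$. The only differences are cosmetic: you pair the pencils arbitrarily rather than as consecutive even/odd stars, and you verify the closing inequality explicitly where the paper leaves it to the reader.
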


\begin{proof} 
%We write $e_0=\{x_0,x_1,\dots,x_{k-1}\}$ and for each $0\le i\le k-1$,  we let $e_{i,j}$, $1\le j\le k$, be the $k$ many other hyperedges containing $x_i$. We recall that for any $0\le i\le k$, $V\smallsetminus e_0=\bigcup_{j=1}^ke_{i,j}$.
%\\[0.3em]
i)  Let  $x_i\in e_0$, then $x_i$ is adjacent to any other vertex of $V$. Thus
$\mathrm{deg}_{[\Hy]_2}(x_i)=k^2-1$.  
If $x\in V\smallsetminus e_0$
then $x$ belongs to $k$ different hyperedges thus admits $k^2-k$ adjacent vertices. We infer  
$\Delta([\Hy]_2)=k^2-1$ and $\delta([\Hy]_2)=k^2-k$ .\\[0.3em]
ii)  We fix $c_0$ the color of $e_0$, and for each even $0\le i\le k-1$ and any $0\le j\le k$ we color $e_{i,j}$ with $c_{i,j}$. We need 
exactly 
$$
1+\sum_{\substack{0\le i\le k-1\\i\text{ even}}}k=1+k\left\lceil\frac{k}{2}\right\rceil 
$$
many different colors. Our aim is to show that they are sufficiently numerous to provide a hyperedge coloring of $\Hy$. For this it remains to color
all hyperedges $e_{i,h}$ where $1\le i\le k-1$ is odd and $1\le h\le k$.  

\medskip
We fix an odd integer $1\le i\le k-1$. 
\begin{itemize}
\item For each $1\le j\le k$ there
exists a unique $1\le h=h(j)\le k$ such that $e_{i-1,j}\cap e_{i,h}=\varnothing$: it suffices to apply iii) of Proposition \ref{pp35} with 
$e=e_{i-1,j}$ and $x=x_i$.

We thus may define the map
$$
f:
\begin{array}{ccc}
\{e\in E\smallsetminus \{e_0\}\,:\, x_{i-1}\in e\} &\longrightarrow & \{e\in E\smallsetminus \{e_0\}\,:\, x_{i}\in e\}\\
e_{i-1,j}&\longmapsto& e_{i,h(j)}
\end{array}
$$

\item $f$ is injective. Otherwise it would exist $1\le  j\ne j'\le k$ such that
$h(j)=h(j')$. Hence
$$
e_{i,h(j)}\smallsetminus\{x_{i}\}\subset
\bigsqcup_{\substack{1\le t\le k\\t\ne j,j'}}e_{i-1,t}
\smallsetminus\{x_{i-1}\}.
$$
Since $|e_{i,h(j)}\smallsetminus\{x_{i}\}|=k-1$ and 
$|(e_{i,h(j)}\smallsetminus\{x_{i}\})\cap e_{i-1,t}
\smallsetminus\{x_{i-1}\}|\le 1$ for any $1\le t\ne j,j'\le k$ we deduce from the \emph{pigeon hole principle} that $|e_{i,h(j)}\cap e_{i-1,t}|\ge2$ for some $t$, a contradiction to the
linearity of $\Hy$.

\item Thus $f$ is bijective, that is for any $1\le h\le k$ there exists $1\le j\le k$
such that $e_{i,h}\cap e_{i-1,j}=\varnothing$. This shows that hyperedge 
$e_{i,h}$ can be colored with $c_{i-1,j}$.
\end{itemize}
Statement iii) follows from i) and ii).
\end{proof}

\begin{rems}\label{rm37}
\begin{enumerate}
\item  There exist several sharp results for the chromatic index of linear
$k$-uniform and $d$-regular hypergraphs $\Hy$ yielding in particular the asymptotic upper bound
$\mathrm{q}(\Hy)\le (1+o(1))d$ when $d\to\infty$, where the implied function in $o(1)$ may depend on $k$ (see \cite[Section 2]{Kang2022} for a rich survey on this topic). However they do not apply when $d=k$. 

\item The size of the hyperedge coloring of $\Hy \in\mathrsfs{H}_k$
given in Theorem \ref{theo36} is certainly not optimal. For instance when $k=3$
we shall see in Proposition \ref{pp42} that $\mathrm{q}(\Hy)\le5$ while 
 the algorithm provided by Theorem \ref{theo36} yields $\mathrm{q}(\Hy)\le7$. It could be interesting to improve the upper bound in ii) and to give the order of magnitude of
$\max\{\mathrm{q}(\Hy),\ {\Hy \in\mathrsfs{H}_k}\}$
in terms of $k$. See Proposition \ref{pp42} for additional properties.

\end{enumerate}
\end{rems}

\section{\bf Studying the class $\mathrsfs{H}_k$}
\label{S4}

\subsection{General facts}

For any integer $k\ge2$, we denote by $\mathrsfs{H}_k$ the set  hypergraphs $\Hy =(V,E)$
such that 
\begin{itemize}
\item[(C0)] $\Hy$ is  linear and $k$-uniform,
\item[(C1)]  $|V|=k^2$,
\item[(C2)] $|E|=k^2+1$,
\end{itemize}
and there exists  a  hyperedge $e_0$ such that
\begin{itemize}
\item[(C3)] $\forall x\in e_0$, $\mathrm{deg}_{{\Hy}}(x)=k+1$,
\item[(C4)] $\forall x\in V\smallsetminus e_0$, $\mathrm{deg}_{{\Hy}}(x)\ge k$.
\end{itemize}

\medskip
We start with easy consequences of the definition.

\begin{prop} \label{pp41}
Let $\Hy =(V,E)$ satisfy Conditions (C0)-(C4).
Then
\begin{enumerate}
\item $\forall x\in V\smallsetminus e_0$, $\mathrm{deg}_{\Hy}(x)=k$,

%\item $\Hy$ does not satisfy \eqref{cond*},

%\item $\mathrm{d}_\Hy (e_0)=|E|-1=k^2$. In other words there is no parallel to $e_0$.

\item $\forall e\in E\smallsetminus\{e_0\}$, $\mathrm{d}_\Hy (e)=k^2-k+1$.
\end{enumerate}
\end{prop}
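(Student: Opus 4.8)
The plan is to pin down the full degree sequence of $\Hy$ by a double count, and then to read off $\mathrm{d}_\Hy(e)$ from the linear-hypergraph identity \eqref{eqidenhyp2}.

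For (i): I would first note that by (C0) every hyperedge has exactly $k$ vertices; in particular $|e_0|=k$, so by (C3) each of the $k$ vertices of $e_0$ has degree $k+1$. Then \eqref{eqiden} gives $\sum_{x\in V}\mathrm{deg}_{\Hy}(x)=\sum_{e\in E}|e|=k|E|=k(k^2+1)$ using (C2). Subtracting the contribution of $e_0$ and using $|V\smallsetminus e_0|=k^2-k$ from (C1), one gets
\[
\sum_{x\in V\smallsetminus e_0}\mathrm{deg}_{\Hy}(x)=k(k^2+1)-k(k+1)=k^2(k-1)=k\,|V\smallsetminus e_0|.
\]
Since every summand is at least $k$ by (C4), equality forces $\mathrm{deg}_{\Hy}(x)=k$ for all $x\in V\smallsetminus e_0$, which is (i).

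For (ii): I would fix $e\in E\smallsetminus\{e_0\}$. Because $\Hy$ is linear, \eqref{eqidenhyp2} applied to $e_0$ gives $\mathrm{d}_\Hy(e_0)=\sum_{x\in e_0}(\mathrm{deg}_{\Hy}(x)-1)=k\cdot k=k^2=|E|-1$, so $e_0$ meets every other hyperedge; in particular $e\cap e_0\neq\varnothing$, and by linearity $e\cap e_0=\{x_0\}$ for a single vertex $x_0$ (this is also the content of Proposition \ref{pp35}(i)--(ii)). Then $\mathrm{deg}_{\Hy}(x_0)=k+1$ by (C3), while the remaining $k-1$ vertices of $e$ lie in $V\smallsetminus e_0$ and hence have degree $k$ by (i). Applying \eqref{eqidenhyp2} to $e$,
\[
\mathrm{d}_\Hy(e)=\sum_{x\in e}(\mathrm{deg}_{\Hy}(x)-1)=(k+1-1)+(k-1)(k-1)=k+(k-1)^2=k^2-k+1,
\]
as claimed.

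The whole argument is a direct double count — no induction, no case analysis — so I do not anticipate a genuine obstacle. The only things to be careful about are that $|e_0|=k$ must be invoked from $k$-uniformity (it is not part of (C3)) and that linearity is exactly what validates \eqref{eqidenhyp2} and what forces $|e\cap e_0|=1$; granting those, (i) and (ii) are bookkeeping consequences of (C0)--(C4).
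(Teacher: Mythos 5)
Your proof is correct and follows essentially the same route as the paper: part (i) is the same double count via \eqref{eqiden}, (C2), (C3), (C4) (the paper phrases it as an inequality chain, you as a subtraction with "equality forces each term to be $k$"), and part (ii) is the same application of \eqref{eqidenhyp2} to $e$, with your explicit derivation of $|e\cap e_0|=1$ from $\mathrm{d}_\Hy(e_0)=k^2=|E|-1$ being exactly the argument the paper delegates to Proposition \ref{pp35} i)--ii). No gaps.
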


\begin{proof}
%\begin{itemize}
 i) By \eqref{eqiden} and (C2), we have $\sum_{x\in V}\mathrm{deg}_{\Hy}(x)=(k^2+1)k$, hence by (C3) and (C4)
\begin{align*}
k^3+k=(k+1)k+(k^2-k)k&\le (k+1)k+(k^2-k)\min_{x\in V\smallsetminus e_0}\mathrm{deg}_{\Hy}(x)
\\&\le 
(k+1)k+\sum_{x\in V\smallsetminus e_0}\mathrm{deg}_{\Hy}(x)=(k^2+1)k,
\end{align*}
since there are $(k^2-k)$ vertices not belonging to  $e_0$.
It follows that $\forall x\in V\smallsetminus e_0$, $\mathrm{deg}_{\Hy}(x)=k$.

\medskip
\noindent
ii)
For any $e\in E\smallsetminus\{e_0\}$,  we get from from the first equation in \eqref{eqidenhyp2}, (C3) and i)
$$
\mathrm{d}_\Hy (e)=\sum_{x\in e\cap e_0}k+\sum_{x\in e\smallsetminus e_0}(k-1)=k+(k-1)^2=k^2-k+1
$$
since $|e\cap e_0|=1$ and $|e\smallsetminus e_0|=k-1$.
%\end{itemize}
\end{proof}
 
\begin{prop}\label{pp42}
For any prime power $k$ there exists a hypergraph denoted by 
$\widehat{\mathcal{A}_k}$  in the class $\mathrsfs{H}_k$ such that
\begin{enumerate}
\item  $\mathrm{q}(\widehat{\mathcal{A}_k})=k+1$,
\item $\widehat{\mathcal{A}_k}$ is not maximal
 (for inclusion) in the class of all linear and $k$-uniform
 hypergraphs with $k^2$ vertices,
\item $\widehat{\mathcal{A}_k}$ contains only one critical hyperedge,
namely the unique hyperedge $e_0$ satisfying (C3) and (C4).
\end{enumerate}
\end{prop}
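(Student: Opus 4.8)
The plan is to realize $\widehat{\mathcal{A}_k}$ explicitly as the field plane $\mathcal{A}_k$ with a carefully chosen set of $k-1$ lines removed. Fix a line $e_0$ of $\mathcal{A}_k$ and let $P$ be its parallel class, so $|P|=k$ and $P\smallsetminus\{e_0\}$ consists precisely of the $k-1$ lines of $\mathcal{A}_k$ disjoint from $e_0$. Define $\widehat{\mathcal{A}_k}=(V,E)$, where $V$ is the point set of $\mathcal{A}_k$ and $E$ is the set of all lines of $\mathcal{A}_k$ except those in $P\smallsetminus\{e_0\}$. The first step is to check $\widehat{\mathcal{A}_k}\in\mathrsfs{H}_k$: conditions (C0) and (C1) are inherited from $\mathcal{A}_k$, and $|E|=(k^2+k)-(k-1)=k^2+1$ gives (C2); no line through a point of $e_0$ has been removed (a removed line is parallel to, hence disjoint from, $e_0$), so (C3) holds, while each point $y\notin e_0$ lies on exactly one line of $P$, necessarily one of the removed ones, so $\mathrm{deg}_{\widehat{\mathcal{A}_k}}(y)=k$, which is (C4) (and matches Proposition \ref{pp41} i)).

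For item i) I would use that the $k+1$ parallel classes of $\mathcal{A}_k$ give a proper hyperedge $(k+1)$-coloring of $\mathcal{A}_k$, whose restriction to the smaller edge set $E$ is still proper; hence $\mathrm{q}(\widehat{\mathcal{A}_k})\le k+1$, and the reverse inequality is Theorem \ref{theo36} ii) (or simply $\Delta_0(\widehat{\mathcal{A}_k})\ge \mathrm{deg}_{\widehat{\mathcal{A}_k}}(x)=k+1$ for $x\in e_0$). Item ii) is then immediate: $\widehat{\mathcal{A}_k}$ is obtained from $\mathcal{A}_k$ by deleting $k-1\ge1$ hyperedges, and $\mathcal{A}_k$ is itself linear, $k$-uniform and has $k^2$ vertices, so $\widehat{\mathcal{A}_k}$ sits strictly inside a member of the ambient class and cannot be maximal for inclusion.

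For item iii), I would first note that $e_0$ is the unique hyperedge satisfying (C3)--(C4): the vertices of degree $k+1$ in $\widehat{\mathcal{A}_k}$ are exactly the points of $e_0$, and since two distinct lines meet in at most one point, $e_0$ is the only hyperedge all of whose vertices lie on $e_0$. Next, $e_0$ is critical: $\widehat{\mathcal{A}_k}\smallsetminus e_0$ is $\mathcal{A}_k$ with the whole class $P$ deleted, hence $k$-uniform and $k$-regular, and its $k$ surviving parallel classes give a proper $k$-coloring, so $\mathrm{q}(\widehat{\mathcal{A}_k}\smallsetminus e_0)\le k$; equality holds because $k$ lines pass through each point, giving $\Delta_0\ge k$, whence $\mathrm{q}(\widehat{\mathcal{A}_k}\smallsetminus e_0)=k=\mathrm{q}(\widehat{\mathcal{A}_k})-1$. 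Finally, no $e\in E\smallsetminus\{e_0\}$ is critical: such an $e$ meets $e_0$ in a single point $x_0$ (Proposition \ref{pp35} i)--ii)), so in $\widehat{\mathcal{A}_k}\smallsetminus e$ every point of $e_0\smallsetminus\{x_0\}$---and there is at least one, as $k\ge2$---still has degree $k+1$; hence $\Delta_0(\widehat{\mathcal{A}_k}\smallsetminus e)\ge k+1$, and since deleting an edge cannot raise the chromatic index this forces $\mathrm{q}(\widehat{\mathcal{A}_k}\smallsetminus e)=k+1=\mathrm{q}(\widehat{\mathcal{A}_k})$, so $e$ is not critical.

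The construction and the chromatic-index bounds are essentially free once the structure of the affine plane is in hand; I expect the only point requiring real care to be the last one, namely verifying that deleting a single line $e\ne e_0$ still leaves a vertex of full degree $k+1$ and that this prevents $\mathrm{q}$ from dropping --- this is exactly where the hypothesis $k\ge2$ (automatic for a prime power) is genuinely used.
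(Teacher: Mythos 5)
Your proposal is correct and follows essentially the same route as the paper: the same construction (delete the $k-1$ lines of one parallel class, keeping $e_0$), the same coloring by parallel classes for $\mathrm{q}(\widehat{\mathcal{A}_k})=k+1$, non-maximality from the construction, and criticality decided by whether a vertex of degree $k+1$ survives the deletion. You merely spell out the verifications (conditions (C0)--(C4), the lower bounds via $\Delta_0$, and uniqueness of $e_0$) that the paper leaves implicit.
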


\begin{proof}
The field plane $\mathcal{A}_k=\mathbb{F}_k^2$ over $\mathbb{F}_k$ provides a $(k+1)$-regular and $k$-uniform linear hypergraph with $k^2$ vertices when considering the $k^2+k$ lines as its hyperedges.
By removing any $k-1$ non intersecting hyperedges (namely parallel lines) from $\mathcal{A}_k$ we obtain
a hypergraph $\widehat{\mathcal{A}_k}$ of $\mathrsfs{H}_k$. \\[0.3em]
i) Lines of $\mathcal{A}_k$ can be partitioned according to their slope. For each slope, any point
appears only once on some line of $\mathcal{A}_k$. This shows that all the lines having a common
slope can be colored with the same color.
Since there are $k+1$ distinct slopes in $\mathcal{A}_k$, including the vertical one, we can conclude.\\[0.3em]
ii) This follows from the construction of $\widehat{\mathcal{A}_k}$.\\[0.3em]
iii) The unique line without any parallel line in $\widehat{\mathcal{A}_k}$  is
obviously critical, while removing any other line does not decrease the chromatic index. 
\end{proof}

\subsection{The cases $k=2,3$}

\begin{thm}\label{theo43}
We have:
\begin{enumerate}
\item $\mathrsfs{H}_2$ contains just the graph $\widehat{\mathcal{A}_2}$ up to isomorphism.
It satisfies $\mathrm{q}(\widehat{\mathcal{A}_2})=3$ and $\mathrm{Aut}(\widehat{\mathcal{A}_2})\simeq\mathfrak{S}_2\times \mathfrak{S}_2$, the Klein four-group.

\item $\mathrsfs{H}_3$ possesses exactly two non isomorphic elements $\widehat{\mathcal{A}_3}$ and $\Hy'_3$ which 
satisfy $\mathrm{q}(\widehat{\mathcal{A}_3})=4$ and $\mathrm{q}(\Hy'_3)=5$.

\item 
$\mathrm{Aut}(\widehat{\mathcal{A}_3})=\mathfrak{S}_3 \rtimes \mathfrak{S}_3$ and $\mathrm{Aut}(\Hy'_3)\simeq\mathfrak{S}_2 \times \mathfrak{S}_3=D_6$, the dihedral group of degree $6$ and order $12$.

\end{enumerate}
\end{thm}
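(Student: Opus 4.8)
The plan is to treat the three items by a mix of exhaustive structural analysis (for the classification) and explicit coloring/automorphism computations. For item (i), I would start from the definition of $\mathrsfs{H}_2$: a linear $2$-uniform hypergraph is just a simple graph, with $|V|=4$, $|E|=5$, and a distinguished edge $e_0=\{x_0,x_1\}$ with $\deg(x_0)=\deg(x_1)=3$ and the other two vertices of degree $\ge 2$ (hence exactly $2$ by Proposition \ref{pp41}(i)). Since $K_4$ has $6$ edges, such a graph is $K_4$ minus one edge; the missing edge must be disjoint from $e_0$ (otherwise some vertex of $e_0$ would have degree $<3$), so $\mathrsfs{H}_2=\{K_4\smallsetminus e\}$ up to isomorphism, which is exactly $\widehat{\mathcal{A}_2}$ by Proposition \ref{pp42}. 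Then $\mathrm{q}=\chi(\mathrm{L})$ of this graph: its line graph is easily seen to need $3$ colors and no more (or invoke Proposition \ref{pp42}(i) with $k=2$), and $\mathrm{Aut}(K_4\smallsetminus e)$ is the group of permutations fixing the pair $\{$endpoints of the missing edge$\}$ setwise and the pair $\{x_0,x_1\}$ setwise, i.e. $\mathfrak{S}_2\times\mathfrak{S}_2$.

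For item (ii), the main work is the classification of $\mathrsfs{H}_3$: here $|V|=9$, $|E|=10$, $e_0$ has $3$ vertices each of degree $4$, and by Proposition \ref{pp41} every other vertex has degree $3$ and every hyperedge $\ne e_0$ has $\mathrm{d}_\Hy=7$. By Proposition \ref{pp35}(i) every hyperedge meets $e_0$, so writing $e_0=\{x_0,x_1,x_2\}$ the other $9$ hyperedges split into three groups of three through $x_0,x_1,x_2$ respectively, and each group of three (together with $e_0$) partitions $V$ by \eqref{V=}. This reduces the classification to understanding how the three "pencils" through $x_0,x_1,x_2$ interlock on the $6$ off-$e_0$ vertices; using linearity and the degree-$3$ condition I would show there are exactly two ways up to isomorphism, one being the affine plane construction $\widehat{\mathcal{A}_3}=\mathcal{A}_3$ minus a parallel class (a $2$-regular-minus configuration that is highly symmetric), the other the "exceptional" $\Hy'_3$. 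Then I would exhibit an explicit $4$-coloring of $\widehat{\mathcal{A}_3}$ (Proposition \ref{pp42}(i)) together with the lower bound $\mathrm{q}\ge k+1=4$ from Theorem \ref{theo36}(ii), and for $\Hy'_3$ produce an explicit $5$-coloring and argue $4$ colors are impossible — the latter being a finite case check, e.g. by showing $\mathrm{L}(\Hy'_3)$ contains a subgraph forcing a $5$th color, or that any attempted $4$-coloring propagates to a contradiction.

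For item (iii), I would compute the automorphism groups of the two hypergraphs in $\mathrsfs{H}_3$ directly from the incidence structure found in (ii). Any automorphism must fix $e_0$ (it is the unique hyperedge all of whose vertices have degree $4$, or the unique critical one by Proposition \ref{pp42}(iii)), hence permutes $\{x_0,x_1,x_2\}$ and correspondingly permutes the three pencils. For $\widehat{\mathcal{A}_3}$ this is the affine-geometry symmetry: the stabilizer of a parallel class in $\mathrm{AGL}(2,3)$ acting appropriately gives the wreath-type group $\mathfrak{S}_3\rtimes\mathfrak{S}_3$ (base group permuting the three lines within each of the three pencils, top group permuting the pencils), and I would check this action is faithful and exhausts all automorphisms by a counting/orbit argument. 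For $\Hy'_3$ the extra rigidity of the exceptional gluing cuts the base group down, leaving $\mathfrak{S}_2\times\mathfrak{S}_3$; I would pin this down by determining exactly which permutations of the off-$e_0$ vertices extend to hypergraph automorphisms.

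The main obstacle I expect is the classification step in (ii): proving that the interlocking of the three pencils on the six off-$e_0$ vertices admits \emph{exactly} two isomorphism types requires a careful but finite case analysis (essentially classifying a pair of partitions of a $6$-set into three pairs subject to the linearity constraint, modulo the relevant symmetry), and then correctly identifying the symmetric one with $\widehat{\mathcal{A}_3}$. The coloring lower bound $\mathrm{q}(\Hy'_3)\ge5$ is a secondary obstacle, since it is a negative statement about a specific $10$-hyperedge configuration and the cleanest route is likely an ad hoc argument using Proposition \ref{pp35}(iii)–(v) rather than brute force.
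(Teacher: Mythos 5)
Your plan is correct and, at bottom, follows the same strategy as the paper: a finite classification for $k=2,3$, explicit colorings for the upper bounds, a small ad hoc argument for $\mathrm{q}(\Hy'_3)\ge5$, and automorphism groups computed through the action on $e_0$ (which every automorphism preserves, being the set of degree-$(k+1)$ vertices). The one genuine difference is the organization of the $k=3$ classification: the paper coordinatizes $V=\mathbb{F}_3^2$ at the outset and normalizes the hyperedges step by step (using the permutability of $3,6$; $4,8$; $5,7$ and a final affine transformation) until only the two lists $E$ and $E'$ survive, whereas you reduce to the interlocking of the three pencils through $x_0,x_1,x_2$, i.e.\ to three pairwise edge-disjoint perfect matchings of the six off-$e_0$ vertices; their union is a cubic graph on six vertices, hence $K_{3,3}$ or the prism, giving $\widehat{\mathcal{A}_3}$ and $\Hy'_3$ respectively. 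Your route is arguably more structural and explains \emph{why} there are exactly two classes, at the cost of having to check that different proper $3$-edge-colorings of the same union graph, and different labellings of the pencils, do not produce further isomorphism types; the paper's normalization buys explicit vertex lists that it then reuses verbatim for the colorings and the automorphism computations. Two cautions on the parts you defer: the impossibility of a $4$-coloring of $\Hy'_3$ and the exhaustiveness of the automorphism groups are exactly where the content lies, so they must actually be carried out (the paper does the former by a short WLOG partial-coloring contradiction and the latter by the restriction homomorphism $g:\mathrm{Aut}(\Hy)\to\mathfrak{S}_3$, computing $\ker g$ and a complement); and your parenthetical description of $\mathfrak{S}_3\rtimes\mathfrak{S}_3$ should not be read as allowing independent permutations of the lines within each pencil --- that would give a group of order far exceeding $36$; the kernel of the action on $e_0$ is a single $\mathfrak{S}_3$ whose elements permute the lines of the three pencils in a correlated way.
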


\begin{proof}

i) For $k=2$, we see easily that any hypergraph $\Hy$ in $\mathrsfs{H}_2$ is obtained by removing a single edge from the complete simple graph on four vertices $K_4$, so that $\Hy =\widehat{\mathcal{A}_2}$ and $\mathrm{q}(\Hy)=3$ by Proposition \ref{pp42}. 
Clearly any automorphism of $\Hy$ is allowed to permute both vertices of 
degree 2 in one side, and independently both vertices of degree $3$ on the other. Hence the result. 
\\[0.5em]
ii) Let $(V,E)\in \mathrsfs{H}_3$. We suppose that
$V=\mathbb{F}_3^2$ where we identify the vertex $(x,y)$ with
the integer $3x+y$ whose development in base $3$ is $xy$. 
We may assume that the \emph{special} hyperedge $e_0$ is $\{0,1,2\}$ simply denoted
$012$. The 3 other hyperedges containing $0$ can be set to $036$, $048$ and 
$057$ with the same short-hand notation. Note that at this stage, the vertices $3,6$ can be permuted, likewise for $4,8$ and for $5,7$. 
These 4 hyperedges are lines in the field plane $\mathbb{F}_3^2$. Now the 
3 hyperedges containing $1$ and different from $e_0$ are
$$
13u, 16v, 1xw \quad\text{where $\{u,v,w,x\}=\{4,5,7,8\}$.} 
$$
We must have $x=4$ or $8$ since $\{w,x\}\ne \{5,7\}$ by the facts that hyperedge $057$ is already defined and $\Hy$ is linear. Switching if necessary $4$ and $8$,
we may assume that $x=4$. This implies $w\in\{5,7\}$. Switching if necessary $5$ and $7$ we may set $w=7$. Since $3$ and $6$ are also permutable
we may set $u=8$ and $v=5$. Up to isomorphism, hyperedges
$$
012, 036, 048, 057, 147, 138, 156
$$
are in $E$. We may observe on the following figure that they are lines in $\mathbb{F}_3^2$.

\begin{center}
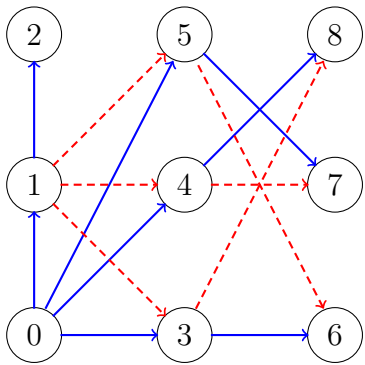
\begin{figure}[h!]
\begin{tikzpicture}

%\draw [very thin, gray,scale=2] (0,0) grid [step=1] (2,2);
%\draw[thick] (0,0) -- (3,0);

\draw (0,0) node[circle,draw] {$0$};
\draw (0,2) node[circle,draw] {$1$};
\draw (0,4) node[circle,draw] {$2$};
\draw (2,0) node[circle,draw] {$3$};
\draw (2,2) node[circle,draw] {$4$};
\draw (2,4) node[circle,draw] {$5$};
\draw (4,0) node[circle,draw] {$6$};
\draw (4,2) node[circle,draw] {$7$};
\draw (4,4) node[circle,draw] {$8$};

\draw [thick,blue,->] (0,0.35) -- (0,1.65);
\draw [thick,blue,->] (0,2.35) -- (0,3.65);
\draw [thick,blue,->] (0.35,0) -- (1.65,0);
\draw [thick,blue,->] (2.35,0) -- (3.65,0);
\draw [thick,blue,->] (0.25,0.25) -- (1.75,1.75);
\draw [thick,blue,->] (2.25,2.25) -- (3.75,3.75);
\draw [thick,blue,->] (0.15,0.35) -- (1.85,3.65);
\draw [thick,blue,->] (2.25,3.75) -- (3.75,2.25);

\draw [thick,red,densely dashed,->] (0.35,2) -- (1.65,2);
\draw [thick,red,densely dashed,->] (2.35,2) -- (3.65,2);
\draw [thick,red,densely dashed,->] (0.25,1.75) -- (1.75,0.25);
\draw [thick,red,densely dashed,->] (2.15,0.35) -- (3.85,3.65);
%\draw [thick,red,densely dashed,->] (0.35,1.85) -- (3.75,0.25);
\draw [thick,red,densely dashed,->] (0.25,2.25) -- (1.75,3.75);
\draw [thick,red,densely dashed,<-] (3.85,0.35) -- (2.15,3.65);

%\draw (-0.5,0) node[above] {$0$};

\end{tikzpicture}
\caption{A partial hypergraph of $\Hy$ with $7$ lines \emph{immersed} in the field plane $\mathcal{A}_3$.}
\end{figure}

\end{center}

\noindent
Let 
$$
2xw, 23u, 26v\quad \text{where }
u\in\{4,5,7\},\ v\in\{4,7,8\},\ 
\{w,x\}=\{\{4,5\},\{5,8\},\{7,8\}\},
$$
be the 3 remaining hyperedges. At least one of them is a line in the plane. Indeed, otherwise we would have
$$
u\in\{4,5\},\ v\in\{7,8\},\ 
\{w,x\}=\{\{4,5\},\{7,8\}\},
$$
a contradiction since $u,v,w,x$ are distinct.\\
Applying if necessary a linear transformation with matrix $\begin{pmatrix}
\alpha & 0\\\beta & 1\end{pmatrix}$ for which the vertices $0,1,2$ are fixed,
we may assume that $258$ is in $E$. We get $\{u,v\}=\{4,7\}$.
\begin{itemize}
\item First case: if the set of hyperedges is
$$
E=\{012, 036, 048, 057, 147, 138, 156, 258, 237, 246\},
$$
all the hyperedges are lines in the plane. We thus obtain the hypergraph 
$\Hy =\widehat{\mathcal{A}_3}$.
\begin{center}

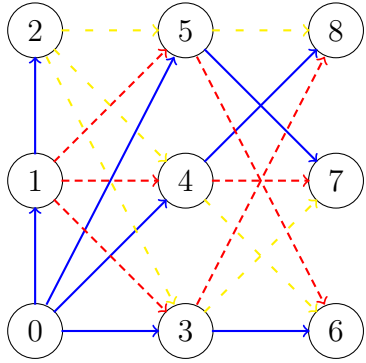
\begin{figure}[H]
\begin{tikzpicture}

%\draw [very thin, gray,scale=2] (0,0) grid [step=1] (2,2);
%\draw[thick] (0,0) -- (3,0);

\draw (0,0) node[circle,draw] {$0$};
\draw (0,2) node[circle,draw] {$1$};
\draw (0,4) node[circle,draw] {$2$};
\draw (2,0) node[circle,draw] {$3$};
\draw (2,2) node[circle,draw] {$4$};
\draw (2,4) node[circle,draw] {$5$};
\draw (4,0) node[circle,draw] {$6$};
\draw (4,2) node[circle,draw] {$7$};
\draw (4,4) node[circle,draw] {$8$};

\draw [thick,blue,->] (0,0.35) -- (0,1.65);
\draw [thick,blue,->] (0,2.35) -- (0,3.65);
\draw [thick,blue,->] (0.35,0) -- (1.65,0);
\draw [thick,blue,->] (2.35,0) -- (3.65,0);
\draw [thick,blue,->] (0.25,0.25) -- (1.75,1.75);
\draw [thick,blue,->] (2.25,2.25) -- (3.75,3.75);
\draw [thick,blue,->] (0.15,0.35) -- (1.85,3.65);
\draw [thick,blue,->] (2.25,3.75) -- (3.75,2.25);

\draw [thick,red,densely dashed,->] (0.35,2) -- (1.65,2);
\draw [thick,red,densely dashed,->] (2.35,2) -- (3.65,2);
\draw [thick,red,densely dashed,->] (0.25,1.75) -- (1.75,0.25);
\draw [thick,red,densely dashed,->] (2.15,0.35) -- (3.85,3.65);
%\draw [thick,red,densely dashed,->] (0.35,1.85) -- (3.75,0.25);
\draw [thick,red,densely dashed,->] (0.25,2.25) -- (1.75,3.75);
\draw [thick,red,densely dashed,->]  (2.15,3.65) -- (3.85,0.35);

\draw [thick,loosely dashed,->,yellow] (0.35,4) -- (1.65,4);
\draw [thick,loosely dashed,->,yellow] (2.35,4) -- (3.65,4);
\draw [thick,loosely dashed,->,yellow]  (0.15,3.65) -- (1.85,0.35);
\draw [thick,loosely dashed,->,yellow]  (2.25,0.25) -- (3.75,1.75);
\draw [thick,loosely dashed,->,yellow]  (0.25,3.75) -- (1.75,2.25);
\draw [thick,loosely dashed,->,yellow]  (2.25,1.75) -- (3.75,0.25);

\end{tikzpicture}
\caption{The hypergraph $\widehat{\mathcal{A}_3}$.}
\end{figure}
\end{center}

\item
Second case: if the set of hyperedges is
$$
E'=\{012, 036, 048, 057, 147, 138, 156, 258, 234, 276\},
$$
we let $\Hy'_3=(V,E')$.

\begin{center}
\begin{figure}[H]
\begin{tikzpicture}

%\draw [very thin, gray,scale=2] (0,0) grid [step=1] (2,2);
%\draw[thick] (0,0) -- (3,0);

\draw (0,0) node[circle,draw] {$0$};
\draw (0,2) node[circle,draw] {$1$};
\draw (0,4) node[circle,draw] {$2$};
\draw (2,0) node[circle,draw] {$3$};
\draw (2,2) node[circle,draw] {$4$};
\draw (2,4) node[circle,draw] {$5$};
\draw (4,0) node[circle,draw] {$6$};
\draw (4,2) node[circle,draw] {$7$};
\draw (4,4) node[circle,draw] {$8$};

\draw [thick,blue,->] (0,0.35) -- (0,1.65);
\draw [thick,blue,->] (0,2.35) -- (0,3.65);
\draw [thick,blue,->] (0.35,0) -- (1.65,0);
\draw [thick,blue,->] (2.35,0) -- (3.65,0);
\draw [thick,blue,->] (0.25,0.25) -- (1.75,1.75);
\draw [thick,blue,->] (2.25,2.25) -- (3.75,3.75);
\draw [thick,blue,->] (0.15,0.35) -- (1.85,3.65);
\draw [thick,blue,->] (2.25,3.75) -- (3.75,2.25);

\draw [thick,red,densely dashed,->] (0.35,2) -- (1.65,2);
\draw [thick,red,densely dashed,->] (2.35,2) -- (3.65,2);
\draw [thick,red,densely dashed,->] (0.25,1.75) -- (1.75,0.25);
\draw [thick,red,densely dashed,->] (2.15,0.35) -- (3.85,3.65);
%\draw [thick,red,densely dashed,->] (0.35,1.85) -- (3.75,0.25);
\draw [thick,red,densely dashed,->] (0.25,2.25) -- (1.75,3.75);
\draw [thick,red,densely dashed,->]  (2.15,3.65) -- (3.85,0.35);

\draw [thick,loosely dashed,->,yellow] (0.35,4) -- (1.65,4);
\draw [thick,loosely dashed,->,yellow] (2.35,4) -- (3.65,4);
\draw [thick,loosely dashed,->,yellow]  (0.25,3.75) -- (1.75,2.25);
\draw [thick,loosely dashed,<-,yellow]  (2,0.35) -- (2,1.65);
\draw [thick,loosely dashed,->,yellow]  (4,1.65) -- (4,0.35);
\draw [thick,loosely dashed,->,yellow]  (0.35,3.85) -- (3.65,2.15);

\end{tikzpicture}
\caption{The hypergraph $\Hy'_3$.}
\end{figure}
\end{center}

\end{itemize}
\noindent
We infer that up to isomorphism the class $\mathrsfs{H}_3$ reduces to $\{\widehat{\mathcal{A}_3},\Hy'_3\}$.

\medskip  
We have $\mathrm{q}(\Hy'_3)\le 5$ since
$$
0 \curvearrowright 012;\quad 1\curvearrowright 036,147,258;\quad 2 \curvearrowright 048,156;\quad 3\curvearrowright 057,234; \quad 4\curvearrowright 138,276;
$$
provides a hyperedge $5$-coloring of $\Hy'_3$.
Now suppose by contradiction that $4$ colors are sufficient. Without loss of generality, we may assume that we have made the following partial assignment of colors:
$$
0 \curvearrowright 012;\quad 1\curvearrowright 036,147;\quad 2 \curvearrowright 048,156;\quad 3\curvearrowright 057,138.
$$
We see that hyperedge $234$ then necessarily requires an additional color, 
contradiction. Hence $\mathrm{q}(\Hy'_3)=5$. 
Since $\mathrm{q}(\widehat{\mathcal{A}_3})=4$ by i), $\Hy'_3$ is not isomorphic to 
$\widehat{\mathcal{A}_3}$. We conclude that $|\mathrsfs{H}_3|=2$.

Note also that $\Hy'_3$ is maximal in the class of $3$-uniform linear 
hypergraphs: adding a new hyperedge yields a non linear hypergraph. In 
contrast 
$\widehat{\mathcal{A}_3}=(V,E)$ can be completed with two additional lines, namely  $345,678$.
This shows by a different way that $\Hy'_3$ and $\widehat{\mathcal{A}_3}$ are 
non isomorphic. 

\medskip
\noindent
iii) We use notation of ii) and only sketch the proof.  Since $e_0$ contains all the 
vertices of degree $k+1$ in $\Hy$, the subset $e_0$ of $V$ is fixed by any 
automorphim 
$\varphi$  of $\Hy$. Denoting by $\varphi_{|e_0}$ the restriction of $\varphi$ to 
$e_0$, we get  in both case
 $\Hy=\widehat{\mathcal{A}_3},\Hy'_3$, the homomorphism
$$
g:
\begin{aligned}
\mathrm{Aut}(\Hy)&\longrightarrow&\mathfrak{S}_3\\
\varphi & \longmapsto&\varphi_{|e_0}
\end{aligned}
$$ 
\begin{itemize}

\item When $\Hy=\widehat{\mathcal{A}_3}$ we have 
$$
\ker g=\langle p,t\rangle\simeq\mathfrak{S}_3
\quad \text{where\quad $p=(4\ 3\ 5)(6\ 7\ 8)$ and $t=(4\ 6)(5\ 7)(3\ 8)$}
$$
with usual notation for permutation running on the vertices $0,1\dots,8$.
We check easily that
both permutations $u=(0\ 1)(4\ 5)(6\ 8)$ and  $v=(0\ 1\ 2)(4\ 3\ 5)$ are automorphisms of  $\widehat{\mathcal{A}_3}$. Since
$g(u)=(0\ 1)$ and $g(v)=(0\ 1\ 2)$, $g$ is surjective and 
$|\mathrm{Aut}(\widehat{\mathcal{A}_3})|=36$. \\
Let $K=\langle u,v\rangle$. Since $u^2=v^3=\mathrm{id}$ and $u\circ v=v^2\circ u$, we have
$K\simeq\mathfrak{S}_3$. Moreover 
$K\cap \ker g=\{\mathrm{id}\}$ thus finally  
$\mathrm{Aut}(\widehat{\mathcal{A}_3})=K\cdot \ker g$. Since $t\circ v\ne v\circ t$, this is a semi direct product, hence 
$\mathrm{Aut}(\widehat{\mathcal{A}_3})\simeq \mathfrak{S}_3\rtimes\mathfrak{S}_3$.

\item When $\Hy=\Hy'_3$ we have 
$$
\ker g=\langle q\rangle\simeq\mathfrak{S}_2
\quad \text{where\quad $q=(3\ 6)(4\ 7)(5\ 8)$.}
$$
Both permutations $r=(0\ 1)(3\ 4)(6\ 7)$ and  $s=(0\ 1\ 2)(4\ 3\ 5\ 7)$ are automorphisms of  $\Hy'_3$. Since
$g(r)=(0\ 1)$ and $g(s)=(0\ 1\ 2)$, $g$ is surjective and 
$|\mathrm{Aut}(\Hy'_3)|=12$. \\
Let $K'=\langle u',v'\rangle$ where $u'=(0\ 1\ 2)(4\ 8\ 3)(5\ 6\ 7)$ and $v'=(0\ 1)(3\ 4)(6\ 7)$ are automorphisms of $\Hy'_3$. Since ${u'}^3={v'}^2=\mathrm{id}$ and $v'\circ u'={u'}^2\circ v'$, we have
$K'\simeq\mathfrak{S}_3$. Moreover 
$K'\cap \ker g=\{\mathrm{id}\}$ thus finally  
$\mathrm{Aut}(\Hy'_3)=K'\cdot \ker g\simeq \mathfrak{S}_3\times\mathfrak{S}_2=D_6$ since clearly the product is direct.\qedhere
\end{itemize}
\end{proof}

\subsection{The general case}

\begin{thm}\label{theo44}
For any prime power $k\ge3$, there exists a hypergraph 
$\Hy'=\Hy'_k$ in $\mathrsfs{H}_k$ such that 
\begin{enumerate}

\item $\mathrm{q}(\Hy')=2k-1$ and any hyperedge $(2k-1)$-coloring of $\Hy'$
is unique up to an affine permutation on the vertices,

\item $\Hy'$ contains exactly $k$ many critical hyperedges,

\item $\Hy'$ is maximal (for inclusion) in the class of all linear and $k$-uniform
 hypergraphs with $k^2$ vertices,

\item $\Hy'$ and $\widehat{\mathcal{A}_k}$ are non isomorphic. 
\end{enumerate}
\end{thm}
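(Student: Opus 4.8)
The plan is to construct $\Hy'_k$ by generalizing the hypergraph $\Hy'_3$ of Theorem~\ref{theo43} and then to extract (i)--(iv) from its geometry. Work inside the field plane $\mathcal{A}_k=\mathbb{F}_k^2$, let $e_0$ be the vertical line $x=0$, and pass to $\widehat{\mathcal{A}_k}$ by deleting the $k-1$ other vertical lines; its hyperedges are $e_0$ together with all affine lines of finite slope. Now fix $x_0=(0,0)\in e_0$, keep the horizontal line $\ell_0=\{y=0\}$ through $x_0$, delete the $k-1$ lines $y=mx$ with $m\neq0$ (which, besides $x_0$, cover exactly the points $(a,b)$ with $a\neq0,\ b\neq0$), and in their place insert the $k-1$ hyperedges $f_a=\{x_0\}\cup\{(a,b):b\neq0\}$, $a\in\mathbb{F}_k^{\ast}$. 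A direct check shows that $\Hy'_k=(V,E')$ is linear and $k$-uniform (two points of a fixed column~$a$ lie together only in $f_a$, and every surviving line meets each $f_a$ in at most one point since the lines through $x_0$ hitting a column twice have been removed), that $|V|=k^2$ and $|E'|=k^2+1$ (we removed and added the same number $k-1$ of hyperedges), and that the degrees are as prescribed: each vertex of $e_0$ keeps degree $k+1$, while a vertex off $e_0$ loses exactly one line and joins exactly one $f_a$, hence has degree $k$. Thus $\Hy'_k\in\mathrsfs{H}_k$. Assertion (iv) is then immediate, since $\mathrm{q}(\widehat{\mathcal{A}_k})=k+1<2k-1$ for $k\geq3$ by Proposition~\ref{pp42}, so (i) already separates the two hypergraphs (alternatively one uses (iii) and Proposition~\ref{pp42}(ii)).

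The heart of the matter is (i). For the upper bound I would exhibit the following $(2k-1)$-coloring: one colour for $e_0$; a second colour for all $k$ horizontal lines (including $\ell_0$), which are pairwise parallel; then, fixing one nonzero slope $m^{\ast}$, one colour for each pair $\{f_a,\ \ell_{a}\}$ where $\ell_a$ is the slope-$m^{\ast}$ line through $(a,0)$ — these $k-1$ colours are admissible because $f_a$ is disjoint from precisely the $k-1$ lines through $(a,0)$, and they together exhaust all the $f_a$ and the whole slope-$m^{\ast}$ parallel class; finally one colour for each of the remaining $k-2$ nonzero-slope parallel classes. This uses $1+1+(k-1)+(k-2)=2k-1$ colours. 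For the lower bound, the star $\{e_0,\ell_0,f_1,\dots,f_{k-1}\}$ of $x_0$ is an intersecting family of size $k+1$ and so forces $k+1$ colours. A line $L$ of nonzero slope meets both $e_0$ and $\ell_0$, and meets $f_a$ unless $L$ passes through $(a,0)$; since the $k-1$ lines through a fixed point $(a,0)$ pairwise intersect, at most one of them can carry the colour of $f_a$. Hence at most $k-1$ of the $(k-1)^2$ nonzero-slope lines reuse a star colour, at least $(k-1)(k-2)$ of them need fresh colours, and since lines of distinct slopes always meet while absorbing an entire slope class into the $f_a$-colours would consume all $k-1$ reuse slots, at most one slope class is absorbed; the surviving lines therefore span at least $k-2$ slopes and need at least $k-2$ fresh colours. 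Together $\mathrm{q}(\Hy'_k)\geq(k+1)+(k-2)=2k-1$.

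For the uniqueness clause in (i) I would then trace equality through this argument: a $(2k-1)$-coloring must have the horizontal class monochromatic, must absorb exactly one slope class $m^{\ast}$ bijectively by the $f_a$-colours (forcing $f_a$ to be paired with the line through $(a,0)$), and must colour each of the other $k-2$ slope classes monochromatically; the only residual freedom is the choice of $m^{\ast}\in\mathbb{F}_k^{\ast}$ together with a relabelling of colours. Since the maps $(x,y)\mapsto(x,\lambda y)$, $\lambda\in\mathbb{F}_k^{\ast}$, are automorphisms of $\Hy'_k$ that permute the nonzero slopes transitively and fix each $f_a$, $e_0$ and $\ell_0$, any two optimal colorings differ by an affine permutation of the vertices.

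Finally I would deduce (ii) and (iii) from this rigid description. For (iii): a $k$-subset $g$ that could be added to $\Hy'_k$ without destroying linearity must be an independent set of $[\Hy'_k]_2$; by Theorem~\ref{theo36}(i) every vertex of $e_0$ has degree $k^2-1$ there, so $g\cap e_0=\varnothing$, and since $\delta([\Hy'_k]_2)=k^2-k$ each remaining vertex has only $k-1$ non-neighbours, which forces $g$ to a specific set that one checks is not independent — so no hyperedge can be added and $\Hy'_k$ is maximal. For (ii), the uniqueness analysis tells which hyperedges must receive a ``private'' colour in every $(2k-1)$-coloring; one verifies that deleting such a hyperedge lowers the chromatic index while deleting any other leaves it at $2k-1$, and the count of critical hyperedges comes out to $k$. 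I expect (ii) to be the main obstacle: establishing the matching \emph{lower} bounds $\mathrm{q}(\Hy'_k\smallsetminus e)\geq 2k-1$ for the non-critical hyperedges $e$ is the delicate part, and one also has to confirm that $\mathrm{Aut}(\Hy'_k)$ is generated by affine maps so that the phrase ``unique up to an affine permutation'' in (i) is literally accurate.
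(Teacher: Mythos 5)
Your construction is the paper's $\Hy'_k$ in different coordinates (your $f_a$ are the paper's $a'_i$, your $\ell_0$ is its $e_1$), and your treatment of (i), (iii) and (iv) is essentially sound and close to the paper's. For the upper bound in (i) you exhibit exactly the paper's $(2k-1)$-coloring ($e_0$ alone, the horizontal class together, each $f_a$ paired with the slope-$m^{*}$ line through $(a,0)$, every other slope class monochromatic). For the lower bound the paper works with the stars of $x_0$ and $x_1$, while you count reuse slots: at most one nonzero-slope line per $a$ may take the colour of $f_a$, lines of distinct slopes always meet, and absorbing a whole slope class consumes all $k-1$ slots. This is correct and arguably cleaner, and pushing it to the equality case does force the colour classes you describe, so the colorings are parametrized by $m^{*}$ and carried into one another by the dilations $(x,y)\mapsto(x,\lambda y)$; your closing worry about $\mathrm{Aut}(\Hy'_k)$ is unnecessary, since one only needs \emph{some} affine automorphism relating two optimal colorings, which the dilation supplies. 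In (iii) the reduction to independent $k$-sets of $[\Hy'_k]_2$, the exclusion of $e_0$ and the forcing of the candidate set by the degree count are right; the omitted ``one checks'' is needed but routine (the forced set is either a full column, blocked by $f_a$, or the punctured deleted line together with $(a,0)$, blocked by a genuine line of $\Hy'_k$). The paper argues maximality differently, via the horizontal parallel class; both work. Part (iv) is the same one-line deduction from (i) as in the paper.

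The genuine gap is (ii), and it is not one you could have closed. Your plan --- find the hyperedges with a private colour in every $(2k-1)$-coloring and claim the count ``comes out to $k$'' --- is contradicted by your own rigidity analysis: in every optimal coloring the only private colour is that of $e_0$, while each $f_a$ is forced to share its colour with the slope-$m^{*}$ line through $(a,0)$. In fact no $f_a$ is critical: the $2k-1$ hyperedges $\{e_0,\ell_0\}\cup\{f_{a'}:a'\neq a\}\cup\{\text{nonzero-slope lines through }(a,0)\}$ all survive in $\Hy'_k\smallsetminus f_a$ and are pairwise intersecting (the star hyperedges meet at $x_0$, the listed lines meet at $(a,0)$, and a line of slope $m\neq0$ through $(a,0)$ meets $e_0$ at $(0,-ma)$, $\ell_0$ at $(a,0)$ and $f_{a'}$ at $(a',m(a'-a))$), whence $\mathrm{q}(\Hy'_k\smallsetminus f_a)\ge\Delta_0(\Hy'_k\smallsetminus f_a)\ge 2k-1$ and deleting $f_a$ does not lower the chromatic index. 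The same slot-counting shows that no horizontal and no nonzero-slope line is critical either, so $e_0$ is the \emph{unique} critical hyperedge of this $\Hy'_k$; a direct verification for $k=3$ confirms this for $\Hy'_3$. So assertion (ii) cannot be established for this construction. Note that the paper's own argument for (ii) rests on the claim that $a'_j$ is the only hyperedge coloured $c_j$, which is incompatible with the forced pairing of $a'_j$ with the slope-$\lambda$ line through $x_j$ established in its proof of (i); the non-isomorphism (iv) is unaffected, since it already follows from (i) and $\mathrm{q}(\widehat{\mathcal{A}_k})=k+1$.
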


\begin{proof}
In the field plane $\mathcal{A}_k$ let $x_0$ be an arbitrary vertex and $\Hy (x_0)=\{e_0,e_1,\dots,e_k\}$ be the star centered at $x_0$. Let $e_1=\{x_0,\dots,x_{k-1}\}$ and $a_1,\dots,a_{k-1}$ be the
parallel lines to $e_0$ with $a_i\cap e_1=\{x_i\}$, $1\le i\le k-1$. We apply to $\mathcal{A}_k$ the following changes on its set of hyperedges:
\begin{itemize}

\item $e_2,\dots,e_k$ are removed,

\item for any $1\le i\le k-1$, $a_i$ is replaced by the hyperedge $a'_i=\{x_0\}\cup(a_i\smallsetminus\{x_i\})$,

\item $e_1$ and all other hyperedges, namely those intersecting $e_0\smallsetminus\{x_0\}$ including $e_0$ itself, are kept unchanged.

\end{itemize}

\begin{center}
\begin{figure}[H]
\begin{tikzpicture}[scale=0.9]

%\draw [very thin, gray,scale=2] (0,0) grid [step=1] (2,2);
\draw[thick] (1,0) -- (1,2);
\draw[thick,dashed] (1,2) -- (1,4);
\draw[thick] (1,4) -- (1,6);
%\draw[thick] (0,0) -- (0,6);
\draw[thick] (2,0) -- (2,2);
\draw[thick,dashed] (2,2) -- (2,4);
\draw[thick] (2,4) -- (2,6);
\draw[thick] (6,0) -- (6,2);
\draw[thick,dashed] (6,2) -- (6,4);
\draw[thick] (6,4) -- (6,6);
%\draw[thick] (0,6) -- (6,6);

\draw (0,-0.5) node {$e_0$};
\draw (1,-0.5) node {$a_1$};
\draw (2,-0.5) node {$a_2$};
\draw (6,-0.5) node {$a_{k-1}$};
\draw (6.5,6) node {$e_1$};
%\draw (-0.5,5) node {$y_1$};
%\draw (-0.5,4) node {$y_2$};
%\draw (-0.7,2) node {$y_{k-3}$};
%\draw (-0.7,1) node {$y_{k-2}$};
%\draw (-0.7,0) node {$y_{k-1}$};
\draw (0,6) node {$\bullet$};
\draw (1,6) node {$\bullet$};
\draw (2,6) node {$\bullet$};
\draw (6,6) node {$\bullet$};
\draw (0,6.5) node {$x_0$};
\draw (1,6.5) node {$x_1$};
\draw (2,6.5) node {$x_2$};
\draw (6,6.5) node {$x_{k-1}$};
%\draw [thick,dotted] (1.5,6) -- (5.5,6);
%\draw [thick,dotted] (0,1.5) -- (0,3.5);
%\draw (3,6) node {$\dots$};
%\draw (5,6) node {$\dots$};

\draw [thick,dotted] (0,6) -- (6,0);
\draw [thick,dotted] (0,6) -- (3.25,-0.5);
\draw [thick,dotted] (0,6) -- (6.5,2.75);
\draw (3.2,4.8) node {$e_{u-1}$};
\draw (3,3.4) node {$e_u$};
\draw (3,1.2) node {$e_{u+1}$};

\draw (0,5) node {$\bullet$};
\draw (0,4) node {$\bullet$};
%\draw (0,3) node {$\vdots$};
%\draw (0,2) node {$\vdots$};
\draw (0,2) node {$\bullet$};
\draw (0,1) node {$\bullet$};
\draw (0,0) node {$\bullet$};

\draw (1,5) node {$\bullet$};
\draw (1,4) node {$\bullet$};
%\draw (1,3) node {$\vdots$};
%\draw (1,2) node {$\vdots$};
\draw (1,2) node {$\bullet$};
\draw (1,1) node {$\bullet$};
\draw (1,0) node {$\bullet$};

\draw (2,5) node {$\bullet$};
\draw (2,4) node {$\bullet$};
%\draw (1,3) node {$\vdots$};
%\draw (1,2) node {$\vdots$};
\draw (2,2) node {$\bullet$};
\draw (2,1) node {$\bullet$};
\draw (2,0) node {$\bullet$};

\draw (6,5) node {$\bullet$};
\draw (6,4) node {$\bullet$};
%\draw (6,3) node {$\vdots$};
\draw (6,2) node {$\bullet$};
\draw (6,1) node {$\bullet$};
\draw (6,0) node {$\bullet$};

\draw[thick,rounded corners] (-0.2,5.8) rectangle (6.2,6.2);
\draw[thick,rounded corners] (-0.2,-0.2) rectangle (0.2,6.2);

\draw[dotted] (0,0) -- (6,0);
\draw[dotted] (0,1) -- (6,1);
\draw[dotted] (0,2) -- (6,2);
\draw[dotted] (0,4) -- (6,4);
\draw[dotted] (0,5) -- (6,5);

\draw (3,7.2) node {$\mathcal{A}_k$};
\end{tikzpicture}\quad
\begin{tikzpicture}[scale=0.9]
%\draw [very thin, gray,scale=2] (0,0) grid [step=1] (2,2);
\draw[thick] (1,0) -- (1,2);
\draw[thick,dashed] (1,2) -- (1,4);
\draw[thick] (1,4) -- (1,5);
\draw[thick] (1,5) -- (0,6);
\draw[thick] (2,0) -- (2,2);
\draw[thick,dashed] (2,2) -- (2,4);
\draw[thick] (2,4) -- (2,5);
\draw[thick] (2,5) -- (0,6);
%\draw[thick] (0,0) -- (0,6);
\draw[thick] (6,0) -- (6,2);
\draw[thick,dashed] (6,2) -- (6,4);
\draw[thick] (6,4) -- (6,5);
\draw[thick] (6,5) -- (0,6);
%\draw[thick] (0,6) -- (6,6);

\draw (0,-0.5) node {$e_0$};
\draw (1,-0.5) node {$a'_1$};
\draw (2,-0.5) node {$a'_2$};
\draw (6,-0.5) node {$a'_{k-1}$};
\draw (6.5,6) node {$e_1$};
\draw (6.5,5) node {$e'_2$};
\draw (6.5,4) node {$e'_3$};
\draw (6.7,2) node {$e'_{k-3}$};
\draw (6.7,1) node {$e'_{k-2}$};
\draw (6.7,0) node {$e'_{k-1}$};
\draw (-0.5,5) node {$y_1$};
\draw (-0.5,4) node {$y_2$};
\draw (-0.7,2) node {$y_{k-3}$};
\draw (-0.7,1) node {$y_{k-2}$};
\draw (-0.7,0) node {$y_{k-1}$};
\draw (0,6) node {$\bullet$};
\draw (1,6) node {$\bullet$};
\draw (2,6) node {$\bullet$};
\draw (6,6) node {$\bullet$};
\draw (0,6.5) node {$x_0$};
\draw (1,6.5) node {$x_1$};
\draw (2,6.5) node {$x_2$};
\draw (6,6.5) node {$x_{k-1}$};
%\draw [thick,dotted] (1.5,6) -- (5.5,6);
%\draw [thick,dotted] (0,1.5) -- (0,3.5);
%\draw (3,6) node {$\dots$};
%\draw (5,6) node {$\dots$};

%\draw [thick,dotted] (0,6) -- (6,0);
%\draw [thick,dotted] (0,6) -- (3.5,-0.5);
%\draw [thick,dotted] (0,6) -- (6.5,2.5);
%\draw (3.2,4.6) node {$e_{u-1}$};
%\draw (3,3.3) node {$e_u$};
%\draw (3,1.3) node {$e_{u+1}$};

\draw (0,5) node {$\bullet$};
\draw (0,4) node {$\bullet$};
%\draw (0,3) node {$\vdots$};
%\draw (0,2) node {$\vdots$};
\draw (0,2) node {$\bullet$};
\draw (0,1) node {$\bullet$};
\draw (0,0) node {$\bullet$};

\draw (1,5) node {$\bullet$};
\draw (1,4) node {$\bullet$};
%\draw (1,3) node {$\vdots$};
%\draw (1,2) node {$\vdots$};
\draw (1,2) node {$\bullet$};
\draw (1,1) node {$\bullet$};
\draw (1,0) node {$\bullet$};

\draw (2,5) node {$\bullet$};
\draw (2,4) node {$\bullet$};
%\draw (1,3) node {$\vdots$};
%\draw (1,2) node {$\vdots$};
\draw (2,2) node {$\bullet$};
\draw (2,1) node {$\bullet$};
\draw (2,0) node {$\bullet$};

\draw (6,5) node {$\bullet$};
\draw (6,4) node {$\bullet$};
%\draw (6,3) node {$\vdots$};
\draw (6,2) node {$\bullet$};
\draw (6,1) node {$\bullet$};
\draw (6,0) node {$\bullet$};

\draw[thick,rounded corners] (-0.2,5.8) rectangle (6.2,6.2);
\draw[thick,rounded corners] (-0.2,-0.2) rectangle (0.2,6.2);

%\draw[thick,rounded corners] (-0.2,4.8) rectangle (6.2,5.2);
%\draw[thick,rounded corners] (-0.2,3.8) rectangle (6.2,4.2);
%\draw[thick,rounded corners] (-0.2,1.8) rectangle (6.2,2.2);
%\draw[thick,rounded corners] (-0.2,0.8) rectangle (6.2,1.2);
%\draw[thick,rounded corners] (-0.2,-0.2) rectangle (6.2,0.2);

\draw[dotted] (0,0) -- (6,0);
\draw[dotted] (0,1) -- (6,1);
\draw[dotted] (0,2) -- (6,2);
\draw[dotted] (0,4) -- (6,4);
\draw[dotted] (0,5) -- (6,5);

\draw (3,7.2) node {$\Hy'$};

\end{tikzpicture}
\caption{From $\mathcal{A}_k$ to $\Hy'$. Vertices of $e_0$ and $e_1$ are labelled so that the lines $\langle x_i,y_i\rangle$ 
($1\le i\le k-1$) are parallel in $\mathcal{A}_k$ and $\Hy'$.}
\label{fig4}
\end{figure}
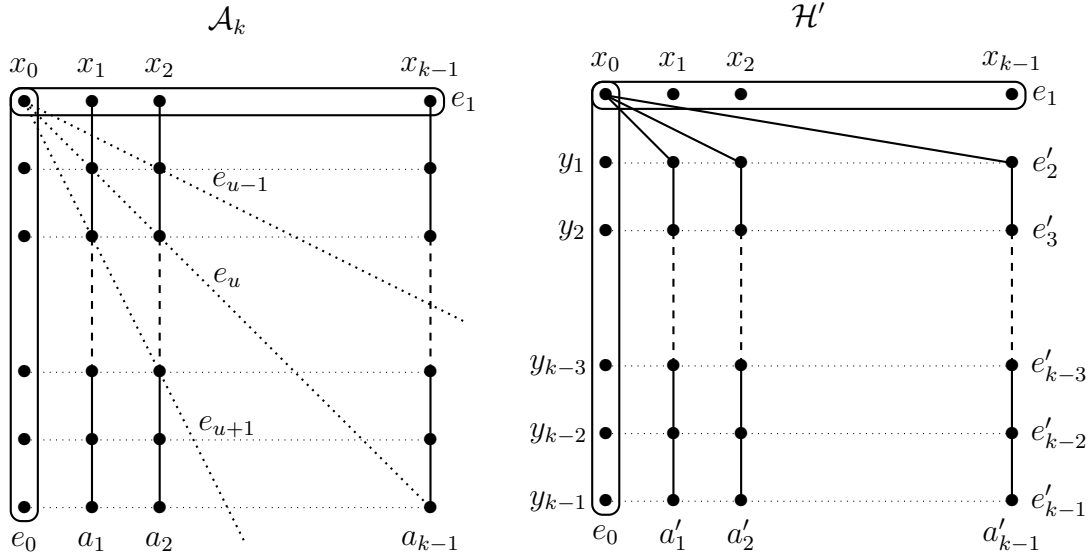
\end{center}

We get a $k$-uniform hypergraph $\Hy'=(V',E')$ such that $|V'|=|V(\mathcal{A}_k)|=k^2$, $|E'|=|E(\mathcal{A}_k)|-(k-1)=k^2+1$  and $\mathrm{deg}_{\Hy'}(x)=\mathrm{deg}_{\mathcal{A}_k}(x)=k+1$ for any 
$x\in e_0$. Moreover $V'\smallsetminus e_0=\{x_{i,j},\ 1\le i\le k,\ 
1\le j\le k-1\}$ where $\{x_{i,j}\}=e_i\cap a_j$. It  follows that
$\mathrm{deg}_{\Hy'}(x_{i,j})=\mathrm{deg}_{\mathcal{A}_k}(x_{i,j})-1$. Finally $\Hy'\in \mathrsfs{H}_k$.

\medskip
\noindent
i)
We now estimate $\mathrm{q}(\Hy')$.  

\begin{itemize}
\item We first fix a hyperedge coloring 
$\mathfrak{c}:\Hy'\longrightarrow\mathbb{N}$ of $\Hy'$. 
Hyperedges in the star $\Hy'(x_0)=\{e_0,a'_1,\dots,a'_{k-1},e_1\}$ are 
respectively colored by  $k+1$ many distinct colors in the set 
$\mathcal{C}=\{c_0,c_1,\dots,c_{k-1},c_k\}$. Now
the star $\Hy'(x_1)$ is composed by $e_1$ and the lines $\langle x_1,y_i\rangle$  
$(1\le i\le k-1$) of $\mathcal{A}_k$ passing thru $x_1$ and $y_i$ respectively 
where we have set $e_0=\{x_0,y_1,\dots,y_{k-1}\}$. 
Vertices of $e_0$ are labelled so that the hyperedges passing thru $x_i,y_i$ ($1\le i\le k-1$) are parallel. We denote by $c'_i$ 
($1\le i\le k-1$) the $k-1$ many distinct colors that are assigned to those lines $\langle x_1,y_i\rangle$ ($1\le i\le k-1$). 
For any $2\le j\le k-1$, each line $\langle x_1,y_i\rangle$ intersects $a_j$ in 
$\mathcal{A}_k$, thus intersects $a'_j$ in $\Hy'$ since the intersection vertex 
cannot be $x_j$ by linearity of $\Hy'$. Thus $c'_i\not\in \mathcal{C}\smallsetminus\{c_1\}$ for any $1\le i\le k-1$. Since there is at most 
one line $\langle x_1,y_i\rangle$ ($1\le i\le k-1$) that are colored by $c_1$,
we infer $\mathrm{q}(\Hy')\ge k+1+k-2=2k-1$. 
\\
Note that if none of the lines 
$\langle x_1,y_i\rangle$ ($1\le i\le k-1$) is colored by $c_1$, then 
the given hyperedge coloring $\mathfrak{c}$ has at least $2k$ many colors. 
We use this fact in order to get an optimal coloring of $\Hy'$ in what follows.

\item 
We proceed as follows. We color as before $\Hy'(x_0)$ by $k+1$ many distinct colors $c_0,\dots,c_k$. We observed that $k-2$ among the $k-1$ lines 
$\langle x_1,y_i\rangle$ $(1\le i\le k-1)$ require new colors
$c_{k+1},\dots,c_{2k-2}$. We have to show that we can color the remaining
hyperedges of $\Hy'$ without any additional color.\\[0.2em]
Let $e$ be a not yet colored hyperedge of $\Hy'$. \\
-- if $e$ is the last line $\langle x_1,y_{i_1}\rangle$ passing thru $x_1$, then
we must color it with $c_1$; we denote by $\lambda\in\mathbb{F}_k\smallsetminus\{0\}$ its slope in the field plane $\mathcal{A}_k$;\\
-- if $e\in\{e'_2,\dots,e'_{k-1}\}$ is a parallel line to $e_1$, then we must color it with $c_k$ since it intersects $e_0,a'_i,\langle x_1,y_{i}\rangle$ ($1\le i\le k-1$);\\
-- for each $2\le j\le k-1$, the unique parallel $e=\langle x_j,y_{i_j}\rangle$ to 
$\langle x_1,y_{i_1}\rangle$ needs to be colored with $c_j$ since it intersects
$e_0,e_1,a'_i$ $(i\ne j)$ and $\langle x_1,y_i\rangle$ $(i\ne i_1)$;
\\
--  for any $2\le j\le k-1$ and $1\le i\le k-1$, $i\ne i_j$,
the line $e=\langle x_j,y_{i}\rangle$ can only be colored with the same color than that of its unique parallel in $\Hy'(x_1)$.\\
By this way only hyperedges of $\Hy'$ that are parallel lines in $\mathcal{A}_k$ may have the same color. Indeed by construction, each color $c_{k+i-1}$ ($1\le i\le k-1$) is only assigned to disjoint hyperedges of $\Hy'$ which are parallel lines in
$\mathcal{A}_k$. Color $c_0$ is exclusive to $e_0$. Finally color $c_j$ ($1\le j\le k-1$) is only assigned to both hyperedges $a'_j$ and $\langle x_j,y_{i_j}\rangle$ which are non intersecting. \\  
We thus have a hyperedge 
$(2k-1)$-coloring of $\Hy'$. Hence $\mathrm{q}(\Hy')=2k-1$.

\item We have shown in fact that hyperedge $(2k-1)$-coloring of $\Hy'$ are parametrized by 
the slope  $\lambda\in\mathbb{F}_k\smallsetminus\{0\}$ of the parallel lines
$\langle x_j,y_{i_j}\rangle$ ($1\le j\le k-1$). Conversely different slopes $\lambda$ yield distinct coloring when vertices and hyperedges of $\Hy'$ are fixed beforehand. Hence if we ignore color labels we infer that $\Hy'$ admits exactly 
$k-1$ many hyperedge $(2k-1)$-coloring $\mathfrak{c}_{\Hy',\lambda}$,
$\lambda\in\mathbb{F}_k\smallsetminus\{0\}$.\\
Precisely each slope $\lambda\in\mathbb{F}_k\smallsetminus\{0\}$ is associated to some admissible one to one 
mapping $j\longmapsto i_j$ from $\{1,\dots,k-1\}$ on itself. It
yields a permutation $\sigma$ on the $y_j$'s and the $e'_j$'s respectively  
$\sigma:y_j\longmapsto y_{i_j}$, $e'_j\longmapsto e'_{i_j}$ which extends to a one to one mapping from the vertices of $e'_j$ onto
those of $\sigma(e'_j)$ ($1\le j\le k-1$) by sending $a'_i\cap e'_j$ on
$a'_i\cap \sigma(e'_j)$ ($1\le i\le k-1$), see Figure \ref{fig4}. 
Since $\Hy'$ is immersed in the field plane 
$\mathcal{A}_k=\mathbb{F}_k^2$ we may consider
that $e_0=\{0\}\times \mathbb{F}_k$ and $e_1=\mathbb{F}_k\times\{0\}$.
It follows $\sigma$ is an affine transformation of the field plane, namely 
the \emph{dilation from axis $e_1$ towards axis $e_0$ by a factor of $\lambda$}. We have $\sigma(\Hy')=\Hy'$. Indeed\\
-- $\sigma$ permutes the $e'_j$'s,\\
-- $\sigma(e_0)=e_0$, $\sigma(e_1)=e_1$, $\sigma(a'_i)=a'_i$ $(2\le i\le k-1)$,\\
-- $\sigma(\langle x_i,y_j\rangle)=\langle x_i,\sigma(y_j)\rangle$  ($1\le i,j\le k-1$).\\
It is thus an automorphim of $\Hy'$.  Finally 
$$
\mathfrak{c}_{\Hy',\lambda}(e)
=\mathfrak{c}_{\Hy',1}(\sigma^{-1}(e)),\quad \text{for any $e\in E(\Hy')$.}
$$
This shows that for any coloring $\mathfrak{c}$ of $\Hy'$ there exists an \emph{affine} automorphism of $\Hy'$ such that $\mathfrak{c}(e)=
\mathfrak{c}_{\Hy',1}(\sigma^{-1}(e))$ for any $e\in E(\Hy')$.
\end{itemize}

\noindent
ii) Here we show that only 
$e_0,a'_1,\dots,a'_{k-1}$ are critical hyperedges in $\Hy'$.

\begin{itemize}
\item By the coloring process implemented above, $a'_j$ ($2\le j\le k-1$) is the unique hyperedge of $\Hy'$ with color $c_j$. By symmetry
on the vertices $x_i$ ($2\le i\le k-1$),  there exists a hyperedge coloring of $\Hy'$ such that $a'_1$ has an exclusive color $c_{1}$.  Hence any $a'_j$ ($1\le j\le k$) is critical. It  is clear that $e_0$ is also critical since it intersects all other 
hyperedges.

\item
Any parallel of $e_1$, including $e_1$ itself, intersects all the hyperdeges of $\Hy'$
which are not parallel to $e_1$. Removing one of them does not affect
the number of required colors. Thus none of them is critical.

\item In the first part of the proof of i), we
notice that just coloring $e_0,e_1,a'_i,\langle x_1,y_i\rangle$ ($1\le i\le k-1$) already required $2k-1$ colors.  We infer that none of the lines $\langle x_j,y_i\rangle$ is critical, $2\le j\le k-1$ and $1\le i\le k-1$. By symmetry of $\Hy'$ with regard to 
the vertices $x_j$, this is also true for the lines $\langle x_1,y_i\rangle$,
$1\le i\le k-1$.

\end{itemize}

\noindent
iii) 
\begin{comment}
Indeed let $\Hy''$ be such a hypergraph and assume that 
$\Hy'$ is a partial hypergraph of $\Hy''$. Let $e''\in E(\Hy'')$ and $x,y,z$ be three distinct vertices  in $e''$.
Denote  by $\ell_{x,y}$ the unique hyperedge of $\mathcal{A}_k$ containing
both $x,y$. 
\begin{itemize}
\item If $\ell_{x,y}\not\in\{a_1,\dots,a_{k-1},e_2,\dots,e_{k-1}\}$ then 
$\ell_{x,y}\in E(\Hy')$. Since $\Hy''$ is linear we must have $\ell_{x,y}=e''$ thus $e''\in E(\Hy')$.
\item Assume that there is $1\le i\le k-1$ such that $\ell_{x,y}=a_i$.
In particular $x_0\not\in\{x,y\}$.\\
If $x_i=x$ then $y\in a'_i\smallsetminus\{x_0\}$ and $e''\ne a'_i$.
Since $\Hy''$  is linear this implies $z\not\in a'_i$ and $e''=\ell_{x_i,z}\in E(\Hy')$.
. If $z$ were in $a'_i$ we would have $\{y,z\}\subset a'_i$ yielding $e''=\ell_{y,z}\in E(\Hy')$. If $z\not\in a'_i$
\item If there is $1\le i\le k-1$ such that $\ell_{x,y}=a_i$ with $x=x_i$, then 
$x,y\in a'_i$. Since $\Hy''$ is linear,  $e''=a'_i\in E(\Hy')$.  
\end{itemize} 
\end{comment}
Let $\Hy''$ be a $k$-uniform linear hypergraph with $k^2$ vertices such that 
$E(\Hy')\subset E(\Hy'')$. Let $e\in E(\Hy'')$.  Set $e'_1=e_1$ and denote by  $e'_2,\dots,e'_{k}$ its parallel lines in $\mathcal{A}_k$ (see Figure~\ref{fig4}).
These hyperedges are also in $\Hy'$ hence form a partition 
of $V(\Hy')$. We thus have $e\cap e'_i=\{z_i\}$ 
for distinct vertices $z_i$, $1\le i\le k$, and
$e=\{z_1,\dots,z_k\}$.
\begin{itemize}

\item If $z_1=x_0$ then $z_2$ is not in $e'_1=e_1$ and does belong to some 
$a'_j$. Since $x_0\in a'_j\in E(\Hy')\subset E(\Hy'')$ and $\Hy''$ is linear, we infer $e=a'_j\in E(\Hy')$.

\item Assume that $z_1=x_j$ for some $1\le j\le k-1$. Since $k\ge3$ and
$|a'_j\cap e|\le1$, there
exists $2\le i\le k$ such that $z_i\not\in a'_j$. This implies that 
$z_1$ and $z_i$ lie on two distinct proper parallels to $e_0$ in 
$\mathcal{A}_k$. We then see that the line $\langle z_1,z_i\rangle$ containing both $z_1$ and $z_i$
in  $\mathcal{A}_k$
is distinct from the $a_t$'s, $1\le t\le k-1$, and from the $e_t$'s, $0\le t\le k$. Thus it is a hyperedge of $\Hy'$ as well of $\Hy''$. By linearity of $\Hy''$
we conclude that $e=\langle z_1,z_i\rangle\in E(\Hy')$.
\end{itemize}

\noindent
iv) In view of Proposition \ref{pp42} the result follows equivalently from i), ii)  or iii).
\end{proof}

%\begin{rems}\label{rm45}
\subsection{Remarks and questions}
\begin{enumerate}

\item As a generalization of iii) of Theorem~\ref{theo43} and an extension of iv) of Theorem~\ref{theo44}, it should be interesting to determine and to compare the groups of automorphisms of $\widehat{\mathcal{A}_k}$ and 
$\Hy'_k$.
 %We may ask how these results could be extended to general $k$.

\item In Proposition \ref{pp42} we investigated the case when $k$ is a prime power and we proved that $\mathrsfs{H}_k$ is not empty
and that  $|\mathrsfs{H}_k|\ge2$. Estimating $|\mathrsfs{H}_k|$ seems to be a hard problem.

\item When $k$ is not a prime power it is not obvious neither known that $\mathrsfs{H}_k$ is non-empty. In contrast with the prime power case we should mention that it is conjectured (known as the prime power Conjecture) that there is no affine plane with $k^2$ points. For instance for $k=6$ this is related to the $36$ officers problem posed by
Euler and to the existence of Graeco-Latin squares of order~$6$: there is no affine plane of order~$6$ (see \cite{Tarry1900,Stinson1984}). 
However one can ask the question whether or not $\mathrsfs{H}_6$ is empty.

\item 
Our construction of $\Hy'_k$ the Theorem \ref{theo44} starts from the affine
plane $\mathcal{A}_k$. We may ask the question whether or not all hypergraphs of  $\mathrsfs{H}_k$ can be derived in a certain way from 
$\mathcal{A}_k$. However,
as also noticed in \cite{Dow1986}, we need to avoid hypergraphs in which 
parallel is an equivalence relation.
\end{enumerate}
%\end{rems}

\section{\bf Further results}

\subsection{Some consequences}

We first derive the following result establishing Conjecture~\ref{conj14} for hypergraphs with small maximum degree.

\begin{thm}\label{theo51}
Let $\Hy =(V,E)$ be a linear hypergraph with $n$ vertices and without loop such 
that $\Delta(\Hy)\le\sqrt{n}+1$. Then $\mathrm{q}(\Hy)\le n$. 
\end{thm}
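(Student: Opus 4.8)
The plan is to prove the statement by induction on the number of hyperedges $|E|$, using at each step the dichotomy $\mathrm{ar}(\Hy)\ge\sqrt n$ versus $\mathrm{ar}(\Hy)<\sqrt n$: in the first case Theorem~\ref{theo16} finishes immediately, and in the second case I delete a smallest hyperedge and combine the induction hypothesis with Lemma~\ref{lem32}. The base case $|E|=0$ is trivial since then $\mathrm{q}(\Hy)=0\le n$, and note that since $\Hy$ is loopless, $|E|\ge1$ forces $n\ge2$. Recall also that linearity of $\Hy$ gives $\Delta([\Hy]_2)\le |V|-1=n-1$, since a vertex of the $2$-section has at most $n-1$ neighbours.

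For the inductive step, suppose $|E|\ge1$. If $\mathrm{ar}(\Hy)\ge\sqrt n$, then Theorem~\ref{theo16} gives $\mathrm{q}(\Hy)\le\Delta([\Hy]_2)+1\le n$. Otherwise $\mathrm{ar}(\Hy)<\sqrt n$; choose $e_0\in E$ with $|e_0|=\mathrm{ar}(\Hy)$ and set $\Hy'=\Hy\smallsetminus e_0$. Then $\Hy'$ is linear and loopless, has $n$ vertices, and $\Delta(\Hy')\le\Delta(\Hy)\le\sqrt n+1$, so by the induction hypothesis $\mathrm{q}(\Hy')\le n$. Since $\mathrm{L}(\Hy')$ is $\mathrm{L}(\Hy)$ with the vertex $e_0$ removed, we have $\mathrm{q}(\Hy)-1\le\mathrm{q}(\Hy')\le\mathrm{q}(\Hy)$. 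If $\mathrm{q}(\Hy')=\mathrm{q}(\Hy)$ we are done; if $\mathrm{q}(\Hy')=\mathrm{q}(\Hy)-1$ then $e_0$ is critical in $\Hy$, and Lemma~\ref{lem32} together with the linear expression for $\mathrm{d}_\Hy(e_0)$ from \eqref{eqidenhyp2} yields
\[
\mathrm{q}(\Hy)-1\le \mathrm{d}_\Hy(e_0)=\sum_{x\in e_0}(\mathrm{deg}_{\Hy}(x)-1)\le |e_0|\,(\Delta(\Hy)-1)\le \mathrm{ar}(\Hy)\cdot\sqrt n<\sqrt n\cdot\sqrt n=n .
\]
As $\mathrm{d}_\Hy(e_0)$ is an integer this gives $\mathrm{q}(\Hy)-1\le n-1$, i.e.\ $\mathrm{q}(\Hy)\le n$, completing the induction.

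The step I would watch most carefully is the case $\mathrm{ar}(\Hy)<\sqrt n$: the whole point is that a minimum hyperedge has line-graph degree \emph{strictly} less than $n$ — here both hypotheses enter, $|e_0|=\mathrm{ar}(\Hy)<\sqrt n$ and $\Delta(\Hy)-1\le\sqrt n$, and it is exactly this strictness that lets integrality push $\mathrm{d}_\Hy(e_0)<n$ down to $\mathrm{d}_\Hy(e_0)\le n-1$. One cannot replace this by a plain greedy or degeneracy bound on $\mathrm{L}(\Hy)$, because when large hyperedges are present some vertices of $\mathrm{L}(\Hy)$ may have degree exceeding $n-1$; that is precisely why the case $\mathrm{ar}(\Hy)\ge\sqrt n$ is routed through Theorem~\ref{theo16} rather than through the line graph directly. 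The remaining verifications are routine: that edge deletion preserves linearity, looplessness, the vertex count, and the bound $\Delta\le\sqrt n+1$ (all monotone under removing a hyperedge), and that deleting one vertex of a graph drops its chromatic number by at most one.
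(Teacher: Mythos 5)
Your proof is correct and follows essentially the same route as the paper's: the dichotomy $\mathrm{ar}<\sqrt n$ versus $\mathrm{ar}\ge\sqrt n$, with the large-antirank case routed through Theorem~\ref{theo16} (plus $\Delta([\Hy]_2)\le n-1$) and the small-antirank case handled by Lemma~\ref{lem32} and \eqref{eqidenhyp2} applied to a smallest hyperedge, using the strict bound $\mathrm{ar}(\Hy)(\Delta(\Hy)-1)<n$. The only cosmetic difference is organizational: the paper first passes to a critical partial hypergraph with the same chromatic index, whereas you run an induction on $|E|$ deleting a minimum hyperedge, which is the same reduction in different form.
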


\begin{proof}
Let $\Hy'=(V,E')$ be a critical partial hypergraph of $\Hy$ such that $\mathrm{q}(\Hy')=\mathrm{q}(\Hy)$. 
\begin{itemize}
\item If $\mathrm{ar}(\Hy')\ge\sqrt{n}$ we infer from \cite{Sanchez2008} or 
from Theorem \ref{theo16} that $\mathrm{q}(\Hy)=\mathrm{q}(\Hy')\le n$.

\item 
If $\mathrm{ar}(\Hy')<\sqrt{n}$ then we take $e\in E'$ such that
$|e|=\mathrm{ar}(\Hy')$.  By Lemma \ref{lem32} and~\eqref{eqidenhyp2} we get
\begin{align*}
\mathrm{q}(\Hy)-1=\mathrm{q}(\Hy')-1&\le \mathrm{d}_{\Hy'}(e)
=\sum_{x\in e}(\mathrm{deg}_{\Hy'}(x)-1)\\
&\le |e|(\Delta(\Hy')-1)\le 
\mathrm{ar}(\Hy')(\Delta(\Hy)-1)<n.
\end{align*}

\end{itemize}
 Hence the result.
\end{proof}

Note that by \eqref{lineg} we have $\mathrm{q}(\Hy)\le n$ whenever
$\mathrm{r}(\Hy)(\Delta(\Hy -1)<n$. Thus by letting 
$u=\Delta(\Hy)-\sqrt{n}-1$ with $0<u\le \sqrt{n}-2$  we see that  Conjecture~\ref{conj14} holds under the additional strong hypothesis on the rank $\mathrm{r}(\Hy)\le \sqrt{n}-u$.

\smallskip
The second statement extends the validity of Theorem \ref{theo16} when $\Hy$ is a uniform linear hypergraph. The extension is moderate but allows us to drop down the threshold $\sqrt{|V|}$ for the antirank.

\begin{thm}\label{theo52}
Let $k\ge2$ and $\Hy =(V,E)$ be a $k$-uniform linear hypergraph with $n$ vertices satisfying  $n\le k^2+k-2$. Then $\mathrm{q}(\Hy)\le \Delta([\Hy]_2)+1$.
\end{thm}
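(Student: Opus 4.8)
The plan is to reduce to a \emph{critical} hypergraph that fails \eqref{cond*} and then apply Lemma \ref{lem32} to a suitably cheap hyperedge. Since $\Hy$ is $k$-uniform, condition \eqref{cond*} merely says that every hyperedge contains a vertex of degree $\le k$, so if $E=\varnothing$ or \eqref{cond*} holds the result is immediate from Theorem \ref{theo15}. Otherwise I would pass --- as in the proof of Theorem \ref{theo51} --- to a partial hypergraph $\Hy'=(V,E')$ with $\mathrm{q}(\Hy')=\mathrm{q}(\Hy)$ and $|E'|$ minimal; such a $\Hy'$ is critical, still $k$-uniform and linear, has at most $n\le k^2+k-2$ vertices, and satisfies $\Delta([\Hy']_2)\le\Delta([\Hy]_2)$, so it is enough to prove the bound for $\Hy'$. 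If $\Hy'$ satisfies \eqref{cond*} we are done by Theorem \ref{theo15}; otherwise we may assume $\Hy$ itself is critical, $k$-uniform, linear, has $n\le k^2+k-2$ vertices, and some $e_0\in E$ has $\mathrm{deg}_{\Hy}(x)\ge k+1$ for every $x\in e_0$.

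Next I would extract the structure forced by the failure of \eqref{cond*}. By linearity the hyperedges through any vertex $x$ are pairwise disjoint off $x$, whence $\mathrm{deg}_{\Hy}(x)\le\frac{n-1}{k-1}\le\frac{k^2+k-3}{k-1}<k+2$; hence every $x\in e_0$ has degree exactly $k+1$. Then \eqref{eqDH2} gives $\mathrm{deg}_{[\Hy]_2}(x)=(k+1)(k-1)=k^2-1$ for such $x$, which forces $n\ge k^2$. So $k^2\le n\le k^2+k-2$ and $\Delta([\Hy]_2)\ge k^2-1$, and it suffices to show $\mathrm{q}(\Hy)\le k^2$.

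The last step is a dichotomy. If some hyperedge $e^*$ contains a vertex $y$ with $\mathrm{deg}_{\Hy}(y)\le k$, then bounding the other $k-1$ vertices of $e^*$ by $k+1$ and using \eqref{eqidenhyp2} gives
\[
\mathrm{d}_{\Hy}(e^*)=\sum_{x\in e^*}\bigl(\mathrm{deg}_{\Hy}(x)-1\bigr)\le(k-1)+(k-1)k=k^2-1,
\]
and since $e^*$ is critical, Lemma \ref{lem32} yields $\mathrm{q}(\Hy)\le k^2$. Otherwise every non-isolated vertex of $\Hy$ has degree $\ge k+1$, hence exactly $k+1$ by the inequality above; let $V_0$ be the set of these vertices, so that $(V_0,E)$ is $k$-uniform, linear and $(k+1)$-regular. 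Each vertex of $V_0$ lies in $k+1$ hyperedges meeting pairwise only at it and hence covering $k^2$ vertices, so $|V_0|\ge k^2$; with the identity $(k+1)|V_0|=k|E|$ coming from \eqref{eqiden}, which forces $k\mid|V_0|$, we get $|V_0|=k^2$ and $|E|=k^2+k$. Thus $(V_0,E)$ is a finite affine plane of order $k$ --- the remaining incidence axioms of Section \ref{S2} hold because the $k+1$ hyperedges through any vertex partition the other $k^2-1$ vertices --- so $\mathrm{q}(\Hy)=k+1\le k^2$; this also contradicts the criticality of $\Hy$, affine planes of order $k\ge2$ having no critical hyperedge.

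The main obstacle is removing the final ``$+1$'': the crude line-graph bound $\mathrm{q}(\Hy)\le1+\max_{e\in E}\mathrm{d}_{\Hy}(e)$ only gives $\mathrm{q}(\Hy)\le k^2+1$, since a hyperedge all of whose vertices have degree $k+1$ --- for instance $e_0$ --- has $\mathrm{d}_{\Hy}(e_0)=k^2$. Getting past this is exactly what forces both the reduction to a critical hypergraph, so that Lemma \ref{lem32} may be applied to a cheaper hyperedge than $e_0$, and the separate treatment of the extremal $(k+1)$-regular case via the affine-plane identification.
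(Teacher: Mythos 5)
Your proof is correct and follows essentially the same route as the paper's: reduce to a critical hypergraph, split according to whether some hyperedge contains a vertex of degree at most $k$ (that case handled via Lemma \ref{lem32} and the bound $\mathrm{deg}_\Hy(x)\le\frac{n-1}{k-1}<k+2$), and identify the remaining $(k+1)$-regular case as a finite affine plane of order $k$ with $\mathrm{q}=k+1$. The only differences are cosmetic: the paper first invokes Theorem \ref{theo16} to assume $n\ge k^2$ and forces $n=k^2$ from the integrality of $|E|=(k+1)n/k$, whereas you derive $n\ge k^2$ from the hyperedge violating \eqref{cond*} (using Theorem \ref{theo15} when \eqref{cond*} holds) and handle isolated vertices explicitly via $V_0$ --- a harmless, indeed slightly more careful, variation.
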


\begin{proof}
By Theorem \ref{theo16} we may assume that $n\ge k^2$. We set $u=n-k^2$. Hence
\begin{equation}\label{equ=}
0\le u\le k-2.
\end{equation}
Since any partial hypergraph of $\Hy$ satisfies theorem's hypotheses, we may assume that $\Hy$ is critical. 
 We distinguish two cases

\begin{itemize}
\item We first assume that there exists $e_0\in E$ and $x_0\in e_0$ such that 
$\mathrm{deg}_{\Hy}(x_0)\le |e_0|=k$. By Lemma \ref{lem32} and \eqref{eqidenhyp2} we get
\begin{align*}
\mathrm{q}(\Hy)-1\le \mathrm{d}_\Hy (e_0)&=\sum_{x\in e_0}(\mathrm{deg}_{\Hy}(x)-1)\\
&=\sum_{x\in e_0\smallsetminus\{x_0\}}\mathrm{deg}_{\Hy}(x)-|e_0|+\mathrm{deg}_{\Hy}(x_0)\\
&\le (|e_0|-1)\Delta(\Hy)= (k-1)\Delta(\Hy)= 
\Delta([\Hy]_2).
\end{align*}

\item We now assume that $\forall x\in V$ $\mathrm{deg}_{\Hy}(x)\ge k+1$. By \eqref{eqdex} we get
$$
\forall x\in V\quad \mathrm{deg}_{\Hy}(x)\le \frac{k^2+k-3}{k-1}<k+2.
$$
hence $\Hy$ is $(k+1)$-regular. %Thus $\Delta([\Hy]_2)=(k-1)(k+1)=k^2-1$.
By \eqref{eqiden} we thus infer
$$
|E|=\frac{(k+1)n}{k}=k^2+k+u+\frac uk,
$$
implying $u=0$ by \eqref{equ=} since $|E|$ is an integer. Whence
$|V|=k^2$ and $|E|=k^2+k$. Our aim is to show that $\Hy$ is a finite affine plane yielding $\mathrm{q}(\Hy)=k+1$.

For any $x\in V$ we have $\mathrm{deg}_{[\Hy]_2}(x)=\sum_{e\in\Hy (x)}(|e|-1)=k^2-1=|V|-1$ hence any pair of distinct vertices $(x,y)$ in $\Hy$ are adjacent. Denote by $e_{x,y}$ the unique hyperedge containing both $x$ and $y$.

Let $e\in E$. Then by \eqref{eqidenhyp2} we have $\mathrm{d}_\Hy (e)=\sum_{x\in e}(\mathrm{deg}_{\Hy}(x)-1)=k^2$.  Since $|E|=k^2+k$ and 
$\mathrm{d}_\Hy (e)=k^2$, there are $k-1$ many hyperedges $a_1,\dots,a_{k-1}$ such that
$a_i\cap e=\varnothing$, $1\le i\le k-1$. If there were $y\in a_i\cap a_j$ with $i\ne j$,  then
$\Hy (y)$ would contain $k+2$ distinct hyperedges: $a_i,a_j$, and $e_{x,y}$, $x\in e$, a contradiction to $|\Hy (y)|=\mathrm{deg}_{\Hy}(y)=k+1$. Hence $a_i\cap a_j=\varnothing$ if $i\ne j$. This gives
$$
\left|e\cup \bigcup_{i=1}^{k-1}a_i\right|=|e|+\sum_{i=1}^{k-1}|a_i|=k^2=|V|,
$$
thus any $x\in V\smallsetminus e$ belongs to exactly one of the hyperedges $a_i$, $1\le i\le k-1$, that is a parallel to $e$. We conclude that $\Hy$ is a finite affine plane and that $\mathrm{q}(\Hy)=k+1$.\qedhere
\end{itemize}
\end{proof}

\begin{comment}
Considering the fact that $\Delta([\Hy]_2)\le |V|-1$ for any hypergraph $\Hy =(V,E)$ the following statement is a mild improvement on Theorem~\ref{theo16} when
$\Delta([\Hy]_2)+1$ is not a square of integer.

\begin{thm}\label{theo53}
Let $\Hy =(V,E)$ be a linear hypergraph such that
\begin{equation}\label{eq24}
k:=\mathrm{ar}(\Hy)>\sqrt{\Delta([\Hy]_2)+1}.
\end{equation}
Then $\mathrm{q}(\Hy)\le \Delta([\Hy]_2)+1$. 
\end{thm}

\begin{proof}
Since $\mathrm{ar}(\Hy')\ge\mathrm{ar} (\Hy)$ and $\Delta([\Hy']_2)\le \Delta([\Hy]_2)$ for any 
partial hypergraphs $\Hy'$ of $\Hy$ we may assume that
$\Hy$ is critical. Let $e\in E$ of cardinality $k$. Then by Lemma~\ref{lem32} and \eqref{eqidenhyp2} we get
\begin{equation}\label{eq25}
\mathrm{q}(\Hy)-1=\mathrm{q}(\Hy \smallsetminus e)\le \mathrm{d}_\Hy (e)
\le k(\Delta(\Hy)-1)=(k-1)\Delta(\Hy)-(k-\Delta(\Hy)).
\end{equation}
By \eqref{eqdelta} and \eqref{eq24} we get $\Delta(\Hy)< \frac{k^2-1}{k-1}$, hence $\Delta(\Hy)\le k$. We infer from \eqref{eq25} the desired bound 
$\mathrm{q}(\Hy)-1\le (k-1)\Delta(\Hy)\le \Delta([\Hy]_2)$
by \eqref{eqdelta} again.
\end{proof}
\end{comment}

\subsection{A generalization of Theorem \ref{theo16}}
Let $\Hy =(V,E)$ be a hypergraph with $n$ vertices. Then we can split $E$ as the disjoint union $E=E'\sqcup E''_{-}\sqcup
E''_{+}$ where
\begin{align*}
E'&=\{e\in E: |e|\ge \sqrt{n}\},
\\
E''_{-}&=\{e\in E\smallsetminus E':
\exists x_0\in e\text{ such that }\mathrm{deg}_{\Hy}(x_0)\le |e|\},\\
 E''_{+}&=\{e\in E\smallsetminus E':
\forall x\in e, \ \mathrm{deg}_{\Hy}(x) > |e|\}.
\end{align*}
Theorems \ref{theo15} and \ref{theo16} can be combined in order to obtain the 
following slight extension of both results. It shows that the bound \eqref{eqqH} is true
when $E''_{+}=\varnothing$.

\begin{thm} \label{theo54}
Let $\Hy =(V,E)$ be a linear hypergraph with $n$ vertices and without loop such that
$E$ can be partitioned as $E=E'\sqcup E''$ in such a way that the partial hypergraphs $\mathcal{H'}=(V, E')$ and $\mathcal{H''}=(V, E'')$ satisfy the following properties:
\begin{enumerate}
\item  $\mathrm{ar}(\mathcal{H'})\geq \sqrt{n}$,

\item $\forall e\in  E''$, $\vert e\vert< \sqrt{n}$,

\item  $\forall e\in E''$, $\exists x_0\in e$ such that $\mathrm{deg}_{\Hy}(x_0)\leq \vert e\vert$.

\end{enumerate}
%Assume that both $E'\neq \emptyset$ and  $E''\neq \emptyset$; 
Then
$\mathrm{q}(\Hy)\le \Delta([\Hy]_2)+1.$
\end{thm}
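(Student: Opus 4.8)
The plan is to fuse the two inductions behind Theorems~\ref{theo15} and~\ref{theo16}. Concretely, I would argue by induction on $|E''|$: peel off the hyperedges of $E''$ one at a time exactly as in the proof of Theorem~\ref{theo15}, and once $E''$ has been exhausted invoke Theorem~\ref{theo16} on what remains. The base case is $E''=\varnothing$: then $\Hy=(V,E')=\mathcal{H}'$ has $\mathrm{ar}(\Hy)\ge\sqrt n=\sqrt{|V|}$ by hypothesis~(i), so Theorem~\ref{theo16} gives $\mathrm{q}(\Hy)\le\Delta([\Hy]_2)+1$ directly.

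For the inductive step assume $E''\ne\varnothing$ and set $k=\mathrm{ar}((V,E''))$. The one point that really needs care is the interplay of the threshold $\sqrt n$ with hypotheses~(i)--(ii): since every $e\in E''$ has $|e|<\sqrt n$ we get $k<\sqrt n$, while every $e\in E'$ has $|e|\ge\sqrt n>k$ and $\mathrm{ar}(\mathcal{H}')\ge\sqrt n$; hence $\mathrm{ar}(\Hy)=k$, i.e.\ a smallest hyperedge of $E''$ is a smallest hyperedge of $\Hy$. I would then choose $e_0\in E''$ with $|e_0|=k$, pick (by~(iii)) a vertex $x_0\in e_0$ with $\mathrm{deg}_{\Hy}(x_0)\le|e_0|=k$, and put $\Hy_0=\Hy\smallsetminus e_0=(V,\,E'\sqcup(E''\smallsetminus\{e_0\}))$. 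This new partition still satisfies (i)--(iii): $E'$ is untouched, (ii) is trivial, and for (iii) any vertex of $e\in E''\smallsetminus\{e_0\}$ witnessing~(iii) for $\Hy$ still witnesses it for $\Hy_0$ because $\mathrm{deg}_{\Hy_0}\le\mathrm{deg}_{\Hy}$. Since $|E''\smallsetminus\{e_0\}|<|E''|$, the induction hypothesis yields $\mathrm{q}(\Hy_0)\le\Delta([\Hy_0]_2)+1\le\Delta([\Hy]_2)+1$, the last inequality holding because $[\Hy_0]_2$ is a subgraph of $[\Hy]_2$ on the same vertex set.

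To finish I would compare $\mathrm{q}(\Hy)$ with $\mathrm{q}(\Hy_0)$. Deleting one hyperedge changes the chromatic index by at most $1$, so either $\mathrm{q}(\Hy_0)=\mathrm{q}(\Hy)$, in which case the displayed bound on $\mathrm{q}(\Hy_0)$ already settles it, or $\mathrm{q}(\Hy_0)=\mathrm{q}(\Hy)-1$, i.e.\ $e_0$ is critical. In the critical case, Lemma~\ref{lem32} together with the splitting of $\mathrm{d}_\Hy(e_0)$ from the proof of Theorem~\ref{theo15} gives
\begin{align*}
\mathrm{q}(\Hy)-1\le\mathrm{d}_\Hy(e_0)
&=\sum_{\substack{x\in e_0\\ x\ne x_0}}\bigl(\mathrm{deg}_{\Hy}(x)-1\bigr)+\bigl(\mathrm{deg}_{\Hy}(x_0)-1\bigr)\\
&\le(|e_0|-1)\bigl(\Delta(\Hy)-1\bigr)+(|e_0|-1)=(k-1)\Delta(\Hy)\le\Delta([\Hy]_2),
\end{align*}
where the last inequality is~\eqref{eqdelta} and uses precisely that $k=|e_0|=\mathrm{ar}(\Hy)$. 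This closes the induction.

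I do not expect a genuine obstacle here: the only substantive observation is that the cut-off $\sqrt n$ separating $E'$ from $E''$ makes a minimal hyperedge of $E''$ minimal in $\Hy$, which is exactly what~\eqref{eqdelta} needs at the end; everything else is the hereditary bookkeeping already present in the proofs of Theorems~\ref{theo15} and~\ref{theo16}. If anything delicate remains, it is merely checking carefully that hypotheses~(i)--(iii) survive the removal of a hyperedge of $E''$ — which, as noted, they do.
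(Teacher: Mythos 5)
Your proposal is correct and follows essentially the same route as the paper: induction on $|E''|$, with Theorem \ref{theo16} handling the base case $E''=\varnothing$, removal of a hyperedge $e_0\in E''$ of minimum size (which, thanks to the $\sqrt n$ threshold, satisfies $|e_0|=\mathrm{ar}(\Hy)$ — the same observation the paper makes), and Lemma \ref{lem32} plus the degree splitting and \eqref{eqdelta} in the critical case. The hereditary check of hypotheses (i)--(iii) after deleting $e_0$ matches the paper's argument, so nothing further is needed.
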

\begin{proof}
We argue by induction on $\vert  E''\vert $. It runs as in the proof of Theorem \ref{theo15}.
\\[0.2em]
When $E''=\varnothing$ then $\Hy =\Hy'$ and the result follows directly from Theorem \ref{theo16}.
\\[0.2em]
When $E''=\{e_0,e_1,\dots,e_m\}$ then
the partial hypergraph $$\Hy _0:=\Hy \smallsetminus e_0=(V,E'\sqcup(E''\smallsetminus\{e_0\}))$$ satisfies 
conditions i)-iii) and the induction hypothesis applies to $\Hy _0$. We even can assume that $|e_0|=\mathrm{ar}(\Hy)$ since $|e_i|<\sqrt{n}\le\mathrm{ar}(\Hy')$ for all $0\le i\le m$.
We distinguish two cases.
\begin{itemize}
\item If $\mathrm{q}(\Hy)=\mathrm{q}(\Hy _0)$, then by applying induction hypothesis
to $\Hy _0$ we infer 
$$
\mathrm{q}(\Hy)=\mathrm{q}(\Hy _0)\le \Delta([\Hy _0]_2)+1\le \Delta([\Hy]_2)+1.
$$
\item If $\mathrm{q}(\Hy)=\mathrm{q}(\Hy _0)+1$ then we fix $x_0\in e_0$ 
such that $\mathrm{deg}_{\Hy}(x_0)\le |e_0|$
and we get by Lemma \ref{lem32} 
\begin{align*}
\mathrm{q}(\Hy)&\le \mathrm{d}_\Hy (e_0)+1=\sum_{\substack{x\in e_0\\ x\ne x_0}}(\mathrm{deg}_{\Hy}(x)-1)
+\mathrm{deg}_{\Hy}(x_0)\\
&\le (|e_0|-1)\max_{x\in e}\mathrm{deg}_{\Hy}(x)-|e_0|+1+\mathrm{deg}_{\Hy}(x_0)\\
&\le \Delta([\Hy]_2)+1,
\end{align*}
since $|e_0|=\mathrm{ar}(\Hy)$.
\end{itemize}
This completes the induction.
\end{proof}

%\textcolor{red}{
\begin{rems}\label{rm55}
\begin{enumerate}
\item Before applying this theorem it should be convenient to remove all isolated vertices
and all but one vertices of degree one in each hyperedge 
so that the resulting hypergraph still verifies conditions i)-iii).
Indeed this will not change the chromatic index but may improve on
the upper bound.

\item
Let $\Hy =(V,E)$ and assume that $|E''_{+}|=r\ge1$.
The partial hypergraph $\tilde{\Hy}=(V,E\smallsetminus E''_{+})$ satisfies
Theorem \ref{theo54}. Letting $\Hy''_+=(V,E''_{+})$
we infer
\begin{equation}\label{eqremfin}
\mathrm{q}(\Hy)\le \mathrm{q}(\tilde{\Hy})+\mathrm{q}(\Hy''_+)\leq
\Delta([\tilde{\Hy}]_2)+1+r\leq
\Delta([\Hy]_2)+1+r.
\end{equation}
Note that $r$ can be very large, as large as the total number of hyperedges.
If $\Hy =(V,E)$ is a $k$-uniform and $(k+1)$-regular linear hypergraph with $k^2$ vertices,
then $|E|=k(k+1)$ by \eqref{eqiden}. Adding an isolated  vertex to $\Hy$
gives a hypergraph $\hat{\Hy}=(\hat{V},\hat{E})$ in which
$\hat{E}=\hat{E}''_{+}$.   In this case the bound \eqref{eqremfin} is trivial.

\item In fact the induction argument in the proof of Theorem \ref{theo54} 
applies as well on the set of hyperedges 
 $$
E_{-}:=\{e\in E\,:\, \exists x_0\in e \text{ such that }
\mathrm{deg}_{\Hy}(x_0)\le |e|\}.
$$
Hence if the partial hypergraph $(V,E\smallsetminus E_{-})$ satisfies Berge-F\"uredi bound
\eqref{eqqH}
then $(V,E)$ also does. It follows that Conjecture \ref{conj12} is true in general if it holds for every hypergraph $(V,E)$ such that $E_{-}$ is empty.
\end{enumerate}
\end{rems}

\subsection{Consequences for the {Erd\H{o}s-Faber-Lov\'asz} Conjecture}
We noticed earlier that  
Conjecture \ref{conj12} implies Conjecture \ref{conj14}, since $\Delta([\Hy]_{2})\leq |V|-1$ for any hypergraph $\Hy =(V,E)$. Therefore we have the following corollaries.
\begin{cor}\label{cor56}
Let $\Hy$ be a linear hypergraph with $n$ vertices and satisfying \eqref{cond*}.
Then 
$\mathrm{q}(\Hy)\le n$.
\end{cor}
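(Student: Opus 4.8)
The plan is to deduce Corollary~\ref{cor56} directly from Theorem~\ref{theo15} together with the elementary bound $\Delta([\Hy]_2)\le |V|-1$. First I would invoke Theorem~\ref{theo15}: since $\Hy$ is a linear hypergraph satisfying the hypothesis \eqref{cond*}, that theorem immediately yields $\mathrm{q}(\Hy)\le \Delta([\Hy]_2)+1$. It then only remains to control $\Delta([\Hy]_2)$ in terms of $n=|V|$.

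The key observation is that the $2$-section $[\Hy]_2$ is a simple graph on the $n$ vertices of $\Hy$, so every vertex $x$ has $\mathrm{deg}_{[\Hy]_2}(x)\le n-1$, the maximum possible degree in a simple graph on $n$ vertices. Hence $\Delta([\Hy]_2)\le n-1$. Combining this with the conclusion of Theorem~\ref{theo15} gives
\[
\mathrm{q}(\Hy)\le \Delta([\Hy]_2)+1\le (n-1)+1=n,
\]
which is exactly the assertion. One may alternatively derive $\Delta([\Hy]_2)\le n-1$ from the formula \eqref{eqDH2}, but the combinatorial reason is simply that a vertex can be adjacent in $[\Hy]_2$ only to the other $n-1$ vertices.

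There is essentially no obstacle here: the corollary is a routine specialization, and the only mild point to mention is that the hypothesis \eqref{cond*} in Corollary~\ref{cor56} is precisely the hypothesis of Theorem~\ref{theo15}, so no additional work (e.g.\ no reduction to a critical partial hypergraph, no case distinction on the antirank) is needed. I would therefore write the proof in two or three lines, citing Theorem~\ref{theo15} and the bound $\Delta([\Hy]_2)\le |V|-1$.
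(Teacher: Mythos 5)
Your proof is correct and follows exactly the paper's own reasoning: the paper derives Corollary~\ref{cor56} by combining Theorem~\ref{theo15} with the observation that $\Delta([\Hy]_{2})\le |V|-1$ for any hypergraph, which is precisely your two-line argument. No further comment is needed.
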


\begin{cor}\label{cor57}
Let $\Hy$ be a linear hypergraph with $n$ vertices and satisfying
$\mathrm{ar}(\Hy)\ge n^{1/2}$. 
Then 
$\mathrm{q}(\Hy)\le n$.
\end{cor}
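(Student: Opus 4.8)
The plan is to deduce Corollary~\ref{cor57} immediately from Theorem~\ref{theo16} together with the elementary universal bound $\Delta([\Hy]_2)\le n-1$, valid for \emph{every} hypergraph $\Hy=(V,E)$ with $|V|=n$. Indeed the $2$-section $[\Hy]_2$ is a simple graph on the vertex set $V$, so each vertex has at most $n-1$ neighbours in it; equivalently, by \eqref{eqDH2} the quantity $\mathrm{deg}_{[\Hy]_2}(x)=\sum_{e\ni x}(|e|-1)$ counts distinct vertices of $V\smallsetminus\{x\}$ (without repetition when $\Hy$ is linear), whence $\mathrm{deg}_{[\Hy]_2}(x)\le n-1$.

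The argument then consists of a single chain of inequalities. The hypothesis $\mathrm{ar}(\Hy)\ge n^{1/2}=|V|^{1/2}$ is exactly the hypothesis of Theorem~\ref{theo16}, so that theorem applies and gives $\mathrm{q}(\Hy)\le \Delta([\Hy]_2)+1$. Combining this with $\Delta([\Hy]_2)\le n-1$ yields $\mathrm{q}(\Hy)\le (n-1)+1=n$, which is the assertion. (This is precisely the implication ``Conjecture~\ref{conj12} $\Rightarrow$ Conjecture~\ref{conj14}'' recalled just before the corollary, specialized to the antirank regime covered by Theorem~\ref{theo16}.)

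There is essentially no obstacle here: all the substance is contained in Theorem~\ref{theo16}, and only a few degenerate cases need a word. If $E=\varnothing$ then $\mathrm{q}(\Hy)=0\le n$; and if $\Hy$ has a loop then $\mathrm{ar}(\Hy)=1$, so $1\ge n^{1/2}$ forces $n=1$ and hence $\mathrm{q}(\Hy)\le 1=n$ — thus, in the nontrivial range, the linearity together with $\mathrm{ar}(\Hy)\ge n^{1/2}$ already makes the looplessness automatic. One may finally observe that Corollary~\ref{cor56} follows in exactly the same manner from Theorem~\ref{theo15}, applied to any linear hypergraph satisfying \eqref{cond*}, again using $\Delta([\Hy]_2)\le n-1$.
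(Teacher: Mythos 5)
Your proof is correct and is essentially identical to the paper's: the corollary is derived there by combining Theorem~\ref{theo16} with the universal bound $\Delta([\Hy]_2)\le |V|-1$, exactly as you do. Your brief treatment of the degenerate cases (empty edge set, loops) is harmless extra care and does not change the argument.
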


\begin{cor}\label{cor58}
Let $\Hy =(V,E'\sqcup E'')$ be a linear hypergraph with $n$ vertices  and satisfying  hypotheses of Theorem \ref{theo54}. Then $\mathrm{q}(\Hy)\le n$.
\end{cor}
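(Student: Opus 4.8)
The plan is to derive Corollary~\ref{cor58} directly from Theorem~\ref{theo54} by the same observation that already yielded Corollaries~\ref{cor56} and~\ref{cor57}: for any hypergraph $\Hy =(V,E)$ with $|V|=n$, every vertex of $[\Hy]_2$ has at most $n-1$ neighbours, so $\Delta([\Hy]_2)\le n-1$, whence $\Delta([\Hy]_2)+1\le n$. So the argument is a one-line deduction.

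Concretely, first I would invoke the hypothesis: $\Hy =(V,E'\sqcup E'')$ is a linear hypergraph with $n$ vertices satisfying conditions i)--iii) of Theorem~\ref{theo54}. Applying that theorem gives $\mathrm{q}(\Hy)\le \Delta([\Hy]_2)+1$. Then I would note $\Delta([\Hy]_2)\le n-1$ and conclude $\mathrm{q}(\Hy)\le n$. There is essentially no obstacle here; the only thing to be careful about is simply citing Theorem~\ref{theo54} with its exact list of hypotheses rather than re-proving anything.

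I would write it as follows.

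\begin{proof}
Since $\Hy$ satisfies the hypotheses of Theorem~\ref{theo54}, we have $\mathrm{q}(\Hy)\le \Delta([\Hy]_2)+1$. On the other hand, for any vertex $x\in V$ the neighbours of $x$ in the $2$-section $[\Hy]_2$ all belong to $V\smallsetminus\{x\}$, so $\mathrm{deg}_{[\Hy]_2}(x)\le |V|-1=n-1$ and hence $\Delta([\Hy]_2)\le n-1$. Combining both bounds yields $\mathrm{q}(\Hy)\le n$.
\end{proof}
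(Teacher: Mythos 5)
Your proposal is correct and is exactly the paper's argument: the corollary follows by applying Theorem~\ref{theo54} and then using the trivial bound $\Delta([\Hy]_2)\le n-1$, which is precisely the observation the paper makes just before stating Corollaries~\ref{cor56}--\ref{cor58}. Nothing further is needed.
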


\end{document}